\documentclass[11pt]{amsart}
\usepackage{amsmath,amsfonts,amssymb,amscd,amsthm,bm}
\usepackage{mathtools}
\usepackage{mathrsfs}
\usepackage{graphicx}
\usepackage{booktabs}
\usepackage{multirow}
\usepackage{longtable}
\usepackage{tabularx}
\usepackage[square, comma, sort&compress, numbers]{natbib}
\usepackage{tikz}
\usepackage{float}
\usepackage{hyperref}
\usepackage[T1]{fontenc}
\usepackage[utf8]{inputenc}

\allowdisplaybreaks
\newcolumntype{Y}{>{\raggedleft\arraybackslash}X}

\newcommand{\cV}{\mathcal{V}}

\newcommand{\bz}{\mathbb{Z}}

\def\Id{\text{\rm Id\,}}

\def\ker{\text{\rm Ker\,}}

\newcommand{\eqdeg}[1]{#1\mbox{\rm -}\deg}

\newcommand{\amal}[5]{#1\prescript{#2}{}\times_{#3}^{#4}#5}
\newtheorem{theorem}{Theorem}[section]
\newtheorem{proposition}{Proposition}[section]
\newtheorem{lemma}{Lemma}[section]

\newtheorem{definition}{Definition}[section]

\newtheorem{remark}{Remark}[section]

\title{Local and Global Equivariant Bifurcation for Periodic Weyl and Riesz Fractional Equations}
\author{Shi Yu}
\address{Math Department, Odessa College, 201 W University Blvd, Odessa, TX 79764, USA. } 
\email{syu@odessa.edu}
\date{}

\begin{document}

\begin{abstract}
We investigate local and global bifurcation of periodic solutions for two classes of nonlinear fractional differential equations with symmetry. For the one-sided periodic Weyl equation, the nonzero temporal Fourier modes give rise to complex characteristic functions, leading naturally to a two-parameter bifurcation problem. The associated local invariant is expressed in terms of winding numbers and twisted equivariant degree. In contrast, the periodic Riesz equation is governed by real spectral quantities, and its local bifurcation invariant is obtained from the jump of the equivariant degree across isolated critical values. In both settings, the decomposition into spatial isotypical components and temporal Fourier modes determines the critical representations and the possible symmetries of bifurcating solutions. A nonvanishing local invariant yields the existence of nearby nontrivial periodic solutions, while the corresponding global bifurcation theorems describe the continuation of connected solution components away from the trivial branch.
\end{abstract}

\subjclass[2020]{Primary 34A08, 34C23; Secondary 47H11, 55M25.}

\keywords{Equivariant bifurcation, periodic solutions, fractional differential equations, Weyl fractional derivative, Riesz fractional operator, equivariant degree, winding number, spatio-temporal symmetry.}

\maketitle

\section{Introduction}
Fractional differential equations have become an important tool for describing systems with memory, hereditary effects, anomalous diffusion, and nonlocal interactions; see, for example, \cite{Podlubny1999,Diethelm2010}. They arise in
models from viscoelasticity, diffusion and transport processes, control theory, and other dynamical systems in which the present state depends on its history or on nonlocal interactions. Recent developments continue to emphasize periodic solutions, nonlocal operators, and bifurcation phenomena in fractional and pseudo-differential equations; see, for example, \cite{Sampedro2025,Charkaoui2025,Haacker2026}.

Periodic fractional differential equations provide a natural framework for studying oscillatory behavior in such nonlocal systems. For periodic functions, the fractional operators considered here are naturally described through Fourier series and their associated multipliers
\cite{Ferrari2018,RoncalStinga2014,Sampedro2025}. In particular, recent work has shown the usefulness of Liouville--Weyl-type formulations for exact periodic solutions of fractional-order systems \cite{Haacker2026}. The temporal Fourier modes \(e^{imt}\), \(m\in\bz\), diagonalize the operators considered here, so that their multipliers determine the spectral structure of the linearized equations as well as the temporal symmetries preserved by the problem. This makes the Fourier representation particularly suitable for studying periodic bifurcation in the presence of spatial or internal symmetries.

Equivariant degree provides a natural topological framework for detecting solutions of nonlinear problems with symmetry, since, unlike the ordinary degree, it retains information about the isotropy types of solutions. The general theory and its computational foundations were developed in \cite{AED,SURVEY,Rybicki1994,MarzantowiczPrieto2004} and have been applied to periodic solutions, symmetric Newtonian systems, reversible equations, functional differential equations, Hamiltonian systems, elliptic systems, and delay equations; see, for example, \cite{Dab1,SY1,SY2,Yu3,Golebiewska2018,GolebiewskaRybickiStefaniak2021,Duan2024}. Equivariant degree methods have also been used to study relative periodic solutions and symmetry patterns in vortex and molecular systems \cite{Garcia2019Vortex,GarciaBerezovik2019}. More recently, García-Azpeitia, Ghanem, and Krawcewicz established global bifurcation of nonstationary solutions in symmetric nonlinear wave equations, while Chen, Crane, and Hensley applied equivariant degree to global Hopf bifurcation in symmetric systems with distributed delays \cite{GarciaGhanemKrawcewicz2025,ChenCraneHensley2025}. Yu further extended equivariant degree techniques to random periodic solutions in second-order stochastic systems with symmetry \cite{Yu2026Random}. Related developments include equivariant spectral-flow methods for bifurcation under compact group actions \cite{Izydorek2025}.

The main novelty of this paper is to place these two periodic fractional problems within a common equivariant bifurcation framework while preserving their different spectral mechanisms. For the one-sided Weyl problem, we use winding numbers of complex characteristic functions together with twisted equivariant degree to detect local bifurcation and spatio-temporal symmetries. For the Riesz problem, the local invariant is obtained from the jump of the ordinary equivariant degree across an isolated real critical value. We further derive corresponding global bifurcation alternatives and illustrate the theory by symmetric examples, including numerical periodic solutions for the Weyl system.

Let \(V\) be a finite-dimensional orthogonal representation of a finite group \(\mathcal G\), and let \(x:\mathbb R\to V\) be \(2\pi\)-periodic. Throughout the paper, we assume
\[
0<q<1.
\]
We study two classes of fractional differential equations.

The first is the \emph{one-sided periodic Weyl--Caputo fractional differential equation}
\begin{equation}\label{eq:one-sided-weyl}
D_{W,+}^{q}x(t)=f\bigl(\alpha,\beta,x(t)\bigr),
\qquad
x(t+2\pi)=x(t),
\end{equation}
where \((\alpha,\beta)\in\mathbb R^2\). For a sufficiently regular \(2\pi\)-periodic function \(x\), the one-sided Weyl--Caputo derivative is defined by
\[
D_{W,+}^{q}x(t):=
\frac{1}{\Gamma(1-q)}\int_{-\infty}^{t}\frac{x'(s)}{(t-s)^q}\,ds
=\frac{1}{\Gamma(1-q)}\int_{0}^{\infty}\frac{x'(t-r)}{r^q}\,dr,
\]
where \(\Gamma(z)=\displaystyle\int_{0}^{\infty} r^{z-1}e^{-r}\,dr\) denotes the Euler gamma function for \(z>0\). We consider the following conditions:
\begin{itemize}
\item[(A1)] The mapping \(f:\mathbb{R}^2\times V\to V\) is \(\mathcal G\)-equivariant with respect to \(x\); that is,
\[
f(\alpha,\beta,\mathfrak g x)
=\mathfrak g f(\alpha,\beta,x),
\qquad \mathfrak g\in\mathcal G.
\]

\item[(A2)] The mapping \(f\) is of class \(C^2\) and is odd with respect to \(x\); namely,
\[
f(\alpha,\beta,-x)=-f(\alpha,\beta,x).
\]
\end{itemize}
The second problem is the \emph{periodic Riesz fractional differential
equation}
\[
(-\partial_t^2)^{q/2}x(t)=
g\bigl(\lambda,x(t)\bigr),
\qquad x(t+2\pi)=x(t),
\]
where \(\lambda\in\mathbb R\).
 We consider the following conditions:
\begin{itemize}
\item[(B1)] The mapping \(g:\mathbb{R}\times V\to V\) is \(\mathcal G\)-equivariant with respect to \(x\); that is,
\[
g(\lambda,\mathfrak g x)
=\mathfrak g g(\lambda,x),
\qquad \mathfrak g\in\mathcal G.
\]

\item[(B2)] The mapping \(g\) is of class \(C^2\) and is odd with respect to \(x\); namely,
\[
g(\lambda,-x)=-g(\lambda,x).
\]
\end{itemize}
Since the nonlinearities \(f\) and \(g\) are of class \(C^2\) and are odd with respect to the state variable, it follows that
\[
f(\alpha,\beta,0)=0,
\qquad
g(\lambda,0)=0.
\]
Therefore, \(x\equiv 0\) is a \(2\pi\)-periodic solution of both equations for all admissible parameter values. Hence, each problem possesses a trivial branch of \(2\pi\)-periodic solutions.

For both periodic fractional differential equations, we use a temporal Fourier-mode decomposition. Since the functions
\[
e^{imt},\quad m\in\mathbb{Z},
\]
form a natural orthogonal basis for the space of \(2\pi\)-periodic functions, a periodic solution can be represented as
\[
x(t)=\sum_{m\in\mathbb{Z}}x_m e^{imt},
\]
where \(x_m\in V_{\mathbb{C}}\) denotes the Fourier coefficient associated with the temporal frequency \(m\).

For a Fourier mode \(e^{imt}\), \(m\in\bz, m\neq 0\), by Abel regularization
\[
\int_{0}^{\infty}r^{-q}e^{-imr}\,dr:=
\lim_{\varepsilon\to0^+}\int_{0}^{\infty}r^{-q}e^{-(\varepsilon+im)r}\,dr,
\]
the operator will be 
\[
D_{W,+}^{q}e^{imt}
=
\frac{im\,e^{imt}}{\Gamma(1-q)}
\lim_{\varepsilon\to0^+}\int_{0}^{\infty}r^{-q}e^{-(\varepsilon+im)r}\,dr,
\]

Using
\[
\int_{0}^{\infty}r^{a-1}e^{-zr}\,dr
=\Gamma(a)z^{-a},\qquad\operatorname{Re}(a)>0,
\quad
\operatorname{Re}(z)>0,
\]
with \(a=1-q\) and \(z=\varepsilon+im\), we obtain
\[
\int_{0}^{\infty}
r^{-q}e^{-(\varepsilon+im)r}\,dr
=\Gamma(1-q)(\varepsilon+im)^{q-1}.
\]
Passing to the limit \(\varepsilon\to0^+\) yields
\[
D_{W,+}^{q}e^{imt}
=
(im)^q e^{imt}.
\]
Hence, if
\[
x(t)=\sum_{m\in\bz}\widehat{x}_m e^{imt},
\]
then
\[
D_{W,+}^{q}x(t)=\sum_{m\in\bz}
(im)^q\widehat{x}_m e^{imt}.
\]

For the periodic Riesz fractional operator, the Fourier representation is
\[
(-\partial_t^2)^{q/2}e^{imt}= |m|^q e^{imt}.
\]

The choice of the one-sided Weyl operator in
\eqref{eq:one-sided-weyl} is essential. Let \(D_{W,-}^{q}\) denote the
oppositely oriented one-sided Weyl--Caputo derivative, whose Fourier symbol is
\[
D_{W,-}^{q}e^{imt}
=
(-im)^q e^{imt}.
\]
Since
\[
(im)^q+(-im)^q
=
2\cos\left(\frac{\pi q}{2}\right)|m|^q,
\]
the symmetric Weyl operator satisfies
\[
D_{W,+}^{q}+D_{W,-}^{q}
=
2\cos\left(\frac{\pi q}{2}\right)
(-\partial_t^2)^{q/2}.
\]
Since \(0<q<1\), we have \(\cos\left(\frac{\pi q}{2}\right)>0\). Thus, the symmetric Weyl operator is a nonzero scalar multiple of the periodic Riesz operator. Consequently, the two operators have the same Fourier eigenspaces and the same linear spectral structure. Moreover, they preserve the full temporal \(O(2)\)-symmetry because they commute with time translations and time reversal. Taking also the \(\mathcal G\)-equivariance and the oddness of the nonlinearity into account, the natural symmetry group is
\[
G_R=\mathcal G\times\mathbb{Z}_2\times O(2).
\]
The action of \(G_R\) on the space of \(2\pi\)-periodic functions is defined by
\[
\bigl((\mathfrak g,\varepsilon,e^{i\theta})\cdot x\bigr)(t)=\varepsilon\,\mathfrak g x(t+\theta), \qquad
\mathfrak g\in\mathcal G,\quad
\varepsilon\in\bz_2,\quad
\theta\in[0,2\pi),
\]
for rotations \(e^{i\theta}\in SO(2)\subset O(2)\), and by
\[
\bigl((\mathfrak g,\varepsilon,\tau e^{i\theta})\cdot x\bigr)(t)=\varepsilon\,\mathfrak g x(-t+\theta), \qquad
\mathfrak g\in\mathcal G,\quad
\varepsilon\in\bz_2,\quad
\theta\in[0,2\pi).
\]
for reflections \(\tau e^{i\theta}\in O(2)\setminus SO(2)\). Here, \(\mathcal G\) acts on the spatial or internal state variable, \(\mathbb{Z}_2\) acts by sign reversal through \(x\mapsto -x\), the rotation subgroup \(SO(2)\cong S^1\) acts by time translation, and the reflection component of \(O(2)\) acts by time reversal.

The one-sided Weyl operator has a fundamentally different spectral structure. Its Fourier multipliers \((im)^q\) are generally complex for nonzero temporal modes, and therefore the operator is, in general, non-self-adjoint. Although it commutes with time translations, it does not generally commute with time reversal. Taking into account the \(\mathcal G\)-equivariance and the oddness assumption, the natural symmetry group of \eqref{eq:one-sided-weyl} is
\[
G_W=\mathcal G\times\bz_2\times S^1,
\]
where \(\bz_2=\{1,-1\}\) acts by sign reversal and \(S^1\) acts by temporal translation. More precisely, the action of \(G_W\) on the space of \(2\pi\)-periodic functions is defined by
\[
\bigl((\mathfrak g,\varepsilon,e^{i\theta})\cdot x\bigr)(t)=
\varepsilon\,\mathfrak g x(t+\theta),
\qquad
\mathfrak g\in\mathcal G,\quad
\varepsilon\in\bz_2,\quad
\theta\in[0,2\pi).
\]
Here, \(\mathcal G\) acts on the state variable, \(\bz_2\) represents the odd symmetry \(x\mapsto -x\), and \(S^1\) represents invariance under shifts of the temporal variable.

The use of two parameters in \eqref{eq:one-sided-weyl} reflects the complex structure of the critical Fourier blocks. A singularity of a complex eigenvalue map requires the simultaneous vanishing of its real and imaginary parts. Therefore, a single real parameter is generically insufficient to produce an isolated critical point associated with a nonzero temporal mode. Under an appropriate nondegeneracy condition, the parameters \(\alpha\) and \(\beta\) provide independent variations of these two real components and allow isolated critical parameter pairs to occur. The resulting local
bifurcation invariant is expressed through the twisted \(G_W\)-equivariant degree and the winding numbers of the critical complex eigenvalue maps.

The main purpose of this paper is to develop a unified, symmetry-sensitive analysis of these two periodic fractional problems while preserving the essential distinction between their spectral structures. We construct appropriate functional-analytic settings, reformulate both equations as equivariant compact perturbations of the identity, and decompose their linearizations into spatial \(\mathcal G\)-isotypical components and temporal Fourier modes. This decomposition identifies the critical representations from which nontrivial periodic solutions may bifurcate.

For the one-sided Weyl equation, the critical blocks are generally complex and the temporal symmetry is \(S^1\). We introduce a two-parameter twisted equivariant bifurcation invariant whose coefficients combine winding numbers with the basic degrees of the critical \(\mathcal G\times\bz_2\times S^1\)-representations. A nonzero coefficient detects nontrivial periodic solutions with the corresponding spatial or spatio-temporal symmetry. The global continuation of the bifurcating branches is then studied directly in the two-dimensional parameter space by means of a twisted equivariant Rabinowitz alternative.

For the Riesz equation, the critical Fourier blocks are real and the temporal symmetry is \(O(2)\). We compute the local invariant as the jump of the ordinary equivariant degree across an isolated critical parameter. The Burnside-ring components of this invariant identify possible isotropy types of bifurcating periodic solutions. A corresponding global bifurcation theorem provides the alternative that the connected component of nontrivial solutions is either unbounded or returns to the trivial branch at another critical parameter.

%----------------------------------------------------
\section{Functional Space Setting and Symmetry}
%----------------------------------------------------

Since every \(x\in L^2(S^1;V)\) admits a Fourier expansion
\begin{equation}\label{eq:fourier-expansion}
x(t)=\sum_{m\in\bz}x_m e^{imt},
\qquad
x_m\in V_{\mathbb C},
\end{equation}
where
\[
V_{\mathbb C}:=\mathbb C\otimes_{\mathbb R}V
\]
is the complexification of \(V\). A Fourier series of the form~\eqref{eq:fourier-expansion} represents a real-valued function if and only if
\[
x_{-m}=\overline{x_m},
\qquad
m\in\bz.
\]

For \(s\geq0\), we define the periodic Sobolev space
\[
H_{\mathrm{per}}^s(S^1;V)
:=
\left\{
x\in L^2(S^1;V):
\sum_{m\in\bz}(1+|m|^2)^s|x_m|^2<\infty
\right\},
\]
equipped with the inner product
\[
\langle x,y\rangle_{H_{\mathrm{per}}^s}=\sum_{m\in\bz}
(1+|m|^2)^s
\langle x_m,y_m\rangle_{V_{\mathbb C}}.
\]

and the norm
\[
\|x\|_{H_{\mathrm{per}}^s}^2=
\sum_{m\in\bz}(1+|m|^2)^s|x_m|^2.
\]

We take
\[
X:=H_{\mathrm{per}}^q(S^1;V),
\qquad
Y:=L^2(S^1;V).
\]
\begin{proposition}\label{prop:compact-embedding}
Let \(q>0\), and set
\[
X:=H_{\mathrm{per}}^q(S^1;V),
\qquad
Y:=L^2(S^1;V).
\]
Then \(X\) and \(Y\) are real Hilbert spaces, and the embedding
\[
j:X\hookrightarrow Y
\]
is continuous and compact.
\end{proposition}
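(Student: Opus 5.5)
The plan is to identify both spaces with weighted sequence spaces through the Fourier map and then obtain compactness as the norm limit of finite-rank truncations.

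\emph{Hilbert structure.} Fix a real orthonormal basis of \(V\), which is also a complex orthonormal basis of \(V_{\mathbb C}\). By Parseval, \(x\mapsto (x_m)_{m\in\bz}\) is an isometric isomorphism of \(Y=L^2(S^1;V)\) onto the closed real subspace
\[
\ell^2_{\mathrm{r}}:=\Bigl\{(x_m)\in \ell^2(\bz;V_{\mathbb C}):\ x_{-m}=\overline{x_m}\Bigr\}.
\]
The same map carries \(X\) onto the corresponding real subspace of the weighted space \(\ell^2_s\), with weight \((1+|m|^2)^s\) and \(s=q\). Since weighted \(\ell^2\) spaces are complete and the reality condition is closed under coordinatewise convergence, both images are Hilbert spaces. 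The inner product \(\sum_m (1+|m|^2)^s\langle x_m,y_m\rangle\) is real-valued on real functions, because the terms for \(m\) and \(-m\) are complex conjugates. So \(X\) and \(Y\) are real Hilbert spaces.

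\emph{Continuity.} Since \((1+|m|^2)^q\ge 1\), we have \(\|x\|_Y\le\|x\|_X\), so \(j\) is a bounded injection of norm at most \(1\).

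\emph{Compactness.} For \(N\in\mathbb N\), let \(P_N x:=\sum_{|m|\le N}x_m e^{imt}\). Then \(j\circ P_N\) has finite rank, since its image lies in the span of finitely many modes tensored with \(V\), a finite-dimensional space. For \(x\in X\),
\[
\|(j-jP_N)x\|_Y^2=\sum_{|m|>N}|x_m|^2\le (1+N^2)^{-q}\sum_{|m|>N}(1+|m|^2)^q|x_m|^2\le (1+N^2)^{-q}\|x\|_X^2 .
\]
Hence \(\|j-jP_N\|\le (1+N^2)^{-q/2}\to0\), because \(q>0\). This is the only place where \(q>0\) is used. As a norm limit of finite-rank operators, \(j\) is compact.

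Nothing here is a genuine obstacle. The only point needing care is checking that truncation and the inner product respect the reality condition \(x_{-m}=\overline{x_m}\). Truncating symmetrically in \(|m|\le N\) keeps the truncated function real, so that \(P_N\) maps \(X\) into real finite-dimensional subspaces.
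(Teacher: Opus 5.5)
Your proof is correct and follows essentially the same route as the paper: the same Fourier identification with real (weighted) sequence spaces, the same weight bound $(1+|m|^2)^q\ge 1$ for continuity, and the same tail estimate for the symmetric truncation $P_N$. The only difference is how compactness is concluded. You use the operator-norm estimate $\|j-jP_N\|\le(1+N^2)^{-q/2}\to 0$ together with the fact that norm limits of finite-rank operators are compact. The paper instead combines finite-dimensionality of $P_NX$ with a diagonal subsequence argument. Your version is slightly cleaner and gives an explicit rate.
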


\begin{proof}
By Parseval's identity, \(Y\) is isometrically identified with the closed real
subspace
\[
\left\{
(x_m)_{m\in\bz}\in\ell^2(\bz ;V_{\mathbb C}) :
x_{-m}=\overline{x_m}
\right\}.
\]
Likewise, \(X\) is identified with the closed real subspace of the weighted
sequence space
\[
\left\{
(x_m)_{m\in\bz} :
\sum_{m\in\bz}(1+|m|^2)^q|x_m|^2<\infty,
\quad
x_{-m}=\overline{x_m}
\right\}.
\]
Hence both \(X\) and \(Y\) are real Hilbert spaces.

Moreover,
\[
\|x\|_Y^2
=
\sum_{m\in\bz}|x_m|^2
\leq
\sum_{m\in\bz}(1+|m|^2)^q|x_m|^2
=
\|x\|_X^2,
\]
so the embedding \(X\hookrightarrow Y\) is continuous.

To prove compactness, let \((x^{(n)})\) be bounded in \(X\), say
\[
\|x^{(n)}\|_X\leq C.
\]
For \(N\in\mathbb N\), let
\[
P_Nx:=\sum_{|m|\leq N}x_m e^{imt}.
\]
Since \(P_NX\) is finite-dimensional, the sequence
\((P_Nx^{(n)})\) has a convergent subsequence in \(Y\). In addition,
\[
\begin{aligned}
\|(I-P_N)x^{(n)}\|_Y^2
&=
\sum_{|m|>N}|x_m^{(n)}|^2\\
&\leq
(1+N^2)^{-q}
\sum_{|m|>N}(1+|m|^2)^q|x_m^{(n)}|^2\\
&\leq
C^2(1+N^2)^{-q}.
\end{aligned}
\]
The right-hand side tends to zero uniformly in \(n\). A diagonal subsequence
argument therefore yields a subsequence converging in \(Y\). Thus,
\(X\hookrightarrow Y\) is compact.
\end{proof}

The preceding Fourier-truncation argument also implies compact embeddings between periodic Sobolev spaces of different orders. Combining this observation with the Sobolev embedding into continuous functions gives the following result.

\begin{proposition}\label{prop:sobolev-compact-embedding}
Let \(V\) be a finite-dimensional real inner-product space and let
\[
\frac12<q<1.
\]
Then every element of \(H_{\mathrm{per}}^q(S^1;V)\) admits a continuous
\(2\pi\)-periodic representative, and the embedding
\[
H_{\mathrm{per}}^q(S^1;V)
\hookrightarrow
C_{\mathrm{per}}(S^1;V)
\]
is continuous and compact.
\end{proposition}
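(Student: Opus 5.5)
The plan is to work entirely on the Fourier side, as in the proof of Proposition~\ref{prop:compact-embedding}. First we show absolute summability of the Fourier coefficients. For \(x\in H_{\mathrm{per}}^q(S^1;V)\) with coefficients \(x_m\), the Cauchy--Schwarz inequality gives
\[
\sum_{m\in\bz}|x_m|
\le
\Bigl(\sum_{m\in\bz}(1+|m|^2)^{-q}\Bigr)^{1/2}
\Bigl(\sum_{m\in\bz}(1+|m|^2)^{q}|x_m|^2\Bigr)^{1/2}
=
C_q\,\|x\|_{H_{\mathrm{per}}^q}.
\]
The constant \(C_q\) is finite precisely because \(2q>1\); this is the only place the hypothesis \(q>\tfrac12\) is used, and I expect it to be the key point of the argument.

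Consequently the partial sums \(P_Nx=\sum_{|m|\le N}x_me^{imt}\) converge uniformly on \(\mathbb R\) by the Weierstrass M-test. The limit \(\tilde x\) is therefore continuous and \(2\pi\)-periodic. Since \(P_Nx\to x\) in \(L^2\), \(\tilde x=x\) almost everywhere, so \(\tilde x\) is the continuous representative of \(x\). It is real-valued (\(V\)-valued) because \(x_{-m}=\overline{x_m}\), so each \(P_Nx\) is already real. The bound \(\|\tilde x\|_\infty\le\sum_m|x_m|\le C_q\|x\|_{H^q_{\mathrm{per}}}\) then shows that the embedding is well defined, linear and continuous.

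For compactness I will not need a diagonal argument. Instead I show that the embedding \(\iota\) is the operator-norm limit of the finite-rank operators \(P_N\) regarded as maps into \(C_{\mathrm{per}}\). The same Cauchy--Schwarz estimate applied to the tail gives
\[
\|(I-P_N)x\|_\infty
\le
\sum_{|m|>N}|x_m|
\le
\Bigl(\sum_{|m|>N}(1+|m|^2)^{-q}\Bigr)^{1/2}\|x\|_{H^q_{\mathrm{per}}}.
\]
The prefactor is the tail of a convergent series and tends to \(0\) as \(N\to\infty\), uniformly in \(x\) in the unit ball. Hence \(\|\iota-P_N\|\to0\). A norm limit of finite-rank (hence compact) operators is compact, which proves compactness.

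Equivalently, one can repeat the truncation argument of Proposition~\ref{prop:compact-embedding} with the sup-norm tail estimate above in place of the \(L^2\) one: a bounded sequence has a subsequence along which every \(P_N\)-component converges, and the tails are uniformly small, so the subsequence is Cauchy in \(C_{\mathrm{per}}\).
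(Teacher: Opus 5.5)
Your proof is correct. The first half (Cauchy--Schwarz, absolute and uniform convergence of the Fourier series, the bound \(\|x\|_\infty\le C_q\|x\|_{H^q_{\mathrm{per}}}\)) is exactly what the paper does. The compactness step follows a genuinely different route.

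The paper picks an intermediate exponent \(\frac12<r<q\). It shows that \(H_{\mathrm{per}}^q\hookrightarrow H_{\mathrm{per}}^r\) is compact by the truncation argument of Proposition~\ref{prop:compact-embedding}; there the tail is controlled by \((1+N^2)^{-(q-r)}\), which is where the strict gap \(r<q\) is needed. It then uses that \(H_{\mathrm{per}}^r\hookrightarrow C_{\mathrm{per}}\) is continuous by the first half with \(r\) in place of \(q\), and concludes because a compact map followed by a bounded one is compact.

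You instead bound the sup norm of the tail directly, \(\|(I-P_N)x\|_\infty\le\bigl(\sum_{|m|>N}(1+|m|^2)^{-q}\bigr)^{1/2}\|x\|_{H^q_{\mathrm{per}}}\). From this you conclude that the embedding is an operator-norm limit of finite-rank maps.

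Your version is more self-contained. It needs no auxiliary exponent, and it shows that the summability of \(\sum(1+|m|^2)^{-q}\) that gives continuity already gives compactness. It also yields an explicit rate \(\|\iota-P_N\|=O(N^{1/2-q})\). The paper's factorization is more modular: it recombines two facts about the Sobolev scale (compact embeddings between orders, continuity into \(C_{\mathrm{per}}\)) in the standard ``trade a little regularity for compactness'' pattern, at the cost of introducing \(r\).
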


\begin{proof}
Let
\[
x(t)=\sum_{m\in\bz}x_m e^{imt}.
\]
By the Cauchy--Schwarz inequality,
\[
\sum_{m\in\bz}|x_m|
\leq
\left(
\sum_{m\in\bz}(1+|m|^2)^{-q}
\right)^{1/2}
\|x\|_{H_{\mathrm{per}}^q}.
\]
Since \(q>\frac12\), the series converges. Therefore, the Fourier series of \(x\) converges absolutely and uniformly, and hence defines a continuous \(2\pi\)-periodic representative. This also proves that the embedding into
\(C_{\mathrm{per}}(S^1;V)\) is continuous.

To prove compactness, choose \(r\) such that
\[
\frac12<r<q.
\]
By the same Fourier-truncation argument used in Proposition~\ref{prop:compact-embedding}, the embedding
\[
H_{\mathrm{per}}^q(S^1;V)
\hookrightarrow
H_{\mathrm{per}}^r(S^1;V)
\]
is compact. Since \(r>\frac12\), the embedding
\[
H_{\mathrm{per}}^r(S^1;V)
\hookrightarrow
C_{\mathrm{per}}(S^1;V)
\]
is continuous. The composition of these two embeddings is therefore compact.
\end{proof}
By Proposition~\ref{prop:sobolev-compact-embedding}, if \(\frac12<q<1\), then
\[
H_{\mathrm{per}}^q(S^1;V)
\hookrightarrow
C_{\mathrm{per}}(S^1;V)
\]
continuously and compactly. Hence every element of
\(H_{\mathrm{per}}^q(S^1;V)\) has a continuous \(2\pi\)-periodic representative, and the nonlinearities \(f\) and \(g\) may be evaluated pointwise.

The nonlinearities \(f\) and \(g\) induce the Nemytskii operators
\[
N_f:\mathbb R^2\times Y\longrightarrow Y,
\qquad
N_g:\mathbb R\times Y\longrightarrow Y,
\]
defined, for almost every \(t\in S^1\), by
\begin{equation}\label{eq:nemytskii-f-definition}
\bigl[N_f(\alpha,\beta,u)\bigr](t)
=
f\bigl(\alpha,\beta,u(t)\bigr),
\end{equation}
and
\begin{equation}\label{eq:nemytskii-g-definition}
\bigl[N_g(\lambda,u)\bigr](t)
=
g\bigl(\lambda,u(t)\bigr).
\end{equation}

For \(q>\frac12\), the continuity of \(f\) and \(g\) with respect to the state variable, together with the compactness of \(S^1\), implies that \(N_f(\alpha,\beta,x)\) and \(N_g(\lambda,x)\) are continuous periodic
functions. In particular, they belong to \(L^2(S^1;V)\).

When \(0<q\leq\frac12\), the embedding into
\(C_{\mathrm{per}}(S^1;V)\) is no longer available. Nevertheless,
\[
H_{\mathrm{per}}^q(S^1;V)
\hookrightarrow
L^2(S^1;V)
\]
continuously, and each element of \(H_{\mathrm{per}}^q(S^1;V)\) is defined almost everywhere. Thus, the Nemytskii operators may still be defined almost everywhere by \eqref{eq:nemytskii-f-definition} and
\eqref{eq:nemytskii-g-definition}, provided that the nonlinearities satisfy a suitable growth condition. More precisely, 
\begin{itemize}
    \item[(A3)] For every bounded set \(K\subset\mathbb{R}^2\), there exists a constant \(C_K>0\) such that
    \[
    \lvert f(\alpha,\beta,v)\rvert
    \leq
    C_K\bigl(1+\lvert v\rvert\bigr),
    \qquad
    (\alpha,\beta)\in K,\quad v\in V.
    \]

    \item[(B3)] For every bounded set \(J\subset\mathbb{R}\), there exists a constant \(C_J>0\) such that
    \[
    \lvert g(\lambda,v)\rvert
    \leq
    C_J\bigl(1+\lvert v\rvert\bigr),
    \qquad
    \lambda\in J,\quad v\in V.
    \]
\end{itemize}

\begin{proposition}\label{prop:nemytskii-L2-continuity}
 Assume that \(f\) is continuous and satisfies the condition (A3). Then the Nemytskii operator
\[
N_f:\mathbb R^2\times Y\longrightarrow Y,
\qquad
[N_f(\alpha,\beta,u)](t)
:=
f(\alpha,\beta,u(t)),
\]
defined for almost every \(t\in S^1\), is well defined and continuous. Moreover, it maps bounded subsets of \(\mathbb R^2\times Y\) into bounded
subsets of \(Y\).
\end{proposition}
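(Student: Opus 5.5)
The plan has three steps: well-definedness and boundedness from (A3), then continuity via subsequences and dominated convergence.

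\emph{Well-definedness and boundedness.} Let \(u\in Y\) and \((\alpha,\beta)\in\mathbb R^2\). Choose a representative of \(u\), which is measurable and defined almost everywhere. Since \(f\) is continuous, \(t\mapsto f(\alpha,\beta,u(t))\) is measurable, and changing \(u\) on a null set changes it only on a null set. Fix a bounded \(K\ni(\alpha,\beta)\). Then (A3) gives the pointwise bound
\[
|f(\alpha,\beta,u(t))|^2\le 2C_K^2\bigl(1+|u(t)|^2\bigr).
\]
Integrating over \(S^1\) yields
\[
\|N_f(\alpha,\beta,u)\|_Y^2\le 2C_K^2\bigl(2\pi+\|u\|_Y^2\bigr),
\]
with the normalization of the \(L^2\) norm absorbed into the constant. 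This shows \(N_f(\alpha,\beta,u)\in Y\), and also gives the bounded-to-bounded statement: a bounded subset of \(\mathbb R^2\times Y\) lies in \(K\times B_R\) for some bounded \(K\) and some \(R\), so its image lies in a ball of radius \(C_K\sqrt{2(2\pi+R^2)}\).

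\emph{Continuity.} I would use the standard subsequence argument. Suppose \((\alpha_n,\beta_n)\to(\alpha,\beta)\) and \(u_n\to u\) in \(Y\). It suffices to show that every subsequence has a further subsequence along which \(N_f(\alpha_n,\beta_n,u_n)\to N_f(\alpha,\beta,u)\) in \(Y\). Pass to a subsequence with \(u_n(t)\to u(t)\) almost everywhere and with \(\sum_n\|u_n-u\|_Y<\infty\). Then the function
\[
h(t):=|u(t)|+\sum_n|u_n(t)-u(t)|
\]
belongs to \(L^2\) by Minkowski's inequality and the monotone convergence theorem, and \(|u_n(t)|\le h(t)\) almost everywhere. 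This is the partial converse of dominated convergence.

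The parameters stay in a bounded set \(K\). By continuity of \(f\) jointly in \((\alpha,\beta,v)\), we get \(f(\alpha_n,\beta_n,u_n(t))\to f(\alpha,\beta,u(t))\) almost everywhere. Moreover,
\[
|f(\alpha_n,\beta_n,u_n(t))-f(\alpha,\beta,u(t))|^2\le 4C_K^2\bigl(1+h(t)\bigr)^2,
\]
and the right-hand side is integrable on the compact \(S^1\). Dominated convergence then gives convergence in \(Y\) along the sub-subsequence, hence along the full sequence. Since \(\mathbb R^2\times Y\) is a metric space, sequential continuity is enough.

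The only delicate step is producing the integrable dominating function \(h\) from mere \(L^2\)-convergence, which is why I pass to a rapidly convergent subsequence. Everything else is direct.
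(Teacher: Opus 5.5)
Your proof is correct and follows the paper's argument: the (A3) growth bound gives well-definedness and the bounded-to-bounded estimate, and continuity comes from a subsequence argument with almost-everywhere convergence. The one difference is the limit theorem. The paper invokes Vitali's theorem, using \(|u_n|^2\to|u|^2\) in \(L^1(S^1)\) to get uniform integrability of \(|f(\alpha_n,\beta_n,u_n)-f(\alpha,\beta,u)|^2\), whereas you pass to a rapidly convergent subsequence, build the dominating function \(h\in L^2\), and apply dominated convergence, which is equally valid and also makes the sub-subsequence conclusion explicit.
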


\begin{proof}
Let \(K\subset\mathbb R^2\) be bounded. For
\((\alpha,\beta)\in K\) and \(u\in Y\), the linear-growth condition gives
\[
|f(\alpha,\beta,u(t))|^2
\leq
2C_K^2\bigl(1+|u(t)|^2\bigr)
\]
for almost every \(t\in S^1\). Hence,
\[
f(\alpha,\beta,u(\cdot))\in L^2(S^1;V),
\]
so \(N_f\) is well defined. Moreover,
\[
\begin{aligned}
\|N_f(\alpha,\beta,u)\|_Y^2
&=
\int_{S^1}|f(\alpha,\beta,u(t))|^2\,dt\\
&\leq
2C_K^2
\bigl(2\pi+\|u\|_Y^2\bigr).
\end{aligned}
\]
Thus, \(N_f\) maps bounded subsets of
\(\mathbb R^2\times Y\) into bounded subsets of \(Y\).

Now let
\[
(\alpha_n,\beta_n,u_n)
\longrightarrow
(\alpha,\beta,u)
\qquad
\text{in }\mathbb R^2\times Y.
\]
Since \(u_n\to u\) in \(L^2(S^1;V)\), every subsequence contains a further
subsequence, still denoted by \((u_n)\), such that
\[
u_n(t)\longrightarrow u(t)
\qquad
\text{for almost every }t\in S^1.
\]
By the continuity of \(f\),
\[
f(\alpha_n,\beta_n,u_n(t))
\longrightarrow
f(\alpha,\beta,u(t))
\]
for almost every \(t\in S^1\).

The sequence \((\alpha_n,\beta_n)\), together with its limit, is contained in a bounded set \(K\subset\mathbb R^2\). Therefore,
\[
\begin{aligned}
&|f(\alpha_n,\beta_n,u_n(t))
-f(\alpha,\beta,u(t))|^2\\
&\qquad\leq
4C_K^2
\bigl(2+|u_n(t)|^2+|u(t)|^2\bigr).
\end{aligned}
\]
Since \(u_n\to u\) in \(L^2(S^1;V)\),
\[
|u_n|^2\longrightarrow |u|^2
\qquad
\text{in }L^1(S^1).
\]
Indeed,
\[
\begin{aligned}
\bigl\||u_n|^2-|u|^2\bigr\|_{L^1}
&\leq
\|u_n-u\|_{L^2}
\bigl(\|u_n\|_{L^2}+\|u\|_{L^2}\bigr)
\longrightarrow0.
\end{aligned}
\]
Hence the family
\[
\left\{
|f(\alpha_n,\beta_n,u_n)
-f(\alpha,\beta,u)|^2
\right\}
\]
is uniformly integrable. By Vitali's convergence theorem,
\[
\int_{S^1}
|f(\alpha_n,\beta_n,u_n(t))
-f(\alpha,\beta,u(t))|^2\,dt
\longrightarrow0.
\]
Therefore,
\[
N_f(\alpha_n,\beta_n,u_n)
\longrightarrow
N_f(\alpha,\beta,u)
\qquad
\text{in }Y.
\]
Thus, \(N_f\) is continuous.
\end{proof}

For \(N_g\), we have the same result. 

%----------------------------------------------------
\section{Fixed-Point Reformulation}
%----------------------------------------------------

In this section, we reformulate the Weyl and Riesz equations as equivariant compact perturbations of the identity. Since both fractional operators vanish on the constant Fourier mode, they are not invertible on the full periodic function space. We therefore introduce shifted operators whose Fourier multipliers are nonzero on every temporal mode.

Consider the one-sided periodic Weyl equation
\begin{equation}\label{eq:weyl-Aw}
D_{W,+}^q x=N_f(\alpha,\beta,x).
\end{equation}

Define the shifted Weyl operator
\[
\mathcal A_W:=D_{W,+}^q+\Id,
\qquad
\mathcal A_W:X\longrightarrow Y.
\]
If
\[
x(t)=\sum_{m\in\bz}x_m e^{imt},
\]
then
\[
\mathcal A_W x(t)=\sum_{m\in\bz}
\bigl(1+(im)^q\bigr)x_m e^{imt}.
\]
For \(0<q<1\), one has
\[
1+(im)^q\neq0,
\qquad
m\in\bz.
\]
Consequently, \(\mathcal A_W\) is an isomorphism from \(X\) onto \(Y\), and its inverse is given by
\[
\mathcal A_W^{-1}y(t)=
\sum_{m\in\bz}
\frac{y_m}{1+(im)^q}e^{imt}.
\]

Equation~\eqref{eq:weyl-Aw} is equivalent to
\[
\mathcal A_Wx=jx+N_f(\alpha,\beta,jx).
\]
It gives
\[
x=\mathcal A_W^{-1}
\bigl(jx+N_f(\alpha,\beta,jx)\bigr).
\]
We therefore define
\[
\Psi_W:\mathbb R^2\times X\longrightarrow X
\]
by
\[
\Psi_W(\alpha,\beta,x):=x-\mathcal A_W^{-1}
\bigl(jx+N_f(\alpha,\beta,jx)\bigr).
\]
Thus,
\[
\Psi_W(\alpha,\beta,x)=0
\]
if and only if \(x\) is a periodic solution of the one-sided Weyl equation.

\begin{proposition}\label{prop:weyl-compact-perturbation}
The mapping
\[
(\alpha,\beta,x)
\longmapsto
\mathcal A_W^{-1}
\bigl(jx+N_f(\alpha,\beta,jx)\bigr)
\]
is compact on bounded subsets of \(\mathbb R^2\times X\). Consequently,
\[
\Psi_W(\alpha,\beta,x)
=
x-\mathcal A_W^{-1}
\bigl(jx+N_f(\alpha,\beta,jx)\bigr)
\]
is a compact perturbation of the identity on \(X\).
\end{proposition}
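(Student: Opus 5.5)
The plan is to factor the map through the compact embedding of Proposition~\ref{prop:compact-embedding}, using the continuity of the Nemytskii operator and of \(\mathcal A_W^{-1}\). Write
\[
F(\alpha,\beta,x):=\mathcal A_W^{-1}\bigl(jx+N_f(\alpha,\beta,jx)\bigr)
=\mathcal A_W^{-1}\circ \Phi\circ(\Id_{\mathbb R^2}\times j)(\alpha,\beta,x),
\]
where
\[
\Phi:\mathbb R^2\times Y\to Y,\qquad \Phi(\alpha,\beta,u):=u+N_f(\alpha,\beta,u).
\]
By Proposition~\ref{prop:nemytskii-L2-continuity}, \(\Phi\) is continuous and maps bounded sets to bounded sets. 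Next, \(\mathcal A_W^{-1}:Y\to X\) is a bounded linear operator, with
\[
\|\mathcal A_W^{-1}y\|_X^2=\sum_m \frac{(1+m^2)^q}{|1+(im)^q|^2}|y_m|^2 .
\]
So it suffices to show that
\[
\sup_{m}\frac{(1+m^2)^{q/2}}{|1+(im)^q|}<\infty .
\]
Since
\[
|1+(im)^q|^2=1+2|m|^q\cos(\pi q/2)+|m|^{2q}\ge 1+|m|^{2q}
\]
(using \(\cos(\pi q/2)>0\)), and \((1+m^2)^{q}\le 2^q\max(1,|m|^{2q})\), the ratio is bounded by a constant depending only on \(q\). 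This also justifies the paper's claim that \(\mathcal A_W\) is an isomorphism \(X\to Y\).

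For compactness, let \(B\subset\mathbb R^2\times X\) be bounded and take a sequence \((\alpha_n,\beta_n,x_n)\in B\). The parameters lie in a bounded subset of \(\mathbb R^2\), so a subsequence satisfies \((\alpha_n,\beta_n)\to(\alpha,\beta)\). Since \((x_n)\) is bounded in \(X\), Proposition~\ref{prop:compact-embedding} gives a further subsequence with \(jx_n\to u\) in \(Y\). Continuity of \(\Phi\) then yields \(\Phi(\alpha_n,\beta_n,jx_n)\to\Phi(\alpha,\beta,u)\) in \(Y\), and boundedness of \(\mathcal A_W^{-1}\) gives convergence of \(F(\alpha_n,\beta_n,x_n)\) in \(X\). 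Hence \(F(B)\) is relatively compact.

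Continuity of \(F\) on \(\mathbb R^2\times X\) follows directly as the composition of continuous maps: \(j\) is continuous, \(\Phi\) is continuous, and \(\mathcal A_W^{-1}\) is continuous. Therefore \(F\) is a completely continuous map, and \(\Psi_W=\Id-F\) is a compact perturbation of the identity.

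The only point needing care is the uniform multiplier bound for \(\mathcal A_W^{-1}:Y\to X\), which rests on \(\operatorname{Re}(im)^q=|m|^q\cos(\pi q/2)\ge0\). I should also note that Proposition~\ref{prop:nemytskii-L2-continuity} requires (A3), so this argument assumes that growth condition (or \(q>\tfrac12\) together with the \(C_{\mathrm{per}}\) embedding). In the latter case the same argument goes through using Proposition~\ref{prop:sobolev-compact-embedding} in place of Proposition~\ref{prop:compact-embedding}, together with uniform continuity of \(f\) on compact sets.
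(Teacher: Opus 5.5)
Your proposal is correct and follows essentially the same route as the paper: extract a convergent subsequence of parameters, apply the compact embedding \(j:X\hookrightarrow Y\), use the \(L^2\)-continuity of \(N_f\) from Proposition~\ref{prop:nemytskii-L2-continuity}, and conclude with the boundedness of \(\mathcal A_W^{-1}:Y\to X\). Your explicit multiplier bound \(|1+(im)^q|^2\ge 1+|m|^{2q}\) and your note that (A3) is needed (or, for \(q>\tfrac12\), the \(C_{\mathrm{per}}\) route) supply details the paper only asserts or leaves implicit.
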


\begin{proof}
Let
\[
(\alpha_n,\beta_n,x_n)
\]
be a bounded sequence in \(\mathbb R^2\times X\). Since
\((\alpha_n,\beta_n)\) is bounded in the finite-dimensional space
\(\mathbb R^2\), after passing to a subsequence, there exists
\((\alpha,\beta)\in\mathbb R^2\) such that
\[
(\alpha_n,\beta_n)\longrightarrow(\alpha,\beta).
\]

Since the embedding
\[
j:X\hookrightarrow Y
\]
is compact, after passing to a further subsequence, there exists \(y\in Y\)
such that
\[
jx_n\longrightarrow y
\qquad\text{in }Y.
\]
By the continuity of the Nemytskii operator
\[
N_f:\mathbb R^2\times Y\longrightarrow Y,
\]
we obtain
\[
N_f(\alpha_n,\beta_n,jx_n)
\longrightarrow
N_f(\alpha,\beta,y)
\qquad\text{in }Y.
\]
Therefore,
\[
jx_n+N_f(\alpha_n,\beta_n,jx_n)
\longrightarrow
y+N_f(\alpha,\beta,y)
\qquad\text{in }Y.
\]

Since
\[
\mathcal A_W^{-1}:Y\longrightarrow X
\]
is bounded and hence continuous, it follows that
\[
\mathcal A_W^{-1}
\bigl(jx_n+N_f(\alpha_n,\beta_n,jx_n)\bigr)
\longrightarrow
\mathcal A_W^{-1}
\bigl(y+N_f(\alpha,\beta,y)\bigr)
\qquad\text{in }X.
\]
Thus, every bounded sequence in \(\mathbb R^2\times X\) has a subsequence
whose image under
\[
(\alpha,\beta,x)
\longmapsto
\mathcal A_W^{-1}
\bigl(jx+N_f(\alpha,\beta,jx)\bigr)
\]
converges in \(X\). Hence this mapping is compact on bounded subsets of
\(\mathbb R^2\times X\).

Consequently,
\[
\Psi_W(\alpha,\beta,x)
=
x-\mathcal A_W^{-1}
\bigl(jx+N_f(\alpha,\beta,jx)\bigr)
\]
is a compact perturbation of the identity on \(X\).
\end{proof}

\begin{proposition}\label{prop:weyl-equivariance}
Assume that \(\mathcal A_W\) and \(N_f\) are \(G_W\)-equivariant. Then
\(\mathcal A_W^{-1}\) is also \(G_W\)-equivariant, and consequently
\(\Psi_W\) is \(G_W\)-equivariant; that is,
\[
\Psi_W(\alpha,\beta,hx)
=
h\Psi_W(\alpha,\beta,x),
\qquad
h\in G_W.
\]
\end{proposition}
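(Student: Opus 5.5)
The argument is purely algebraic: invert the intertwining relation for \(\mathcal A_W\), then combine it with the linearity of the group action and the equivariance of the remaining pieces of \(\Psi_W\).

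\textbf{Step 1: equivariance of \(\mathcal A_W^{-1}\).} Each \(h\in G_W\) acts on \(X\) and on \(Y\) by bounded linear operators, through the formula \(\varepsilon\,\mathfrak g x(t+\theta)\). These operators commute with the embedding \(j\), since the action is defined pointwise by the same formula on both spaces. By the discussion preceding Proposition~\ref{prop:weyl-compact-perturbation}, \(\mathcal A_W:X\to Y\) is a linear isomorphism, and by hypothesis \(\mathcal A_W h=h\mathcal A_W\). Given \(y\in Y\), set \(x=\mathcal A_W^{-1}y\). Then
\[
\mathcal A_W(hx)=h\mathcal A_Wx=hy .
\]
Applying \(\mathcal A_W^{-1}\) gives \(hx=\mathcal A_W^{-1}(hy)\), that is, \(\mathcal A_W^{-1}h=h\mathcal A_W^{-1}\).

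As a consistency check I would also note why the hypotheses hold in this setting. Under the action, the Fourier coefficient \(x_m\) is sent to \(\varepsilon e^{im\theta}\mathfrak g x_m\), and the multiplier \(1+(im)^q\) is a scalar on each mode, so it commutes with this map. Equivariance of \(N_f\) follows from (A1) and (A2) together with the pointwise definition of \(N_f\).

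\textbf{Step 2: equivariance of \(\Psi_W\).} Using the linearity of \(h\), the relation \(jh=hj\), and the equivariance of \(N_f\), I compute
\[
\Psi_W(\alpha,\beta,hx)=hx-\mathcal A_W^{-1}\bigl(h\,jx+h\,N_f(\alpha,\beta,jx)\bigr).
\]
By Step 1 the second term equals \(h\,\mathcal A_W^{-1}\bigl(jx+N_f(\alpha,\beta,jx)\bigr)\). The whole expression is therefore \(h\Psi_W(\alpha,\beta,x)\).

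I expect no real obstacle here. The only point that needs care is checking that the \(G_W\)-actions on \(X\) and \(Y\) are compatible with \(j\) and preserve the Sobolev norm. Norm preservation holds because \(|e^{im\theta}|=1\) and \(\mathcal G\) acts orthogonally on \(V\), so each \(h\) maps \(X\) into \(X\).
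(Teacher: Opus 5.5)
Your proposal is correct and follows the same route as the paper. You get equivariance of \(\mathcal A_W^{-1}\) from that of the isomorphism \(\mathcal A_W\), and then run the same one-line computation for \(\Psi_W\) using \(jh=hj\) and the equivariance of \(N_f\). Your explicit derivation of \(\mathcal A_W^{-1}h=h\mathcal A_W^{-1}\), and your check that the action is compatible with \(j\) and preserves the Sobolev norm, fill in details the paper leaves implicit.
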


\begin{proof}
Let \(h\in G_W\). Since \(\mathcal A_W\) is an equivariant isomorphism, its
inverse satisfies
\[
\mathcal A_W^{-1}(hy)
=
h\mathcal A_W^{-1}(y),
\qquad
y\in Y.
\]
Using the equivariance of \(N_f\), we obtain
\[
\begin{aligned}
\Psi_W(\alpha,\beta,hx)
&=
hx-\mathcal A_W^{-1}
\bigl(j(hx)+N_f(\alpha,\beta,j(hx))\bigr)\\
&=
hx-\mathcal A_W^{-1}
\bigl(h(jx+N_f(\alpha,\beta,jx))\bigr)\\
&=
h\left[
x-\mathcal A_W^{-1}
\bigl(jx+N_f(\alpha,\beta,jx)\bigr)
\right]\\
&=
h\Psi_W(\alpha,\beta,x).
\end{aligned}
\]
Therefore, \(\Psi_W\) is \(G_W\)-equivariant.
\end{proof}

Consider the periodic Riesz equation
\begin{equation}\label{eq:riesz-original-fixed-point}
(-\partial_t^2)^{q/2}x=N_g(\lambda,jx).
\end{equation}
Define
\[
\mathcal A_R:=(-\partial_t^2)^{q/2}+\Id.
\]

\begin{proposition}\label{prop:riesz-operator-properties}
The operator \(\mathcal A_R\) is densely defined, self-adjoint, and strictly
positive. Moreover,
\[
\mathcal A_R:
H_{\mathrm{per}}^q(S^1;V)
\longrightarrow
L^2(S^1;V)
\]
is bounded.
\end{proposition}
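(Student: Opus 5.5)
The plan is to work entirely on the Fourier side, where \(\mathcal A_R\) acts as the multiplier \(1+|m|^q\). By Parseval, as in the proof of Proposition~\ref{prop:compact-embedding}, \(Y=L^2(S^1;V)\) is identified with the closed real subspace of \(\ell^2(\bz;V_{\mathbb C})\) cut out by the reality condition \(x_{-m}=\overline{x_m}\). Since \(1+|m|^q=1+|-m|^q\) is real and even in \(m\), the multiplier preserves this subspace, so \(\mathcal A_R\) maps real functions to real functions. We regard \(\mathcal A_R\) as an unbounded operator in \(Y\) with domain \(D(\mathcal A_R)=X=H^q_{\mathrm{per}}(S^1;V)\). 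For \(x\in X\) this is exactly the set where \(\sum_m(1+|m|^q)^2|x_m|^2<\infty\), because
\[
(1+|m|^2)^{q}\le (1+|m|^q)^2\le 2(1+|m|^2)^{q}.
\]

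\textbf{Boundedness.} The right-hand inequality gives
\[
\|\mathcal A_Rx\|_Y^2=\sum_m(1+|m|^q)^2|x_m|^2\le 2\|x\|_X^2 .
\]
The inequality itself follows from \(|m|^{2q}\le(1+|m|^2)^q\) and \((a+b)^2\le 2(a^2+b^2)\).

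\textbf{Dense domain.} Trigonometric polynomials, meaning finite Fourier sums satisfying the reality condition, lie in \(X\). They are dense in \(Y\) because the truncations \(P_Nx\) converge to \(x\) in \(L^2\).

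\textbf{Symmetry and positivity.} For \(x,y\in X\), Parseval gives
\[
\langle\mathcal A_Rx,y\rangle_Y=\sum_m(1+|m|^q)\langle x_m,y_m\rangle_{V_{\mathbb C}}=\langle x,\mathcal A_Ry\rangle_Y .
\]
This sum is real after pairing the terms \(m\) and \(-m\). It also gives \(\langle \mathcal A_Rx,x\rangle_Y\ge\|x\|_Y^2\), which is strict positivity with lower bound \(1\).

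\textbf{Self-adjointness.} This is the main point, since symmetry alone does not suffice. The plan is to show that \(\mathcal A_R:X\to Y\) is surjective. Given \(y\in Y\), set \(x_m=y_m/(1+|m|^q)\). The left-hand inequality above gives \(\|x\|_X\le\|y\|_Y\), so \(x\in X\), and \(x\) satisfies the reality condition. Hence \(\mathcal A_R\) is onto and, in particular, \(\operatorname{Ran}(\mathcal A_R-\lambda)=Y\) for a real \(\lambda\), say \(\lambda=0\) or \(\lambda=-1\).

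A symmetric operator with \(\operatorname{Ran}(\mathcal A_R-\lambda)=Y\) for a real \(\lambda\) is self-adjoint. To check this directly: if \(y\in D(\mathcal A_R^*)\), choose \(x\in X\) with \(\mathcal A_Rx=\mathcal A_R^*y\). Then for all \(z\in X\),
\[
\langle y-x,\mathcal A_Rz\rangle=0 .
\]
By surjectivity this forces \(y=x\in X\), so \(\mathcal A_R^*=\mathcal A_R\).

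Alternatively, one can identify \(\mathcal A_R\) as a real multiplication operator on the weighted \(\ell^2\) space with its maximal domain; such operators are self-adjoint. The only care needed throughout is to keep the real structure \(x_{-m}=\overline{x_m}\) and the real-valuedness of the inner product consistent.
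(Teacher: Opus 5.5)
Your approach is the paper's: the Fourier multiplier \(1+|m|^q\), Parseval for symmetry and for \(\langle\mathcal A_Rx,x\rangle_Y\ge\|x\|_Y^2\), and a comparison of \((1+|m|^q)^2\) with \((1+|m|^2)^q\). However, one step is false as written. The right-hand inequality \((1+|m|^q)^2\le 2(1+|m|^2)^q\) fails for every \(0<q<1\). At \(m=1\) the left side is \(4\) and the right side is \(2^{1+q}<4\).

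The two facts you cite give \((1+|m|^q)^2\le 2\bigl(1+|m|^{2q}\bigr)\le 2+2(1+|m|^2)^q\). The extra \(2\) cannot be dropped: subadditivity of \(t\mapsto t^q\) gives \((1+|m|^2)^q\le 1+|m|^{2q}\), which points the wrong way for your claim. Absorbing the \(2\) via \((1+|m|^2)^q\ge 1\) yields
\[
(1+|m|^q)^2\le 4(1+|m|^2)^q ,
\]
so \(\|\mathcal A_Rx\|_Y^2\le 4\|x\|_X^2\). This is exactly the constant in the paper. No later step uses the value of the constant, so the two-sided comparison of weights, the identification \(D(\mathcal A_R)=X\), and boundedness all survive once \(2\) is replaced by \(4\).

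With that correction, your proof is more complete than the paper's on the remaining points. The paper never argues density. For self-adjointness it only asserts that the Fourier representation shows \(D(\mathcal A_R^*)=H^q_{\mathrm{per}}(S^1;V)\).

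Your left-hand inequality \((1+|m|^2)^q\le(1+|m|^q)^2\) is correct, by the same subadditivity. It gives \(\|\mathcal A_R^{-1}y\|_X\le\|y\|_Y\) and hence surjectivity of \(\mathcal A_R:X\to Y\). Your direct check that a densely defined symmetric operator with \(\operatorname{Ran}\mathcal A_R=Y\) satisfies \(\mathcal A_R^*=\mathcal A_R\) is also correct, and it works verbatim in the real Hilbert space \(Y\). This supplies the justification that the paper's one-line claim about the adjoint domain leaves implicit.
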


\begin{proof}
Let
\[
x(t)=\sum_{m\in\bz}x_m e^{imt}.
\]
Then
\[
\mathcal A_Rx(t)
=
\sum_{m\in\bz}
\bigl(1+|m|^q\bigr)x_m e^{imt}.
\]
Hence, by Parseval's identity,
\[
\begin{aligned}
\|\mathcal A_Rx\|_Y^2
&=
\sum_{m\in\bz}
\bigl(1+|m|^q\bigr)^2|x_m|^2\\
&\leq
4\sum_{m\in\bz}
(1+|m|^2)^q|x_m|^2\\
&=
4\|x\|_{H_{\mathrm{per}}^q}^2.
\end{aligned}
\]
Therefore,
\[
\mathcal A_R:
H_{\mathrm{per}}^q(S^1;V)\longrightarrow Y
\]
is bounded.

Since the Fourier multipliers \(1+|m|^q\) are real, for
\(x,y\in D(\mathcal A_R)\) one has
\[
\begin{aligned}
\langle \mathcal A_Rx,y\rangle_Y
&=
\sum_{m\in\bz}
\bigl(1+|m|^q\bigr)
\langle x_m,y_m\rangle_{V_{\mathbb C}}\\
&=
\langle x,\mathcal A_Ry\rangle_Y.
\end{aligned}
\]
Thus, \(\mathcal A_R\) is symmetric. Its Fourier representation shows that
the domain of its adjoint is again \(H_{\mathrm{per}}^q(S^1;V)\), and hence
\(\mathcal A_R\) is self-adjoint.

Finally,
\[
\langle \mathcal A_Rx,x\rangle_Y
=\sum_{m\in\bz}
\bigl(1+|m|^q\bigr)|x_m|^2
\geq
\|x\|_Y^2.
\]
Therefore, \(\mathcal A_R\) is strictly positive.
\end{proof}

Since
\[
1+|m|^q>0,\quad m\in\bz,
\]
the operator \(\mathcal A_R\) is an isomorphism from \(X\) onto \(Y\). Its inverse is
\[
\mathcal A_R^{-1}y(t)=\sum_{m\in\bz}
\frac{y_m}{1+|m|^q}e^{imt}.
\]
Similarly, equation~\eqref{eq:riesz-original-fixed-point} is equivalent to
\[
x=\mathcal A_R^{-1}\bigl(jx+N_g(\lambda,jx)\bigr).
\]
Define
\[
\Psi_R:\mathbb R\times X\longrightarrow X
\]
by
\[
\Psi_R(\lambda,x):=x-\mathcal A_R^{-1}
\bigl(jx+N_g(\lambda,jx)\bigr).
\]
Then
\[
\Psi_R(\lambda,x)=0
\]
if and only if \(x\) is a periodic solution of the Riesz equation.

By an analogous argument, the operator \(\Psi_R\) is a compact perturbation of the identity and \(G_R\)-equivariant.

Since
\[
f(\alpha,\beta,0)=0, \quad g(\lambda,0)=0,
\]
we have
\[
\Psi_W(\alpha,\beta,0)=0, \quad \Psi_R(\lambda,0)=0.
\]
Thus, the sets
\[
\mathcal T_W:=
\left\{(\alpha,\beta,0):(\alpha,\beta)\in\mathbb R^2\right\}
\]
and
\[
\mathcal T_R:=\left\{(\lambda,0):
\lambda\in\mathbb R\right\}
\]
are the trivial solution branches of the Weyl and Riesz problems, respectively.

A point \(
(\alpha^*,\beta^*,0)\in\mathcal T_W\)
is called a bifurcation point of the Weyl problem if every neighborhood of \((\alpha^*,\beta^*,0)\) contains a solution \((\alpha,\beta,x)\)
of \(\Psi_W(\alpha,\beta,x)=0\) with \(x\neq0\).

Similarly, a point \((\lambda^*,0)\in\mathcal T_R\)
is called a bifurcation point of the Riesz problem if every neighborhood of \((\lambda^*,0)\) contains a solution
\((\lambda,x)
\)
of \(\Psi_R(\lambda,x)=0\) with \(x\neq0\).

%----------------------------------------------
\section{Linearization and Spectral Decomposition}
%----------------------------------------------

Let
\[
B_W(\alpha,\beta):=D_xf(\alpha,\beta,0)
\]
and
\[
B_R(\lambda):=D_xg(\lambda,0).
\]
The linearizations of the fixed-point maps with respect to \(x\) at the trivial
solutions are
\[
D_x\Psi_W(\alpha,\beta,0)
=\Id-\mathcal A_W^{-1}
\bigl(\Id+B_W(\alpha,\beta)\bigr),
\]
and
\begin{equation}\label{eq:linearization-riesz}
D_x\Psi_R(\lambda,0)=
\Id-\mathcal A_R^{-1}(\Id+B_R(\lambda))
\end{equation}

A parameter value can be critical only if the corresponding linearized
fixed-point operator is not invertible. Thus, the Weyl critical set is
\[
\Lambda_W:=\left\{
(\alpha,\beta)\in\mathbb R^2:
D_x\Psi_W(\alpha,\beta,0)
\text{ is not invertible}
\right\},
\]
while the Riesz critical set is
\[
\Lambda_R:=
\left\{
\lambda\in\mathbb R:
D_x\Psi_R(\lambda,0)
\text{ is not invertible}
\right\}.
\]

These fixed-point formulations provide the analytic setting required for the
twisted equivariant degree in the Weyl problem and the ordinary equivariant
degree in the Riesz problem.

Since \(f\) and \(g\) are \(\mathcal G\)-equivariant, the linear operators \(B_W(\alpha,\beta)\) and \(B_R(\lambda)\) commute with the \(\mathcal G\)-action. As the additional \(\bz_2\)-action is given by sign reversal, these operators also commute with the action of \(\mathcal G\times\bz_2\). Hence, each \(\mathcal G\times\bz_2\)-isotypical component of \(V\) is invariant under \(B_W(\alpha,\beta)\) and \(B_R(\lambda)\).

Let
\[
V=V_0\oplus\cdots\oplus V_r
\]
be the \(\mathcal G\times\bz_2\)-isotypical decomposition of \(V\). For each
\(j=0,\ldots,r\), assume that \(V_j\) is modeled on an irreducible
\(\mathcal G\times\bz_2\)-representation \(\mathcal V_j\); that is,
\[
V_j\simeq \mathcal V_j^{\oplus n_j}
\]
for some multiplicity \(n_j\geq1\).

To obtain the corresponding decomposition of the periodic function space, let
\[
X_0:=\left\{
x:S^1\to V:
x(t)\equiv v,\ v\in V
\right\}
\]
denote the space of \(V\)-valued constant functions. For \(m\geq1\), define
\[
X_m:=\left\{
u\cos(mt)+v\sin(mt):
u,v\in V
\right\}.
\]
Then
\[
X=H_{\mathrm{per}}^q(S^1;V)
=
\overline{\bigoplus_{m=0}^{\infty}X_m}.
\]

Let
\[
E_0:=\mathbb R,
\qquad
E_m:=
\operatorname{span}_{\mathbb R}
\{\cos(mt),\sin(mt)\},
\qquad m\geq1.
\]
For \(j=0,\ldots,r\), define
\[
V_{j,0}:=
\left\{
x:S^1\to V_j:
x(t)\equiv v_j,\ v_j\in V_j
\right\},
\]
and, for \(m\geq1\),
\[
V_{j,m}:=
\left\{
u_j\cos(mt)+v_j\sin(mt):
u_j,v_j\in V_j
\right\}.
\]
Equivalently,
\[
V_{j,m}=V_j\otimes E_m,
\qquad
j=0,\ldots,r,\quad m\geq0.
\]
If
\[
V_j\simeq\mathcal V_j^{\oplus n_j},
\]
then the corresponding irreducible spatio-temporal representation is
\[
\mathcal V_{j,m}:=
\mathcal V_j\otimes E_m,
\]
and therefore
\[
V_{j,m}\simeq
\mathcal V_{j,m}^{\oplus n_j}.
\]
Thus,
\[
X_m=\bigoplus_{j=0}^{r}V_{j,m},
\qquad m\geq0,
\]
and consequently
\[
X=
\overline{
\bigoplus_{j=0}^{r}
\bigoplus_{m=0}^{\infty}
V_{j,m}
}.
\]

Each block \(V_{j,m}\) is invariant under the linearized Weyl and
Riesz operators. Therefore, the spectral analysis of
\(D_x\Psi_W(\alpha,\beta,0)\) and \(D_x\Psi_R(\lambda,0)\) reduces to the
study of their restrictions to the finite-dimensional blocks
\(V_{j,m}\).

%----------------------------------------------------
\subsection{Critical Points of the Weyl Equation}
%----------------------------------------------------

Define
\[
L_W(\alpha,\beta):=D_{W,+}^{q}-B_W(\alpha,\beta).
\]
A parameter pair \((\alpha,\beta)\) is critical if and only if \(L_W(\alpha,\beta)\) has a nontrivial kernel.

On the complex temporal Fourier mode \(e^{imt}\), one has
\[
D_{W,+}^{q}e^{imt}=(im)^q e^{imt},
\]
where
\[
(im)^q=|m|^q
\left[
\cos\left(\frac{\pi q}{2}\right)
+i\,\operatorname{sgn}(m)
\sin\left(\frac{\pi q}{2}\right)
\right].
\]

Let
\[
\eta_{j,k}(\alpha,\beta)
=
a_{j,k}(\alpha,\beta)
+i\,b_{j,k}(\alpha,\beta),
\qquad
k=1,\ldots,s_j,
\]
denote the eigenvalues of the complexification of
\(B_{W,j}(\alpha,\beta)\), where \(k\) indexes the distinct eigenvalue branches on the spatial isotypical component \(V_j\).

Then the eigenvalue of \(L_W(\alpha,\beta)\) on the corresponding spatio-temporal mode is
\[
\Delta_{j,k,m}^{W}(\alpha,\beta)
=(im)^q-\eta_{j,k}(\alpha,\beta).
\]

For \(m\geq1\), this becomes
\[
\Delta_{j,k,m}^{W}(\alpha,\beta)
=m^q\cos\left(\frac{\pi q}{2}\right)
-a_{j,k}(\alpha,\beta)+i\left[
m^q\sin\left(\frac{\pi q}{2}\right)
-b_{j,k}(\alpha,\beta)
\right].
\]

Hence the block \(V_{j,m}\) is critical precisely when
\begin{equation}\label{eq:critical}
\begin{aligned}
&a_{j,k}(\alpha,\beta)=
m^q\cos\left(\frac{\pi q}{2}\right)\\
&b_{j,k}(\alpha,\beta)=
m^q\sin\left(\frac{\pi q}{2}\right)
\end{aligned}
\end{equation}
for some \(k\).

Thus, for a nonzero temporal mode, criticality requires the simultaneous satisfaction of two real equations.

For the zero temporal mode, the Weyl multiplier is zero. Hence
\[
\Delta_{j,k,0}^{W}(\alpha,\beta)
=-\eta_{j,k}(\alpha,\beta),
\]
and the zero mode is critical when
\[
\eta_{j,k}(\alpha,\beta)=0.
\]

The eigenvalue corresponding to the \(j,m\)-block of the linearized Weyl fixed-point map is given by
\[
\mu_{j,k,m}^{W}(\alpha,\beta)
=1-\frac{1+\eta_{j,k}(\alpha,\beta)}
{1+(im)^q}=
\frac{(im)^q-\eta_{j,k}(\alpha,\beta)}
{1+(im)^q}.
\]
Since
\[
1+(im)^q\neq0,
\]
the fixed-point eigenvalue vanishes precisely when
\[
\Delta_{j,k,m}^{W}(\alpha,\beta)=0.
\]

%----------------------------------------------------
\subsection{Critical Points of the Riesz Equation}
%----------------------------------------------------
Define
\[
L_R(\lambda):=
(-\partial_t^2)^{q/2}-B_R(\lambda).
\]

Assume that \(B_{R,j}(\lambda)\) is self-adjoint and let
\[
\nu_{j,k}(\lambda)\in\mathbb R
\]
be one of its eigenvalues. Since
\[
(-\partial_t^2)^{q/2}e^{imt}
=|m|^q e^{imt},
\]
the eigenvalue of \(L_R(\lambda)\) on the corresponding \((j,m)\)-block is
\[
\Delta_{j,k,m}^{R}(\lambda)=
|m|^q-\nu_{j,k}(\lambda).
\]

For \(m\geq1\), the block \(V_{j,m}\) is critical precisely when
\[
\nu_{j,k}(\lambda)=m^q \quad \text{for some } k.
\]
Unlike the Weyl case, this is one real equation in the single real parameter \(\lambda\). Therefore, isolated critical values occur generically in a one-parameter Riesz family.

For the zero temporal mode,
\[
\Delta_{j,k,0}^{R}(\lambda)
=-\nu_{j,k}(\lambda),
\]
so the constant mode is critical when
\[
\nu_{j,k}(\lambda)=0.
\]
The eigenvalues corresponding to the \((j,m)\)-block of the linearized Riesz fixed-point map~\eqref{eq:linearization-riesz} are given by
\[
\mu_{j,k,m}^{R}(\lambda)
=1-\frac{1+\nu_{j,k}(\lambda)}
{1+|m|^q}=
\frac{|m|^q-\nu_{j,k}(\lambda)}{1+|m|^q}.
\]
Since
\[
1+|m|^q>0,
\]
one has
\[
\mu_{j,k,m}^{R}(\lambda)=0
\quad\Longleftrightarrow\quad
\Delta_{j,k,m}^{R}(\lambda)=0.
\]

%----------------------------------------------------
\subsection{Critical parameter sets}
%----------------------------------------------------

The critical set of the Weyl problem is
\[
\Lambda_W=\bigcup_{j=0}^{r}\bigcup_{k=1}^{s_j}
\bigcup_{m=0}^{\infty}\left\{
(\alpha,\beta)\in\mathbb R^2:
\Delta_{j,k,m}^{W}(\alpha,\beta)=0
\right\}.
\]
For \(m\geq1\), each condition \(\Delta_{j,k,m}^{W}=0\) consists of two real equations. Under the nondegeneracy condition
\[
\det D_{(\alpha,\beta)}
\begin{pmatrix}
\operatorname{Re}\Delta_{j,k,m}^{W}\\
\operatorname{Im}\Delta_{j,k,m}^{W}
\end{pmatrix}
(\alpha^*,\beta^*)
\neq0,
\]
the critical parameter pair \((\alpha^*,\beta^*)\) is isolated.

The critical set of the Riesz problem is
\[
\Lambda_R=\bigcup_{j=0}^{r}\bigcup_{k=1}^{s_j}
\bigcup_{m=0}^{\infty}
\left\{\lambda\in\mathbb R:
\Delta_{j,k,m}^{R}(\lambda)=0
\right\}.
\]

The spectral formulas obtained above show the essential difference between the two problems. The one-sided Weyl equation is governed by complex eigenvalue maps whose zeros generically require two parameters, whereas the Riesz equation is governed by real eigenvalue crossings that can occur generically in a one-parameter family.

%----------------------------------------------------
\section{Winding Numbers for the Weyl Problem}
%----------------------------------------------------
On each spatial isotypical component \(V_j\) and each nonzero temporal mode \(m\geq1\), the linearized one-sided Weyl problem is governed by the complex-valued characteristic function
\[
\Delta_{j,k,m}^{W}(\alpha,\beta)=
u_{j,k,m}(\alpha,\beta)
+i\,v_{j,k,m}(\alpha,\beta),
\]
where
\[
u_{j,k,m}(\alpha,\beta)=
m^q\cos\left(\frac{\pi q}{2}\right)-a_{j,k}(\alpha,\beta)
\]
and
\[
v_{j,k,m}(\alpha,\beta)=
m^q\sin\left(\frac{\pi q}{2}\right)-b_{j,k}(\alpha,\beta).
\]

Thus, the \((j,k,m)\)-block is critical precisely when
\[
u_{j,k,m}(\alpha,\beta)=0
\]
and
\[
v_{j,k,m}(\alpha,\beta)=0.
\]
Equivalently,
\[
\Delta_{j,k,m}^{W}(\alpha,\beta)=0.
\]

Let \(\mathcal D\subset\mathbb R^2\) be a bounded open set such that
\[
\Delta_{j,k,m}^{W}(\alpha,\beta)\neq0
\qquad
\text{for every }(\alpha,\beta)\in\partial \mathcal D.
\]
The winding number of the image curve
\(\Delta_{j,k,m}^{W}(\partial \mathcal D)\) around the origin is defined by
\[
\operatorname{wind}
\bigl(\Delta_{j,k,m}^{W},
\partial {\mathcal D}, 0\bigr):=
\frac{1}{2\pi i}\int_{\partial \mathcal D}
\frac{d\Delta_{j,k,m}^{W}}
{\Delta_{j,k,m}^{W}}.
\]

Identifying \(\mathbb C\) with \(\mathbb R^2\), the winding number is equivalently the Brouwer degree of the real map
\[
(\alpha,\beta)
\longmapsto
\bigl(
u_{j,k,m}(\alpha,\beta),
v_{j,k,m}(\alpha,\beta)
\bigr).
\]
Hence,
\[
\operatorname{wind}
\bigl(
\Delta_{j,k,m}^{W},
\partial \mathcal D,0\bigr)
=\deg_{\mathrm B}\left(
\bigl(u_{j,k,m},v_{j,k,m}\bigr),\mathcal D,0
\right).
\]

Suppose that \((\alpha^*,\beta^*)\in \mathcal D\) is the unique zero of \(\Delta_{j,k,m}^{W}\) in \(\mathcal D\), and assume that the zero is nondegenerate:
\[
\det
D_{(\alpha,\beta)}
\begin{pmatrix}
u_{j,k,m}\\
v_{j,k,m}
\end{pmatrix}
(\alpha^*,\beta^*)
\neq 0.
\]
Then, for a sufficiently small disk
\(\mathcal D_\varepsilon(\alpha^*,\beta^*)\), the local winding number is
\[
\rho_{j,k,m}^{W}:=
\operatorname{wind}
\bigl(\Delta_{j,k,m}^{W},
\partial \mathcal D_\varepsilon(\alpha^*,\beta^*),
0\bigr)
=\operatorname{sgn}
\det 
D_{(\alpha,\beta)}
\begin{pmatrix}
u_{j,k,m}\\
v_{j,k,m}
\end{pmatrix}
(\alpha^*,\beta^*).
\]

Since the temporal multiplier \((im)^q\) is independent of the parameters,
\[
\begin{pmatrix}
u_{j,k,m}\\
v_{j,k,m}
\end{pmatrix}
=
\begin{pmatrix}
m^q\cos\left(\dfrac{\pi q}{2}\right)\\[1mm]
m^q\sin\left(\dfrac{\pi q}{2}\right)
\end{pmatrix}
-
\begin{pmatrix}
a_{j,k}\\
b_{j,k}
\end{pmatrix}.
\]
Consequently,
\[
D_{(\alpha,\beta)}
\begin{pmatrix}
u_{j,k,m}\\
v_{j,k,m}
\end{pmatrix}
=-
D_{(\alpha,\beta)}
\begin{pmatrix}
a_{j,k}\\
b_{j,k}
\end{pmatrix}.
\]
Because the parameter space has dimension two,
\[
\det(-I_2)=1.
\]
Therefore, multiplication by \(-I_2\) preserves orientation, and
\[
\rho_{j,k,m}^{W}
=\operatorname{sgn}
\det
D_{(\alpha,\beta)}
\begin{pmatrix}
a_{j,k}\\
b_{j,k}
\end{pmatrix}
(\alpha^*,\beta^*).
\]

For real-valued periodic functions, the temporal modes \(m\) and \(-m\) occur as a complex-conjugate pair and together generate the real temporal
space
\[
\operatorname{span}_{\mathbb R}
\left\{
\sin(mt),\cos(mt)
\right\}.
\]
Therefore, it is sufficient to index the nonconstant real Fourier blocks by \(m\geq1\). The winding number \(\rho_{j,k,m}^{W}\) is attached to the
corresponding real \(\mathcal G\times\bz_2\times S^1\)-representation \(V_{j,m}\).

If several isolated critical blocks lie inside \(\mathcal D\), the winding number of the determinant of the corresponding complex linearized block is obtained by adding the local winding numbers, counted with their algebraic
multiplicities.
%----------------------------------------------------
\section{Local Bifurcation for the One-Sided Weyl Problem}
\label{sec:weyl-local-bifurcation}
%----------------------------------------------------

In this section, we compute the local \(G_W\)-equivariant bifurcation invariant associated with an isolated critical parameter pair of the one-sided periodic Weyl equation. The calculation follows four steps. First, the nonlinear fixed-point field is completed by an auxiliary scalar component. Second, the resulting field is reduced to its linearization along the trivial branch. Third, the linearized
field is decomposed into its stationary and nonstationary Fourier components. Finally, the nonstationary contribution is expressed through the winding numbers of the critical complex Fourier blocks.

The derivative of \(\Psi_W\) with respect to \(x\) along the trivial branch is
\[
T_W(\alpha,\beta):=
D_x\Psi_W(\alpha,\beta,0)
=\Id-\mathcal A_W^{-1}
\bigl(\Id+B_W(\alpha,\beta)\bigr).
\]

A parameter pair \(p=(\alpha,\beta)\) is called regular if \(T_W(p)\) is an isomorphism. Let
\[
p^*=(\alpha^*,\beta^*)
\]
be an isolated critical parameter pair. Hence, there exists
\(\varepsilon>0\) such that
\[
T_W(p)
\text{ is an isomorphism whenever }
0<|p-p^*|\leq\varepsilon.
\]

We additionally assume that the stationary block is nonsingular:
\[
T_{W,0}(p^*):=
T_W(p^*)\big|_{X_0}
\quad\text{is an isomorphism}.
\]
This assumption excludes bifurcation caused exclusively by the constant temporal mode.

%----------------------------------------------------
\subsection{Definition of the local invariant}
%----------------------------------------------------

Choose \(\delta>0\) sufficiently small and define
\[
\Omega_W:=
\left\{
(p,x)\in\mathbb R^2\times X:
|p-p^*|<\varepsilon,\ 
\|x\|_X<\delta
\right\}.
\]

Let
\[
\vartheta_W:\overline{\Omega_W}\longrightarrow\mathbb R
\]
be the \(G_W\)-invariant auxiliary function
\[
\vartheta_W(p,x):=|p-p^*|
\bigl(\|x\|_X-\delta\bigr)
+\|x\|_X-\frac{\delta}{2}.
\]
The action of \(G_W\) on the parameter variables is trivial, and therefore \(\vartheta_W\) is \(G_W\)-invariant.

For \(x=0\) and \(|p-p^*|=\varepsilon\), after a harmless positive rescaling of the parameter variables if necessary, one has
\[
\vartheta_W(p,0)<0.
\]
On the lateral boundary \(\|x\|_X=\delta\), one has
\[
\vartheta_W(p,x)>0.
\]

Define the complemented field
\[
\mathfrak F_W(p,x):=
\bigl(\vartheta_W(p,x),
\Psi_W(p,x)\bigr),
\qquad
(p,x)\in\overline{\Omega_W}.
\]
For \(\varepsilon\) and \(\delta\) sufficiently small,
\(\mathfrak F_W\) has no zero on \(\partial\Omega_W\). Hence, its twisted \(G_W\)-equivariant degree is well defined.

\begin{definition}\label{def:weyl-local-invariant}
The local Weyl bifurcation invariant at \((p^*,0)\) is
\[
\omega_W[p^*,0]:=
G_W\text{-}\deg
\bigl(
\mathfrak F_W,\Omega_W
\bigr)
\in A_1^t(G_W),
\]
where \(A_1^t(G_W)\) denotes the twisted equivariant-degree group associated with orbit types having one-dimensional Weyl group.
\end{definition}

%----------------------------------------------------
\subsection{Reduction to the linearized field}
%----------------------------------------------------

The first step is to replace the nonlinear field by its derivative along the trivial branch.

\begin{proposition}\label{prop:weyl-linear-reduction}
For \(\varepsilon>0\) and \(\delta>0\) sufficiently small,
\(\mathfrak F_W\) is admissibly \(G_W\)-homotopic in \(\Omega_W\) to
\[
\mathfrak L_W(p,x):=
\bigl(
\vartheta_W(p,x),
T_W(p)x
\bigr).
\]
Consequently,
\[
\omega_W[p^*,0]=
G_W\text{-}\deg
\bigl(\mathfrak L_W,\Omega_W\bigr).
\]
\end{proposition}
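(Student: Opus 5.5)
We use the straight-line homotopy
\[
\mathcal H(s,p,x):=\Bigl(\vartheta_W(p,x),\,(1-s)\Psi_W(p,x)+s\,T_W(p)x\Bigr),\qquad s\in[0,1],
\]
and show that it is an admissible \(G_W\)-equivariant compact-field homotopy on \(\overline{\Omega_W}\). The two deformed components must be treated separately.

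First, \(\vartheta_W\) is \(s\)-independent and \(G_W\)-invariant. Second, \(T_W(p)x=x-\mathcal A_W^{-1}(x+B_W(p)x)\). Here \(B_W(p)\) commutes with the \(\mathcal G\times\bz_2\)-action, acts pointwise in \(t\), and hence commutes with time shifts. By Proposition~\ref{prop:weyl-equivariance}, \(\mathcal A_W^{-1}\) is equivariant. So each \(\mathcal H(s,\cdot)\) is \(G_W\)-equivariant. For compactness, the perturbation part is
\[
(1-s)\mathcal A_W^{-1}\bigl(jx+N_f(p,jx)\bigr)+s\,\mathcal A_W^{-1}\bigl(jx+B_W(p)jx\bigr).
\]
Proposition~\ref{prop:weyl-compact-perturbation} and the same argument applied to the linear Nemytskii map \(x\mapsto B_W(p)x\) show that this is compact and jointly continuous in \((s,p,x)\) on bounded sets.

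The main point is that \(\mathcal H\) has no zeros on \(\partial\Omega_W\) for small \(\varepsilon,\delta\). There are two boundary pieces.

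On the lateral part \(\|x\|_X=\delta\), we have \(\vartheta_W>0\) as already noted, so the first component does not vanish.

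On the part \(|p-p^*|=\varepsilon\), \(\|x\|_X<\delta\), a zero forces \(\vartheta_W=0\). This gives \(\|x\|_X(1+\varepsilon)=\varepsilon\delta+\delta/2\), so \(x\neq0\); with the rescaling used to make \(\vartheta_W(p,0)<0\), \(\|x\|_X\) is bounded away from \(0\) by a fixed multiple of \(\delta\). We must then rule out
\[
(1-s)\Psi_W(p,x)+sT_W(p)x=0,
\qquad\text{equivalently}\qquad
T_W(p)x=-(1-s)R(p,x),
\]
where \(R(p,x):=\Psi_W(p,x)-T_W(p)x=-\mathcal A_W^{-1}\bigl(N_f(p,jx)-B_W(p)jx\bigr)\).

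The hard step is the estimate \(\|R(p,x)\|_X=o(\|x\|_X)\) uniformly for \(p\) on the compact circle \(|p-p^*|=\varepsilon\). We use the \(C^2\) hypothesis (A2) together with \(f(p,0)=0\), via the pointwise Taylor bound
\[
|f(p,v)-B_W(p)v|\le C|v|^2\quad\text{for }|v|\le 1,\qquad |f(p,v)-B_W(p)v|\le C'|v|\quad\text{for all }v,
\]
using (A3) for large \(|v|\). For \(q>\tfrac12\), Proposition~\ref{prop:sobolev-compact-embedding} gives \(\|jx\|_\infty\le c\|x\|_X\), so \(\|N_f(p,jx)-B_W(p)jx\|_Y\le C\|x\|_X^2\). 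Boundedness of \(\mathcal A_W^{-1}:Y\to X\) then gives \(\|R\|_X\le C\|x\|_X^2\).

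For \(q\le\tfrac12\) the embedding into \(C_{\mathrm{per}}\) is unavailable, so we proceed differently. We first argue by contradiction with sequences \(x_n\to0\) in \(X\) and \(p_n\) on the circle, normalised by \(w_n=x_n/\|x_n\|_X\). Then \(jw_n\to w\) in \(Y\) after passing to a subsequence, by Proposition~\ref{prop:compact-embedding}, and we use dominated/Vitali convergence on the difference quotients \((f(p_n,\|x_n\|v)-B_W(p_n)\|x_n\|v)/\|x_n\|\). This is the step we expect to be most delicate. If it fails in general, we would fall back to the standing assumption \(q>\tfrac12\).

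With the estimate in hand, we use that the circle \(|p-p^*|=\varepsilon\) is compact and \(T_W(p)\) is an isomorphism there. Then \(\|T_W(p)x\|_X\ge c_\varepsilon\|x\|_X\) with \(c_\varepsilon>0\). For \(\delta\) chosen small relative to \(c_\varepsilon\), we get \(\|R(p,x)\|_X<c_\varepsilon\|x\|_X\) on the relevant range of \(x\), which contradicts the displayed equation. Hence \(\mathcal H\) is admissible.

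By the homotopy invariance of the twisted \(G_W\)-equivariant degree,
\[
\omega_W[p^*,0]=G_W\text{-}\deg(\mathfrak F_W,\Omega_W)=G_W\text{-}\deg(\mathfrak L_W,\Omega_W).
\]
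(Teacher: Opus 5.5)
Your proposal is correct and is essentially the paper's argument: your straight-line homotopy equals \(T_W(p)x+(1-s)R(p,x)\), which is the paper's \(H_s\) with \(s\) reversed, and admissibility comes, exactly as in the paper, from three ingredients: the positive sign of \(\vartheta_W\) on the lateral boundary, the uniform lower bound \(\|T_W(p)x\|_X\ge c\|x\|_X\) on the parameter circle, and smallness of the remainder. You go further by actually proving \(\|R(p,x)\|_X=o(\|x\|_X)\) (the paper simply takes this from Fr\'echet differentiability), and your sequence argument for \(q\le\frac12\) does close: the difference quotients tend to \(0\) a.e., they are dominated by \(C'|w_n|\), and \(jw_n\to w\) in \(Y\), so Vitali applies and no fallback to \(q>\frac12\) is needed.
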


\begin{proof}
Since \(\Psi_W\) is continuously Fr\'echet differentiable with respect to
\(x\), one has
\[
\Psi_W(p,x)
=
T_W(p)x+R_W(p,x),
\]
where
\[
\frac{\|R_W(p,x)\|_X}{\|x\|_X}
\longrightarrow 0
\]
uniformly for \(p\in\overline{B_\varepsilon(p^*)}\) as
\(\|x\|_X\to0\).

For \(s\in[0,1]\), define
\[
H_s(p,x)
=\bigl(\vartheta_W(p,x),
T_W(p)x+sR_W(p,x)\bigr).
\]
On the lateral boundary \(\|x\|_X=\delta\), the first component is positive,
so \(H_s(p,x)\neq0\).

On the parameter boundary \(|p-p^*|=\varepsilon\), the operators \(T_W(p)\) are invertible. Compactness of the
boundary circle gives a constant \(c>0\) such that
\[
\|T_W(p)x\|_X
\geq
c\|x\|_X.
\]
One may assume that
\[
\|R_W(p,x)\|_X
\leq
\frac{c}{2}\|x\|_X.
\]
Therefore, for \(x\neq0\),
\[
\begin{aligned}
\|T_W(p)x+sR_W(p,x)\|_X
&\geq
\|T_W(p)x\|_X-s\|R_W(p,x)\|_X\\
&\geq
\frac{c}{2}\|x\|_X>0.
\end{aligned}
\]
When \(x=0\), the auxiliary component is nonzero on the parameter boundary.
Thus, \(H_s\) is an admissible \(G_W\)-equivariant homotopy, and the
homotopy property of the twisted degree proves the result.
\end{proof}

%----------------------------------------------------
\subsection{Stationary and dynamic Fourier components}
%----------------------------------------------------

Recall the Fourier--isotypical decomposition
\[
X=\overline{
\bigoplus_{j=0}^{r}
\bigoplus_{m=0}^{\infty}V_{j,m}
},
\qquad
V_{j,m}=V_j\otimes E_m.
\]

Set
\[
X_+:=\overline{\bigoplus_{j=0}^{r}
\bigoplus_{m=1}^{\infty}
V_{j,m}}.
\]
Thus,
\[
X=X_0\oplus X_+.
\]

Since \(T_W(p)\) is \(G_W\)-equivariant, it preserves every
\(V_{j,m}\). We write
\[
T_{W,j,m}(p):=
T_W(p)\big|_{V_{j,m}}.
\]
Define
\[
\mathfrak L_W^+(p,x_+)=
\bigl(\vartheta_W(p,x_+),T_W^+(p)x_+\bigr),
\qquad
T_W^+(p):=T_W(p)\big|_{X_+},
\]
and
\[
\Omega_W^+=
\left\{
(p,x_+)\in\mathbb R^2\times X_+:
|p-p^*|<\varepsilon,\ 
\|x_+\|_X<\delta
\right\}.
\]
By an admissible equivariant homotopy and the multiplication property of the equivariant degree, we obtain
\begin{equation}\label{eq:weyl-stationary-dynamic-product}
\omega_W[p^*,0]=
G_0\text{-}\deg
\bigl(T_{W,0}(p^*),B_\zeta(X_0)
\bigr)
\cdot G_W\text{-}\deg
\bigl(\mathfrak L_W^+,\Omega_W^+ \bigr),
\end{equation}
where \(
G_0:=\mathcal G\times\bz_2\), 
\(B_\zeta(X_0):=\left\{
x\in X_0:|x|<\zeta
\right\}\)
denotes the open \(\zeta\)-ball centered at the origin in \(X_0\) and \(\mathfrak L_W^+\) denotes the complemented field restricted to
\(\mathbb R^2\times X_+\).

\begin{definition}
Let \(\mathcal V_{j,m}\) be an irreducible \(G_W\)-representation. The \(G_W\)-equivariant basic degree corresponding to \(\mathcal V_{j,m}\) is
defined by
\[
\deg_{\mathcal V_{j,m}}^{\,G_W}:=
\eqdeg{G_W}\left(-\Id,
B_\zeta(\mathcal V_{j,m})
\right),
\]
where \(B_\zeta(\mathcal V_{j,m})\) denotes the open \(\zeta\)-ball centered at the origin in \(\mathcal V_{j,m}\), see~\cite{AED}.
\end{definition}

Let
\[
\sigma_-\bigl(T_{W,0}(p^*)\bigr)
\]
denote the set of negative real eigenvalues of the stationary block. For
\(\mu\in\sigma_-(T_{W,0}(p^*))\), let \(\mathfrak m_j(\mu)\) be the multiplicity of
\(\mathcal V_{j,0}\) in the corresponding generalized eigenspace. Since only negative real eigenvalues
contribute nontrivially to the degree of a real linear isomorphism, the stationary degree is given by
\begin{equation}\label{eq:weyl-stationary-degree}
G_0\text{-}\deg
\bigl(T_{W,0}(p^*),B_\zeta(X_0)\bigr)
=
\prod_{\mu\in\sigma_-(T_{W,0}(p^*))}
\prod_{j=0}^{r}
\left(
\deg_{\mathcal V_{j,0}}^{\,G_0}
\right)^{\mathfrak m_j(\mu)}
\in A(G_0).
\end{equation}
Here, \(A(G_0)\) denotes the Burnside ring generated by conjugacy classes \((H)\) of subgroups of \(G_0\) whose Weyl groups
\[
W_{G_0}(H)=N_{G_0}(H)/H
\]
have dimension zero.

The positive real spectral blocks and the real invariant blocks associated with nonreal complex-conjugate eigenvalue pairs belong to the identity component of the corresponding space of linear isomorphisms and therefore contribute the multiplicative identity to the degree. Hence only the negative real eigenvalues appear explicitly in the above product. If the stationary block has no negative real eigenvalues, the product is understood as the multiplicative identity.

%----------------------------------------------------
\subsection{Dynamic winding contributions}
%----------------------------------------------------

Recall
\[
\operatorname{wind}
\bigl(\mu_{j,k,m}^{W},\partial \mathcal D,0
\bigr)
=\operatorname{wind}
\bigl(
\Delta_{j,k,m}^{W},\partial \mathcal D,0
\bigr)=
\rho_{j,k,m}^{W}.
\]

Let
\[
\mathcal C_W(p^*):=
\left\{(j,k,m):
m\geq1,\Delta_{j,k,m}^{W}(p^*)=0
\right\}.
\]
The set \(\mathcal C_W(p^*)\) is finite, since
\[
|(im)^q|=m^q\longrightarrow\infty
\qquad\text{as }m\to\infty.
\]
Hence, only finitely many temporal modes can satisfy the criticality condition at the fixed parameter value \(p^*\).
\begin{lemma}\label{lem:weyl-dynamic-block-degree}
Let \((j,k,m)\in\mathcal C_W(p^*)\) with \(m\geq1\). Assume that
\[
\Delta_{j,k,m}^{W}(p)\neq0,
\qquad
p\in\partial\mathcal D,
\]
and that all remaining characteristic factors of the \((j,m)\)-block
are nonzero on \(\overline{\mathcal D}\). Then
\[
G_W\text{-}\deg
\bigl(\mathfrak L_{W,j,m},
\Omega_{W,j,m}\bigr)=
d_{j,k,m}\rho_{j,k,m}^{W}\deg_{\mathcal V_{j,m}}^{\,G_W},
\]
where \(d_{j,k,m}\) is multiplicity of the irreducible critical representation/eigenvalue branch in the corresponding isotypical block and 
\[
\rho_{j,k,m}^{W}
=
\operatorname{wind}
\left(
\Delta_{j,k,m}^{W},
\partial\mathcal D,0
\right).
\]
\end{lemma}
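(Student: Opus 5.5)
The plan is to reduce the block degree to the standard computation of the twisted degree for a one-parameter family of complex linear maps on an irreducible $S^1$-representation. The key tools are the multiplicativity of the degree and the homotopy property.

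\textbf{Step 1: complex structure of the block.} First identify $V_{j,m}$, via the temporal mode $e^{imt}$, with a complex vector space on which $S^1$ acts by multiplication by $e^{im\theta}$. On this space $T_{W,j,m}(p)$ is complex linear and commutes with $\mathcal G\times\bz_2$. Using the isotypical structure $V_{j,m}\simeq \mathcal V_{j,m}^{\oplus n_j}$, write $T_{W,j,m}(p)$ as a $\mathcal G\times\bz_2$-equivariant complex matrix acting on the multiplicity space $\mathbb C^{n_j}$. Its eigenvalues are the fixed-point eigenvalues $\mu^W_{j,k',m}(p)$, $k'=1,\dots,s_j$.

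\textbf{Step 2: splitting off the critical branch.} By hypothesis only the $k$-th factor can vanish on $\overline{\mathcal D}$. Use the generalized eigenspace decomposition, which can be chosen to depend continuously on $p$ near $\overline{\mathcal D}$ because the eigenvalue groups stay separated. This splits $V_{j,m}$ equivariantly into $W_k(p)\simeq\mathcal V_{j,m}^{\oplus d_{j,k,m}}$, corresponding to the critical branch, and a complementary part $W'(p)$ on which $T_{W,j,m}(p)$ is invertible for all $p\in\overline{\mathcal D}$.

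\textbf{Step 3: removing the noncritical part.} On $W'$ the operator is a complex-linear equivariant isomorphism, uniformly in $p$. Since $GL(\mathbb C^n)$ is connected, it can be homotoped through isomorphisms to the identity. This is an admissible homotopy, because invertibility keeps zeros away from the boundary, exactly as in the proof of Proposition~\ref{prop:weyl-linear-reduction}. By the multiplicativity property, $W'$ therefore contributes the unit.

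\textbf{Step 4: normal form on the critical part.} On $W_k$, homotope the restricted operator to $\mu^W_{j,k,m}(p)\cdot\Id$, again through complex-linear maps that stay invertible on $\partial\mathcal D$; nilpotent parts can be scaled to zero. The block then becomes a direct product of $d_{j,k,m}$ copies of the field $(p,z)\mapsto(\vartheta_W,\mu^W_{j,k,m}(p)z)$ on $\mathcal V_{j,m}$.

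\textbf{Step 5: degree of the elementary field.} For each copy, invoke the standard computation of the twisted degree for a complemented field of the form $(\vartheta, \mu(p)z)$ on an irreducible representation with nontrivial $S^1$-action; see \cite{AED}. It yields $\deg_B(\mu,\mathcal D)\cdot\deg^{G_W}_{\mathcal V_{j,m}}$. The Brouwer degree of $\mu^W_{j,k,m}$ equals its winding number, which equals $\rho^W_{j,k,m}$ by the earlier identification
\[
\operatorname{wind}(\mu^W_{j,k,m},\partial\mathcal D,0)=\operatorname{wind}(\Delta^W_{j,k,m},\partial\mathcal D,0),
\]
valid because the factor $1+(im)^q$ does not depend on $p$ and is nonzero. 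If the winding number is not $\pm1$, first deform $\mu$ by a homotopy on $\overline{\mathcal D}$ that is nonvanishing on $\partial\mathcal D$ into a map with $|\rho|$ nondegenerate zeros of equal sign. Additivity of the degree then reduces the count to the nondegenerate case, where the local computation gives $\pm\deg_{\mathcal V_{j,m}}$.

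\textbf{Step 6: summing the copies.} Finally, combine the $d_{j,k,m}$ copies. This is the main obstacle. In the twisted setting the product of fields sharing the same two parameters is not a Burnside-ring product; with a single auxiliary function and parameter space $\mathbb R^2$ of dimension equal to one plus the Weyl-group dimension, the contributions add. I would handle this by a further homotopy that perturbs $\mu^W_{j,k,m}(p)\Id$ into $\operatorname{diag}\bigl(\mu(p-c_1),\dots,\mu(p-c_d)\bigr)$ with distinct small shifts $c_i$. This separates the critical points inside $\mathcal D$, and excision and additivity then yield exactly $d_{j,k,m}\rho^W_{j,k,m}\deg^{G_W}_{\mathcal V_{j,m}}$. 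The shift must keep all zeros away from $\partial\mathcal D$, which holds for $|c_i|$ small since $\mu$ is nonvanishing on the compact set $\partial\mathcal D$.
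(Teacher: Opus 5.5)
Your proposal is correct in outline, but it obtains the factor $d_{j,k,m}$ by a different mechanism from the paper's.

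\textbf{The paper's route.} The paper treats the reduced $(j,m)$-block as a single complex matrix family $M_{j,m}^{W}(p)$. It then cites the isotypical crossing-number formula of \cite{AED}: the block degree is the class of the loop $M_{j,m}^{W}|_{\partial\mathcal D}$ in $\pi_1(GL(d,\mathbb C))\cong\bz$, times $\deg_{\mathcal V_{j,m}}^{\,G_W}$. It identifies that class with $\operatorname{wind}(\det_{\mathbb C}M_{j,m}^{W},\partial\mathcal D,0)$. Finally, it reads off $d_{j,k,m}\rho_{j,k,m}^{W}$ from the factorization $\det_{\mathbb C}M_{j,m}^{W}=h_{j,k,m}\,(\Delta_{j,k,m}^{W})^{d_{j,k,m}}$, where $h_{j,k,m}$ is nonvanishing on $\overline{\mathcal D}$.

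\textbf{Your route.} You normalize the critical part to $\mu_{j,k,m}^{W}(p)\Id$ on $d_{j,k,m}$ copies of $\mathcal V_{j,m}$. You use only the one-copy normalization $\deg_{\mathrm B}(\mu)\,\deg_{\mathcal V_{j,m}}^{\,G_W}$. Additivity over the copies then comes from the shift $\operatorname{diag}(\mu(p-c_1),\dots,\mu(p-c_d))$ together with excision.

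\textbf{What each buys.} Your argument is more self-contained. It also makes explicit why the copies add rather than multiply in $A_1^t(G_W)$, a point the paper leaves inside the cited formula. The paper's argument is shorter and needs no normal form, because the determinant factorization is insensitive to collisions between eigenvalue branches away from their zeros.

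\textbf{Where your argument needs care.} The continuous equivariant splitting in Step 2 and the nilpotent scaling in Step 4 require the critical branch to stay separated from the other eigenvalue branches on all of $\overline{\mathcal D}$. The lemma's hypotheses only say the other factors do not vanish there, not that they never coincide with $\mu_{j,k,m}^{W}$. There are two easy repairs:
\begin{enumerate}
\item Excise first to small discs around the zeros of $\Delta_{j,k,m}^{W}$, where separation follows from continuity, and then sum the contributions.
\item Observe that the degree of the linear complemented field depends only on the boundary loop in $GL(n,\mathbb C)$, via linear interpolation rel $\partial\mathcal D$. That loop is homotopic to $\operatorname{diag}(\mu,\dots,\mu,1,\dots,1)$, which gives your normal form without any splitting. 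This is exactly where the paper's determinant argument takes over.
\end{enumerate}
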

\begin{proof}
Fix \((j,k,m)\in\mathcal C_W(p^*)\). By the standard
finite-dimensional reduction for compact equivariant fields, the \((j,m)\)-block decomposes into a finite-dimensional critical part and an invariant complementary subspace on which the family remains uniformly invertible. The complementary part can therefore be deformed, through \(G_W\)-equivariant isomorphisms, to the identity and hence
contributes only the multiplicative identity to the equivariant degree.

Since \(m\geq1\), the real temporal Fourier space
\[
\operatorname{span}_{\mathbb R}
\{\cos(mt),\sin(mt)\}
\]
carries the natural complex structure induced by the \(S^1\)-action. Consequently, after the finite-dimensional reduction, the critical part of the \((j,m)\)-block is represented by a continuous complex
matrix family
\[
M_{j,m}^{W}(p),
\qquad
p\in\overline{\mathcal D}.
\]
Let \(d\) denote the complex dimension of this reduced block. By the
hypotheses,
\[
\det_{\mathbb C}M_{j,m}^{W}(p)\neq0,
\qquad
p\in\partial\mathcal D,
\]
and therefore
\[
M_{j,m}^{W}|_{\partial\mathcal D}:
\partial\mathcal D\simeq S^1
\longrightarrow
GL(d,\mathbb C)
\]
defines a loop of invertible complex matrices.

By the isotypical crossing-number formula for the twisted equivariant degree \cite{AED}, the contribution of the \((j,m)\)-block is determined by the integer represented by this complex matrix loop.
Since
\[
\pi_1\bigl(GL(d,\mathbb C)\bigr)\simeq\bz
\]
and the determinant induces an isomorphism
\[
\det_*:
\pi_1\bigl(GL(d,\mathbb C)\bigr)
\longrightarrow
\pi_1(\mathbb C^\times)
\simeq\bz,
\]
this integer is precisely the winding number of the determinant.
Hence
\begin{equation}\label{eq:weyl-determinantal-reduction}
G_W\text{-}\deg
\bigl(
\mathfrak L_{W,j,m},
\Omega_{W,j,m}\bigr)=
\operatorname{wind}
\left(\det_{\mathbb C}M_{j,m}^{W},
\partial\mathcal D,0
\right)\deg_{\mathcal V_{j,m}}^{\,G_W}.
\end{equation}
Since \(M_{j,m}^{W}(p)\) is the reduced matrix associated with the irreducible component \(\mathcal V_{j,m}\), the critical factor \(\Delta_{j,k,m}^{W}\) appears with multiplicity \(d_{j,k,m}\). As all other characteristic factors remain nonzero on \(\overline{\mathcal D}\), we may write
\[
\det_{\mathbb C}M_{j,m}^{W}(p)
=h_{j,k,m}(p)
\bigl(\Delta_{j,k,m}^{W}(p)\bigr)^{d_{j,k,m}},
\]
where \(h_{j,k,m}\) is continuous and nonvanishing on \(\overline{\mathcal D}\).

Since \(h_{j,k,m}\) extends as a nonvanishing map over
\(\overline{\mathcal D}\), its restriction to
\(\partial\mathcal D\) is null-homotopic in \(\mathbb C^\times\).
Therefore,
\[
\operatorname{wind}
\left(h_{j,k,m},
\partial\mathcal D,0\right)=0.
\]

Using the additivity of the winding number under multiplication, we
obtain
\[
\begin{aligned}
\operatorname{wind}
\left(
\det_{\mathbb C}M_{j,m}^{W},
\partial\mathcal D,0
\right)
&=d_{j,k,m}
\operatorname{wind}
\left(\Delta_{j,k,m}^{W},
\partial\mathcal D,0\right)
\\
&=
d_{j,k,m}\rho_{j,k,m}^{W}.
\end{aligned}
\]
Substituting this identity into
\eqref{eq:weyl-determinantal-reduction} yields
\[
G_W\text{-}\deg
\bigl(
\mathfrak L_{W,j,m},
\Omega_{W,j,m}
\bigr)=d_{j,k,m}\rho_{j,k,m}^{W}
\deg_{\mathcal V_{j,m}}^{\,G_W},
\]
which completes the proof.
\end{proof}

By the splitting and additivity properties of the twisted degree,
\begin{equation}\label{eq:weyl-total-dynamic-degree}
G_W\text{-}\deg
\bigl(
\mathfrak L_W^+,
\Omega_W^+
\bigr)
=
\sum_{(j,k,m)\in\mathcal C_W(p^*)}
d_{j,k,m}\rho_{j,k,m}^{W}
\deg_{\mathcal V_{j,m}}^{\,G_W}.
\end{equation}

%----------------------------------------------------
\subsection{Formula for the local Weyl invariant}
%----------------------------------------------------

\begin{theorem}[Computation of the local Weyl invariant]
\label{thm:weyl-local-invariant-formula}
Let \(p^*=(\alpha^*,\beta^*)\) be an isolated dynamic critical parameter pair. Assume that the stationary block \(T_{W,0}(p^*)\) is an isomorphism.
Then
\begin{equation}\label{eq:weyl-local-invariant-formula}
\begin{aligned}
\omega_W[p^*,0]
&=
\prod_{\mu\in\sigma_-(T_{W,0}(p^*))}
\prod_{j=0}^{r}
\left(
\deg_{\mathcal V_{j,0}}^{\,G_0}
\right)^{\mathfrak m_j(\mu)}
\cdot
\left(
\sum_{(j,k,m)\in\mathcal C_W(p^*)}
d_{j,k,m}\rho_{j,k,m}^{W}
\deg_{\mathcal V_{j,m}}^{\,G_W}
\right).
\end{aligned}
\end{equation}
\end{theorem}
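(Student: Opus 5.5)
The plan is to assemble the formula from the three reductions already in the paper, in order. First, Proposition~\ref{prop:weyl-linear-reduction} gives
\[
\omega_W[p^*,0]=G_W\text{-}\deg(\mathfrak L_W,\Omega_W),
\]
so we may work with the linearized complemented field. Next, we use the $G_W$-invariant splitting $X=X_0\oplus X_+$. Since $T_W(p)$ preserves each $V_{j,m}$, the field splits accordingly.

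The stationary part needs a parameter-freezing step. By the standing hypothesis, $T_{W,0}(p^*)$ is an isomorphism. Shrinking $\varepsilon$ if necessary, $T_{W,0}(p)$ stays invertible for $|p-p^*|\le\varepsilon$. The linear homotopy
\[
(s,p,x_0,x_+)\mapsto\bigl(\vartheta_W,\;T_{W,0}(p^*+s(p-p^*))x_0+T^+_W(p)x_+\bigr)
\]
is then admissible on $\Omega_W$, for two reasons:
\begin{itemize}
\item on the parameter boundary the full operator is invertible (and $\vartheta_W<0$ when $x=0$);
\item on the lateral boundary $\vartheta_W>0$.
\end{itemize}
The deformed field is a product of the parameter-free map $T_{W,0}(p^*)$ on $B_\zeta(X_0)$ with $\mathfrak L_W^+$. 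Here one replaces $\vartheta_W(p,x)$ by $\vartheta_W(p,x_+)$ through a further admissible homotopy of the auxiliary function, since zeros are forced onto $x_0=0$. The multiplication property of the twisted degree then yields \eqref{eq:weyl-stationary-dynamic-product}.

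For the stationary factor, the $G_0$-degree of a $G_0$-equivariant linear isomorphism of the finite-dimensional space $X_0\simeq V$ is computed by the standard formula: deform it within $GL^{G_0}(X_0)$ to a map that is $-\Id$ on the negative-real generalized eigenspaces and $\Id$ elsewhere. Complex-conjugate pairs and positive eigenvalues are connected to the identity isotypically. Multiplicativity then gives \eqref{eq:weyl-stationary-degree}.

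For the dynamic factor, decompose $X_+$ into the blocks $V_{j,m}$ with $m\ge1$. Only finitely many blocks are singular for $p\in\overline{B_\varepsilon(p^*)}$, because $m^q\to\infty$ while $\eta_{j,k}$ is bounded. On all remaining blocks $T_{W,j,m}(p)$ is uniformly invertible. Being complex-linear, with respect to the $S^1$-induced complex structure, these blocks can be deformed equivariantly to the identity, using connectedness of $GL(d,\mathbb C)$ and the fact that the contribution of a complex block depends only on its loop class. Those blocks therefore contribute trivially. The additivity/splitting property of the twisted degree (\eqref{eq:weyl-total-dynamic-degree}) expresses $G_W\text{-}\deg(\mathfrak L_W^+,\Omega_W^+)$ as the sum of the block degrees. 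Lemma~\ref{lem:weyl-dynamic-block-degree}, applied with $\mathcal D=B_\varepsilon(p^*)$, evaluates each block as $d_{j,k,m}\rho^W_{j,k,m}\deg^{G_W}_{\mathcal V_{j,m}}$. Multiplying the stationary and dynamic factors gives \eqref{eq:weyl-local-invariant-formula}. This uses the $A(G_0)$-module structure of $A_1^t(G_W)$, where $A(G_0)$ acts through the inclusion induced by the projection $G_W\to G_0$.

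The main obstacle is justifying the splitting of the twisted degree over infinitely many blocks together with a shared auxiliary function $\vartheta_W$. The complemented field is not literally a product over blocks, and additivity in $A_1^t$ holds for sums of degrees of separately complemented fields only after suitable homotopies. I would handle this first by a finite-dimensional reduction. Project onto the finitely many critical blocks plus $X_0$ and deform the complementary part of $T_W(p)$ to $\Id$, which is admissible since it is uniformly invertible. Then invoke the standard result of \cite{AED}: for a complemented field whose critical part is a direct sum of complex blocks parametrized over a disk, the twisted degree is the sum of the individual crossing contributions. In this step I would also check that the parameter-rescaling remark makes $\vartheta_W(p,0)<0$ on $|p-p^*|=\varepsilon$ compatible with each restricted block field.
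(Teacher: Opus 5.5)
Your proposal is correct and follows the same route as the paper, whose proof simply chains Proposition~\ref{prop:weyl-linear-reduction}, the product splitting \eqref{eq:weyl-stationary-dynamic-product}, the stationary formula \eqref{eq:weyl-stationary-degree}, and the winding-number sum \eqref{eq:weyl-total-dynamic-degree} obtained from Lemma~\ref{lem:weyl-dynamic-block-degree}. The extra homotopies you describe are exactly the steps the paper leaves implicit: freezing the stationary block at \(p^*\), replacing \(\vartheta_W(p,x)\) by \(\vartheta_W(p,x_+)\), and the finite-dimensional reduction that justifies additivity with a shared auxiliary function.
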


\begin{proof}
Proposition~\ref{prop:weyl-linear-reduction} reduces the nonlinear complemented field to its linearization. Formula
\eqref{eq:weyl-stationary-dynamic-product} separates the stationary and dynamic parts. The stationary factor is given by
\eqref{eq:weyl-stationary-degree}, while the dynamic contribution is given by \eqref{eq:weyl-total-dynamic-degree}. Their product yields \eqref{eq:weyl-local-invariant-formula}.
\end{proof}
\begin{remark}
Since
\[
G_W=G_0\times S^1,
\]
there is a natural inflation homomorphism
\[
\iota:A(G_0)\longrightarrow A(G_W),
\qquad
\iota\bigl([G_0/H]\bigr)
=
[G_W/(H\times S^1)],
\]
obtained by letting the \(S^1\)-factor act trivially. Hence, the stationary factor in \(A(G_0)\) is identified with its image in \(A(G_W)\), while the dynamic factor belongs to the first twisted Burnside group \(A_1^t(G_W)\). Their product is understood through the natural action
\[
A(G_W)\times A_1^t(G_W)
\longrightarrow
A_1^t(G_W),
\]
and therefore
\[
\omega_W[p^*,0]\in A_1^t(G_W).
\]
\end{remark}

\begin{remark}\label{rem:weyl-trivial-stationary-factor}
If \(T_{W,0}(p^*)\) is equivariantly homotopic through isomorphisms to the
identity, then
\[
G_0\text{-}\deg
\bigl(
T_{W,0}(p^*),B_\zeta(X_0)
\bigr)
=
\mathbf 1.
\]
In this case,
\[
\omega_W[p^*,0]
=
\sum_{(j,k,m)\in\mathcal C_W(p^*)}
d_{j,k,m}\rho_{j,k,m}^{W}
\deg_{\mathcal V_{j,m}}^{\,G_W}.
\]
\end{remark}

%----------------------------------------------------
\subsection{Local bifurcation theorem}
%----------------------------------------------------

\begin{theorem}[Local bifurcation for the Weyl problem]
\label{thm:weyl-local-bifurcation}
Assume the hypotheses of
Theorem~\ref{thm:weyl-local-invariant-formula}. If
\begin{equation}\label{eq:weyl-nonzero-local-invariant}
\omega_W[p^*,0]\neq0
\qquad\text{in }A_1^t(G_W),
\end{equation}
then
\[
(\alpha^*,\beta^*,0)
\]
is a bifurcation point of nontrivial \(2\pi\)-periodic solutions of the
one-sided Weyl equation. More precisely, every neighborhood of
\[
(\alpha^*,\beta^*,0)
\]
contains a solution
\[
(\alpha,\beta,x)
\]
such that
\[
x\neq0,
\qquad
\Psi_W(\alpha,\beta,x)=0.
\]

Moreover, let \((H)\) be a maximal orbit type occurring with nonzero
coefficient in
\[
\omega_W[p^*,0].
\]
Then there exist nontrivial bifurcating periodic solutions whose isotropy group contains a subgroup conjugate to \(H\). In particular, if \((H)\) is maximal among the admissible orbit types in the corresponding critical representation, then the bifurcating solutions detected by this coefficient have isotropy type \((H)\).
\end{theorem}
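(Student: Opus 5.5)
The argument rests on the existence property of the twisted equivariant degree, applied to the complemented field \(\mathfrak F_W\) on shrinking neighborhoods, followed by a refined version that keeps track of isotropy.

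\textbf{Step 1 (existence of nontrivial solutions).} Fix \(\varepsilon\) and \(\delta\) as small as allowed by Definition~\ref{def:weyl-local-invariant} and Proposition~\ref{prop:weyl-linear-reduction}. By excision and homotopy invariance, the same degree \(\omega_W[p^*,0]\) is obtained for every smaller pair \(0<\varepsilon'\le\varepsilon\), \(0<\delta'\le\delta\). The key point is to shrink \(\varepsilon\) and \(\delta\) together, keeping \(\vartheta_W\) negative on \(\{|p-p^*|=\varepsilon',\,x=0\}\) through the positive rescaling of the parameters already used in the definition.

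Since \(\omega_W[p^*,0]\neq0\), the existence property gives a zero \((p,x)\in\Omega_W\) of \(\mathfrak F_W\). Then \(\Psi_W(p,x)=0\) and \(\vartheta_W(p,x)=0\).

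Now argue by contradiction. If \(x=0\), then \(\vartheta_W(p,0)=-|p-p^*|\delta-\delta/2<0\), which is impossible. Hence \(x\neq0\), and moreover \(\|x\|_X\) is comparable to \(\delta\), since \(\vartheta_W=0\) forces \(\|x\|_X\ge\delta/(2(1+\varepsilon))\). Letting \(\varepsilon,\delta\to0\) produces nontrivial solutions in every neighborhood of \((\alpha^*,\beta^*,0)\).

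\textbf{Step 2 (symmetry).} Write \(\omega_W[p^*,0]=\sum n_H(H)\). If \(n_H\neq0\), the equivariant existence property says that \(\mathfrak F_W\) has a zero in \(\Omega_W^{H}\), the fixed-point set of \(H\). Indeed, otherwise \(\mathfrak F_W\) would be admissibly homotopic, via the equivariant extension of a nonvanishing map on the \(H\)-fixed subspace, to a map whose degree has no \((H)\)-coefficient. Here I use the recurrence formula for coefficients of the twisted degree as in \cite{AED}.

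The zero found this way satisfies \(x\in X^H\), so its isotropy \(G_x\) contains \(H\), up to conjugacy. Step 1 shows that \(x\neq0\). Repeating the argument on shrinking neighborhoods gives a sequence of such solutions converging to \((\alpha^*,\beta^*,0)\).

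If \((H)\) is maximal among the orbit types of the critical representation, I localize further. As \(\delta\to0\), the solutions \(x/\|x\|\) concentrate, after passing to a subsequence and using the compactness of Proposition~\ref{prop:weyl-compact-perturbation} together with the linearization, on the kernel of \(T_W(p^*)\). That kernel lies in the critical isotypical components \(V_{j,m}\). Since \(G_x\) is the isotropy of a point near that kernel, it is conjugate to a subgroup of an orbit type of the critical representation. Combined with \(G_x\supseteq H\) and the maximality of \((H)\), this gives \((G_x)=(H)\).

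\textbf{Main obstacle.} The delicate step is the last one: showing that the isotropy of the bifurcating solutions is controlled by the critical representation. Slice arguments only give upper semicontinuity of isotropy. To make it work, I would first restrict \(\mathfrak F_W\) to \(X^H\) and apply the ordinary degree, with respect to the Weyl group \(W(H)\), of the fixed-point map. This reduces the problem to the critical blocks in \(X^H\). The nonzero coefficient \(n_H\) then matches a nonzero \(W(H)\)-degree on solutions of exact orbit type \((H)\), because the contribution of points with strictly larger isotropy is excluded by the assumed maximality.
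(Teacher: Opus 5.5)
Your proof is correct. It rests on the same mechanism as the paper's: the nonvanishing twisted degree of $\mathfrak F_W$, together with the existence property for the $(H)$-coefficient. It differs in two useful ways.

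\textbf{First assertion.} The paper argues by contradiction, deforming $\mathfrak F_W$ to a zero-free field, while you argue directly. Your version is cleaner. Since $\vartheta_W(p,0)=-|p-p^*|\delta-\delta/2<0$ for every $p$, every zero of $\mathfrak F_W$ automatically satisfies $\delta/2\le\|x\|_X<\delta$. In particular, under the paper's contradiction hypothesis $\mathfrak F_W$ has no zeros at all, so no deformation is needed.

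\textbf{Exact-isotropy assertion.} The paper simply states that maximality of $(H)$ forces $(G_x)=(H)$. That does not follow unless one knows that $G_x$ is subconjugate to an isotropy group of the critical representation. Your localization supplies precisely this:
\begin{enumerate}
\item By compactness of $\mathcal A_W^{-1}j$ and the linearization, $x_n/\|x_n\|_X$ converges to a unit vector $y_0\in\ker T_W(p^*)\cap X^H$.
\item The slice theorem gives $(G_{x_n})\le(G_{y_0})$ for large $n$.
\item Since $H\subseteq G_{x_n}$, maximality yields $(G_{y_0})=(H)$, hence $(G_{x_n})=(H)$.
\end{enumerate}

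Consequently, the concern in your ``main obstacle'' paragraph is unfounded. Upper semicontinuity of isotropy is exactly the direction required. The $W(H)$-degree alternative you sketch would itself need the same localization to rule out solutions with larger isotropy.

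The localization does use Fr\'echet differentiability of $\Psi_W$ in $x$. That is the same assumption the paper makes in Proposition~\ref{prop:weyl-linear-reduction}.
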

\begin{remark}
    Since the bifurcation invariant is generated by nonzero temporal modes \(m\geq1\), the detected bifurcating solutions are genuinely time-dependent
and hence nonconstant.
\end{remark}
\begin{proof}
Suppose, to the contrary, that
\[
(\alpha^*,\beta^*,0)
\]
is not a bifurcation point. Then, after reducing
\(\varepsilon\) and \(\delta\) if necessary, the only zeros of
\(\Psi_W\) in \(\overline{\Omega_W}\) belong to the trivial branch.

The auxiliary scalar component separates the trivial branch from the boundary of the isolating neighborhood. Hence the complemented field may be deformed, through an admissible \(G_W\)-equivariant homotopy, to a field having no zeros in \(\Omega_W\). By the existence and homotopy properties of the twisted equivariant degree,
\[
G_W\text{-}\deg
\bigl(
\mathfrak F_W,\Omega_W
\bigr)
=
0.
\]
By Definition~\ref{def:weyl-local-invariant}, it follows that
\[
\omega_W[p^*,0]=0,
\]
contradicting
\eqref{eq:weyl-nonzero-local-invariant}. Therefore, nontrivial solutions occur arbitrarily close to the critical point.

Now let \((H)\) be a maximal orbit type occurring with nonzero coefficient in \(\omega_W[p^*,0]\). By the existence property of the twisted equivariant degree, there exists a nontrivial bifurcating solution \(x\) whose isotropy group contains a subgroup conjugate to \(H\); equivalently,
\[
(H)\leq (G_x).
\]
If \((H)\) is maximal among the admissible orbit types in the corresponding
critical representation, then necessarily
\[
(G_x)=(H).
\]
\end{proof}

%----------------------------------------------------
\section{Local Bifurcation for the Riesz Equivariant Degree Jump}
%----------------------------------------------------

Let \(\lambda^*\) be an isolated critical value of the Riesz problem, and choose \(\varepsilon>0\) such that
\[
[\lambda^*-\varepsilon,\lambda^*+\varepsilon]
\cap\Lambda_R
=
\{\lambda^*\}.
\]
Let \(B_\zeta(X)\subset X\) be a sufficiently small \(G_R\)-invariant ball centered at the origin. The local equivariant degree jump is defined by
\[
\omega_R[\lambda^*,0]
=G_R\text{-}\deg
\bigl(
\Psi_R(\lambda^*+\varepsilon,\cdot),
B_\zeta(X)\bigr)-G_R\text{-}\deg
\bigl(
\Psi_R(\lambda^*-\varepsilon,\cdot),
B_\zeta(X)\bigr).
\]
For a negative eigenvalue
\(\mu\in\sigma_-(T_{R,j,m}(\lambda))\), let
\[
\mathfrak m_{j,m}(\mu;\lambda)
\]
denote the multiplicity of the irreducible \(G_R\)-representation
\(\mathcal V_{j,m}\) in the \(\mu\)-eigenspace of
\(T_{R,j,m}(\lambda)\); equivalently,
\(\mathfrak m_{j,m}(\mu;\lambda)\) is the number of copies of
\(\mathcal V_{j,m}\) contained in that eigenspace.

Let
\[
\lambda^{*-}:=\lambda^*-\varepsilon,
\qquad
\lambda^{*+}:=\lambda^*+\varepsilon,
\]
where \(\varepsilon>0\) is sufficiently small so that
\(\lambda^*\) is the only critical parameter in
\([\lambda^{*-},\lambda^{*+}]\). By the linearization property of the equivariant degree, one has
\[
\omega_R[\lambda^*,0]=
G_R\text{-}\deg\bigl(T_R(\lambda^{*+}),B_\zeta(X)
\bigr)-G_R\text{-}\deg\bigl(T_R(\lambda^{*-}),B_\zeta(X)
\bigr).
\]

Using the Fourier--isotypical decomposition and the multiplication property
of the equivariant degree, the two degree terms admit the representations
\begin{equation}\label{eq:riesz-degree-product}
\begin{aligned}
\omega_R[\lambda^*,0]
&=\prod_{m\geq0}
\prod_{j=0}^{r}
\prod_{\mu\in\sigma_-\left(
T_{R,j,m}(\lambda^{*+})
\right)}
\left(\deg_{\mathcal V_{j,m}}^{\,G_R}
\right)^{\mathfrak m_{j,m}
\left(\mu;\lambda^{*+}
\right)}
\\
&\quad-
\prod_{m\geq0}\prod_{j=0}^{r}
\prod_{\mu\in\sigma_-\left(
T_{R,j,m}(\lambda^{*-})
\right)}
\left(\deg_{\mathcal V_{j,m}}^{\,G_R}
\right)^{\mathfrak m_{j,m}
\left(\mu;\lambda^{*-}\right)}.
\end{aligned}
\end{equation}
The products are taken in the Burnside
ring \(A(G_R)\).

%----------------------------------------------------
\subsection{Local bifurcation for the Riesz problem}
%----------------------------------------------------
\begin{theorem}[Local bifurcation for the Riesz problem]
\label{thm:riesz-local-bifurcation}
Assume that \(g\) is sufficiently smooth and \(G_R\)-equivariant, that
\[
g(\lambda,0)=0,
\]
and that \(\lambda^*\) is an isolated critical value. If
\[
\omega_R[\lambda^*,0]\neq0
\qquad\text{in }A(G_R),
\]
then
\[
(\lambda^*,0)
\]
is a bifurcation point of nontrivial \(2\pi\)-periodic solutions of the Riesz equation. More precisely, every neighborhood of
\[
(\lambda^*,0)
\]
contains a solution
\[
(\lambda,x)
\]
such that
\[
x\neq0,
\qquad
\Psi_R(\lambda,x)=0.
\]

Moreover, let \((H)\) be a maximal orbit type occurring with nonzero
coefficient in
\[
\omega_R[\lambda^*,0].
\]
Then there exist nontrivial bifurcating periodic solutions whose isotropy group contains a subgroup conjugate to \(H\). In particular, if \((H)\) is maximal among the admissible orbit types in the corresponding critical
representation, then the bifurcating solutions detected by this coefficient have isotropy type \((H)\).
\end{theorem}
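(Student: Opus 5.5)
The plan is to argue by contradiction, using the homotopy invariance of the Leray--Schauder $G_R$-equivariant degree in the parameter $\lambda$, in the spirit of the proof of Theorem~\ref{thm:weyl-local-bifurcation}. The fixed-point map $\Psi_R$ is a $G_R$-equivariant compact perturbation of the identity. This follows from the argument of Proposition~\ref{prop:weyl-compact-perturbation}, with $\mathcal A_R^{-1}$ in place of $\mathcal A_W^{-1}$ and the $N_g$ version of Proposition~\ref{prop:nemytskii-L2-continuity}. So the degree $G_R\text{-}\deg(\Psi_R(\lambda,\cdot),B_\zeta(X))$ is defined in $A(G_R)$ whenever $\Psi_R(\lambda,\cdot)$ has no zeros on $\partial B_\zeta(X)$.

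\textbf{Step 1.} Suppose $(\lambda^*,0)$ is not a bifurcation point. Then there are $\varepsilon_0>0$ and $\zeta_0>0$ such that $\Psi_R(\lambda,x)\neq0$ whenever $|\lambda-\lambda^*|\le\varepsilon_0$ and $0<\|x\|_X\le\zeta_0$. Shrink $\varepsilon$ so that $\varepsilon\le\varepsilon_0$, keeping $\lambda^*$ the only critical value in $[\lambda^{*-},\lambda^{*+}]$, and take $\zeta\le\zeta_0$. Then
\[
H(s,x):=\Psi_R\bigl((1-s)\lambda^{*-}+s\lambda^{*+},x\bigr),\qquad s\in[0,1],
\]
is a $G_R$-equivariant compact-perturbation homotopy with no zeros on $\partial B_\zeta(X)$. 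By homotopy invariance the two degrees in the definition of $\omega_R[\lambda^*,0]$ coincide, so $\omega_R[\lambda^*,0]=0$, which contradicts the hypothesis. Hence every neighborhood of $(\lambda^*,0)$ contains a solution $(\lambda,x)$ with $x\neq0$ and $\Psi_R(\lambda,x)=0$.

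The main technical point is checking that the two endpoint degrees are well defined on one common ball $B_\zeta(X)$. For the linearized formula \eqref{eq:riesz-degree-product} we need the linearization property at the regular values $\lambda^{*\pm}$. There $D_x\Psi_R(\lambda^{*\pm},0)=T_R(\lambda^{*\pm})$ is invertible, so $\|T_R(\lambda^{*\pm})x\|_X\ge c\|x\|_X$ for some $c>0$. Since $g\in C^2$, the remainder satisfies $o(\|x\|_X)$ uniformly near $\lambda^*$; choosing $\zeta$ small makes the straight-line homotopy to $T_R(\lambda^{*\pm})$ admissible, as in Proposition~\ref{prop:weyl-linear-reduction}. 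Only this smallness is needed for the contradiction argument, so I would fix $\zeta$ once, before running Step~1.

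\textbf{Step 2 (isotropy).} Let $(H)$ be maximal with nonzero coefficient in $\omega_R[\lambda^*,0]$. I would restrict to the $H$-fixed-point spaces $X^H$, which are invariant under $\Psi_R$ by equivariance. Replace the solution set by those solutions $x$ with $(G_x)\ge(H)$; these form the closed invariant set $G_R\cdot X^H$. Repeat the Step~1 argument for the $(H)$-coefficient alone. The recurrence formula for Burnside-ring coefficients expresses the coefficient of $(H)$ through Brouwer degrees of $\Psi_R(\lambda^{*\pm},\cdot)$ restricted to $X^H$ (on $W(H)$-orbits) together with contributions from larger orbit types. If no nontrivial solutions with isotropy $\ge(H)$ occurred near $(\lambda^*,0)$, then $\Psi_R(\lambda,\cdot)|_{X^H}$ would be admissibly homotopic across $[\lambda^{*-},\lambda^{*+}]$. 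The coefficients of $(H)$ and of all $(K)>(H)$ would then be unchanged, so the $(H)$-coefficient of the jump would vanish, a contradiction. This gives solutions $x\ne0$ arbitrarily close to $(\lambda^*,0)$ with $(H)\le(G_x)$. If $(H)$ is maximal among orbit types in the critical representation, then $G_x$ is contained in an isotropy of that representation for $x$ small. This needs a short implicit-function/slice argument: nearby solutions project nontrivially onto the critical kernel. Together with $(H)\le(G_x)$ this forces $(G_x)=(H)$.

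\textbf{Main obstacle.} I expect the delicate point to be this final identification of isotropy. It requires that solutions close to the trivial branch have isotropy bounded above by the isotropies in the kernel $\ker T_R(\lambda^*)$. I would obtain this from a Lyapunov--Schmidt reduction onto the finite-dimensional kernel, which is a finite sum of the blocks $V_{j,m}$. The reduction is $G_R$-equivariant, and a nontrivial solution has nonzero kernel component whose isotropy contains $G_x$.
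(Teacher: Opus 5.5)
Your Step~1 is the paper's proof. The paper likewise assumes \((\lambda^*,0)\) is not a bifurcation point and shrinks \(\varepsilon\) and \(\zeta\). It then uses \(\Psi_R(\lambda,\cdot)\), \(\lambda\in[\lambda^*-\varepsilon,\lambda^*+\varepsilon]\), as an admissible homotopy to force \(\omega_R[\lambda^*,0]=0\).

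Both you and the paper tacitly use that \(\omega_R\) does not change when \(\varepsilon\) is shrunk. This follows from your linearization remark together with constancy of the degree of \(T_R(\lambda)\) between regular values. Alternatively, you can bypass it by keeping \(\varepsilon\) and shrinking only \(\zeta\), since \(T_R(\lambda)\) is uniformly invertible for \(\varepsilon_0\le|\lambda-\lambda^*|\le\varepsilon\).

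The genuine difference is in the isotropy part. The paper only cites the existence property of the \(G_R\)-degree, and then asserts \((G_x)=(H)\) under maximality. Read literally, that property applied to \(\Psi_R(\lambda^{*\pm},\cdot)\) can return the trivial zero \(x=0\), whose isotropy is all of \(G_R\). To make the paper's route work, one must identify \(\omega_R[\lambda^*,0]\) with the degree of a complemented field \((\vartheta,\Psi_R)\) on \(\mathbb R\times X\), as was done with \(\mathfrak F_W\) for the Weyl problem, so that zeros are automatically nontrivial.

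Your argument works directly with the jump as defined. You show that the Leray--Schauder degrees of the restrictions to \(X^L\) are constant for all \((L)\ge(H)\), and propagate this through the recurrence formula. In passing, this shows that maximality of \((H)\) is not needed to obtain \((H)\le(G_x)\).

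Your equivariant projection \(P\) onto the kernel of \(T_R(\lambda^*)\) then supplies the justification the paper omits for \((G_x)=(H)\). The inclusion \(G_x\subseteq G_{Px}\) is immediate from equivariance of \(P\). The Lyapunov--Schmidt reduction is needed only to show \(Px\neq0\) for small nontrivial solutions.

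The paper's route is shorter once the complemented field is set up. Yours is longer but self-contained modulo the recurrence formula.
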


\begin{proof}
Suppose, to the contrary, that
\[
(\lambda^*,0)
\]
is not a bifurcation point. Then, after reducing \(\varepsilon>0\) and the radius \(\zeta>0\) if necessary, the trivial solution is the only zero of
\[
\Psi_R(\lambda,\cdot)
\]
in \(B_\zeta(X)\) for all
\[
\lambda\in[\lambda^*-\varepsilon,\lambda^*+\varepsilon].
\]
Hence the family
\[
\Psi_R(\lambda,\cdot),
\qquad
\lambda\in[\lambda^*-\varepsilon,\lambda^*+\varepsilon],
\]
defines an admissible \(G_R\)-equivariant homotopy on \(B_\zeta(X)\). By homotopy invariance of the \(G_R\)-equivariant degree,
\[
G_R\text{-}\deg
\bigl(
\Psi_R(\lambda^*+\varepsilon,\cdot),
B_\zeta(X)
\bigr)
=
G_R\text{-}\deg
\bigl(
\Psi_R(\lambda^*-\varepsilon,\cdot),
B_\zeta(X)
\bigr).
\]
Therefore,
\[
\omega_R[\lambda^*,0]=0,
\]
contradicting the hypothesis. Thus, nontrivial solutions occur arbitrarily close to \((\lambda^*,0)\).

Now let \((H)\) be a maximal orbit type occurring with nonzero coefficient in \(\omega_R[\lambda^*,0]\). By the existence property of the \(G_R\)-equivariant degree, there exists a nontrivial bifurcating solution \(x\) whose isotropy group contains a subgroup conjugate to \(H\);
equivalently,
\[
(H)\leq (G_x).
\]
If \((H)\) is maximal among the admissible orbit types in the corresponding critical representation, then necessarily
\[
(G_x)=(H).
\]
\end{proof}

%----------------------------------------------------
\subsection{Comparison of the two local invariants}
%----------------------------------------------------

The Weyl and Riesz local invariants have parallel representation-theoretic structures but arise from different spectral mechanisms. For the Weyl problem, the invariant has the additive form
\[
\omega_W=
\sum_{(j,k,m)}d_{j,k,m}\rho_{j,k,m}^{W}\deg_{\mathcal V_{j,m}}^{\,G_W},
\]
where the winding numbers measure the local orientation of complex eigenvalue maps in the two-dimensional parameter plane.

For the Riesz problem, the invariant is obtained from the difference
\[
\omega_R=\deg_{G_R}^{+}-\deg_{G_R}^{-},
\]
where the two degrees are computed on opposite sides of a real critical parameter. The change is determined by the basic equivariant degrees of the representations whose real eigenvalues cross zero.

Thus, a nonzero winding contribution produces local bifurcation in the \(\mathcal G\times\bz_2\times S^1\)-equivariant Weyl problem, whereas a nonzero equivariant degree jump produces local bifurcation in the \(\mathcal G\times\bz_2\times O(2)\)-equivariant Riesz problem.

%----------------------------------------------------
\section{Global Bifurcation}
%----------------------------------------------------

Because the Weyl problem depends on two real parameters, its global continuation is formulated directly in the two-dimensional parameter plane using the twisted equivariant Rabinowitz alternative. The Riesz problem is a one-parameter problem and admits the usual equivariant global bifurcation alternative.

For the Weyl problem, define
\[
\mathcal S_W
=
\left\{
(\alpha,\beta,x)\in\mathbb R^2\times X:
\Psi_W(\alpha,\beta,x)=0
\right\},
\qquad
\mathcal S_W^*
=
\left\{
(\alpha,\beta,x)\in\mathcal S_W:
x\neq0
\right\},
\]
and
\[
\mathcal T_W=\mathbb R^2\times\{0\}.
\]
For the Riesz problem, define
\[
\mathcal S_R=
\left\{
(\lambda,x)\in\mathbb R\times X:
\Psi_R(\lambda,x)=0
\right\},
\qquad
\mathcal S_R^*
=
\left\{
(\lambda,x)\in\mathcal S_R:
x\neq0
\right\},
\]
and
\[
\mathcal T_R=\mathbb R\times\{0\}.
\]
A global bifurcating component is a connected component of the closure of the corresponding nontrivial solution set that meets the trivial branch; see, for example, \cite{Kur,Rabinowitz1971}.
%----------------------------------------------------
\subsection{Global bifurcation for the Weyl problem}
%----------------------------------------------------

For an isolated critical point
\[
p^*=(\alpha^*,\beta^*)\in\Lambda_W,
\]
a nonempty subset
\[
\mathcal C\subset\mathcal S_W^*
\]
is called a branch of nontrivial solutions if there exists a connected
component
\[
\mathcal D
\subset
\overline{\mathcal S_W^*}
\]
such that
\[
\mathcal C=\mathcal S_W^*\cap\mathcal D.
\]
If
\[
(p^*,0)\in\mathcal D\cap\mathcal T_W,
\]
then \((p^*,0)\) is called a branching point of \(\mathcal C\).

\begin{theorem}[Equivariant Rabinowitz alternative for the Weyl problem]
\label{thm:weyl-rabinowitz-alternative}
Let
\[
U\subset\mathbb R^2\times X
\]
be an open bounded \(G_W\)-invariant set such that
\[
\partial U\cap
\bigl(\Lambda_W\times\{0\}\bigr)
=
\varnothing.
\]
Suppose that \(\mathcal C\subset\mathcal S_W^*\) is a branch of
nontrivial solutions bifurcating from an isolated critical point
\[
(p^*,0)\in
U\cap
\bigl(\Lambda_W\times\{0\}\bigr).
\]
Then at least one of the following alternatives occurs:
\begin{enumerate}
\item
\[
\mathcal C\cap\partial U\neq\varnothing;
\]

\item the set
\[
\overline{\mathcal C}
\cap
\bigl(\Lambda_W\times\{0\}\bigr)
=
\left\{
(p_0,0),\ldots,(p_N,0)
\right\}
\]
is finite, with \(p_0=p^*\), and
\[
\sum_{\ell=0}^{N}
\omega_W[p_\ell,0]
=
0
\qquad
\text{in }A_1^t(G_W).
\]
\end{enumerate}
\end{theorem}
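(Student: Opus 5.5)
We argue by contradiction in the standard Rabinowitz manner, adapted to the twisted degree with a two-dimensional parameter space. Suppose both alternatives fail, so that \(\mathcal C\cap\partial U=\varnothing\). Let \(\mathcal D\) be the connected component of \(\overline{\mathcal S_W^*}\) with \(\mathcal C=\mathcal S_W^*\cap\mathcal D\), and put \(\mathcal D_U:=\mathcal D\cap\overline U\). By Proposition~\ref{prop:weyl-compact-perturbation}, \(\Psi_W\) is a compact perturbation of the identity, so \(\mathcal S_W\cap\overline U\) is compact.

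The first step is to show that \(\overline{\mathcal C}\cap(\Lambda_W\times\{0\})\) is finite. It is compact. By the implicit function theorem, every point of \(\overline{\mathcal S_W^*}\cap\mathcal T_W\) lies in \(\Lambda_W\times\{0\}\). We then use that the critical points met by \(\mathcal C\) are isolated; this follows from the nondegeneracy of the determinant condition in Section~4 together with the finiteness of \(\mathcal C_W(p)\). Hence the set consists of finitely many points \(p_0=p^*,\dots,p_N\).

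The second step uses the Whyburn separation lemma. Since \(\mathcal D_U\) is a connected component of a compact set disjoint from \(\partial U\), we obtain an open bounded \(G_W\)-invariant set \(\mathcal O\subset U\) with the following properties:
\begin{itemize}
\item[(i)] \(\mathcal D_U\subset\mathcal O\);
\item[(ii)] \(\partial\mathcal O\cap\mathcal S_W^*=\varnothing\);
\item[(iii)] \(\mathcal O\cap(\Lambda_W\times\{0\})=\{(p_\ell,0)\}_{\ell=0}^N\);
\item[(iv)] \(\partial\mathcal O\) meets \(\mathcal T_W\) only at points \((p,0)\) with \(p\notin\Lambda_W\).
\end{itemize}
Invariance is obtained by replacing \(\mathcal O\) with \(G_W\mathcal O\); this is allowed because \(\mathcal S_W\) is invariant and \(G_W\) is compact. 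Shrinking near \(\mathcal T_W\), we may take \(\mathcal O\cap\{\|x\|<r\}\) to be \((\mathcal O_0\times B_r)\), where \(\mathcal O_0\subset\mathbb R^2\) is a bounded open set containing the \(p_\ell\).

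The third step introduces the complemented field on \(\mathcal O\):
\[
\mathfrak F(p,x)=\bigl(\vartheta(p,x),\Psi_W(p,x)\bigr),\qquad \vartheta(p,x)=\|x\|_X-r/2 .
\]
The auxiliary function \(\vartheta\) vanishes on the trivial branch part of the boundary only at regular points. Its zeros in \(\mathcal O\) are nontrivial solutions with \(\|x\|=r/2\), and none of them lies on \(\partial\mathcal O\), so the twisted degree \(G_W\text{-}\deg(\mathfrak F,\mathcal O)\) is defined.

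We evaluate this degree in two ways. First, deform \(\vartheta\) to \(\vartheta_t=\|x\|_X-r/2-t\) with \(t\) increasing. Zeros never reach \(\partial\mathcal O\) because \(\partial\mathcal O\cap\mathcal S_W^*=\varnothing\). Once \(t\) exceeds \(\sup\{\|x\|:(p,x)\in\overline{\mathcal O}\}\), there are no zeros at all, so the degree vanishes. Second, shrink \(r\): the degree localizes near \(\mathcal T_W\) on \(\mathcal O_0\times B_r\). Excision removes the neighborhoods of regular parameters, where \(T_W(p)\) is invertible and the field has no zeros once \(r\) is small. After excision and additivity, the degree splits as the sum of the degrees over the small sets \(\Omega_W\) around each \(p_\ell\). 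Each of these is homotopic to the local complemented field of Definition~\ref{def:weyl-local-invariant}, via the linear interpolation between the two auxiliary functions \(\vartheta_W\) and \(\vartheta\), which keeps the same sign pattern on the boundary. Hence the degree equals \(\sum_\ell\omega_W[p_\ell,0]\). Comparing the two computations gives \(\sum_\ell\omega_W[p_\ell,0]=0\), contradicting the failure of alternative (2).

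The main obstacle is the second step: building \(\mathcal O\) so that its trace near the trivial branch is a product \(\mathcal O_0\times B_r\) while it still isolates \(\mathcal D_U\) from the rest of \(\mathcal S_W^*\). In the two-parameter setting, \(\mathcal T_W\setminus\Lambda_W\) is not a union of intervals, so the usual one-parameter trick does not directly apply. The first thing to try is to apply Whyburn's lemma to the compact metric space \(K=(\overline{\mathcal S_W^*}\cap\overline U)\cup(\overline{\mathcal O_0}\times\{0\})\). One then uses the a priori estimate \(\|T_W(p)x\|\ge c\|x\|\) on \(\overline{\mathcal O_0}\) away from the \(p_\ell\), as in Proposition~\ref{prop:weyl-linear-reduction}, to see that no nontrivial solutions lie near \(\partial\mathcal O_0\times\{0\}\). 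This justifies the product structure.
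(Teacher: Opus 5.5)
Your architecture (Whyburn separation of the bounded component, the complementing function \(\|x\|_X-r/2\), killing the degree by raising the level \(t\), and localizing at the critical points via the convex homotopy to \(\vartheta_W\)) is the standard argument behind the paper's one-paragraph appeal to the twisted Rabinowitz alternative. Those degree-theoretic steps are sound.

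The genuine gap is your first step. Nothing in the paper implies that the critical points met by \(\overline{\mathcal C}\), other than \(p^*\), are isolated. The nondegeneracy condition of Section~4 is a hypothesis imposed at one particular pair \((\alpha^*,\beta^*)\), not a property of \(\Lambda_W\). Finiteness of \(\mathcal C_W(p)\) only bounds the number of critical modes at a fixed \(p\); it says nothing about isolation in the parameter plane.

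In fact the stationary mode is critical when \(\eta_{j,k}(\alpha,\beta)=0\). For a real eigenvalue this is a single real equation in two parameters, so \(\Lambda_W\) generically contains curves. For instance, \(B_W(\alpha,\beta)=(1-\alpha^2-\beta^2)\Id\) makes the whole unit circle critical, and constant solutions bifurcate along all of it. Nothing prevents \(\overline{\mathcal C}\) from meeting such a curve. In that case \(\omega_W[p,0]\) is undefined at its points, your property (iii) cannot be arranged, and the excision step collapses. The statement itself would also fail as written, since alternative (2) asserts finiteness.

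So this is a missing hypothesis rather than a missing argument. You must assume that every point of \((\Lambda_W\times\{0\})\cap U\) is isolated. This is what the standard twisted Rabinowitz alternative assumes, and the statement tacitly needs it for the invariants \(\omega_W[p_\ell,0]\) to be defined. Since \(\partial U\cap(\Lambda_W\times\{0\})=\varnothing\) and \(U\) is bounded, that set is then compact, hence finite. The paper's sketch glosses over the same point, so state the assumption explicitly instead of claiming to derive it.

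Two smaller remarks:
\begin{itemize}
\item To obtain (iii) from Whyburn you must work inside a small neighborhood of \(\mathcal D\) that excludes the remaining critical points; this again uses isolation. If you apply Whyburn to all of \(\overline{\mathcal S_W^*}\cap\overline U\), the separated piece may contain other components touching \(\mathcal T_W\) elsewhere.
\item The passage from \(\mathcal C\cap\partial U=\varnothing\) to \(\mathcal D\cap\partial U=\varnothing\) uses \(\mathcal D\setminus\mathcal C\subset\Lambda_W\times\{0\}\) together with the hypothesis on \(\partial U\); say so.
\end{itemize}

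Finally, the product structure you call the main obstacle is unnecessary. By (ii) and (iv), \(\partial\mathcal O\cap\overline{\mathcal S_W^*}=\varnothing\). Compactness of \(\mathcal S_W\cap\overline{\mathcal O}\) then forces solutions in \(\overline{\mathcal O}\) with \(\|x\|_X=r/2\) to concentrate near the points \((p_\ell,0)\) as \(r\to0\). Since \(\mathcal O\) is open, it already contains small products \(B_\varepsilon(p_\ell)\times B_{r'}\), and excision applies directly on these.
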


The result follows from the Rabinowitz alternative for the twisted \(G_W\)-equivariant degree applied to the two-parameter compact perturbation
\[
\Psi_W:\mathbb R^2\times X\longrightarrow X.
\]
The local contribution of each isolated critical point is precisely the bifurcation invariant
\[
\omega_W[p,0]
=
G_W\text{-}\deg
\bigl(
\mathfrak F_W,\Omega_W
\bigr).
\]
If the branch does not meet the boundary of \(U\), additivity,
excision, and homotopy invariance of the twisted equivariant degree imply that the total bifurcation index of all critical points of the trivial branch met by the component must vanish. Hence
\[
\sum_{\ell=0}^{N}
\omega_W[p_\ell,0]
=
0.
\]

%----------------------------------------------------
\subsection{Global bifurcation for the Riesz problem}
%----------------------------------------------------

Let \(\lambda^*\) be an isolated critical value and let
\(\mathcal C_R\) be the connected component of \(
\overline{\mathcal S_R^*}\)
containing \((\lambda^*,0)\).

\begin{theorem}[Equivariant Rabinowitz alternative for the Riesz problem]
\label{thm:riesz-global-bifurcation}
Assume that \(\Psi_R\) is a \(G_R\)-equivariant compact perturbation of the
identity and that
\[
\omega_R[\lambda^*,0]\neq0
\]
in \(A(G_R)\). Then \(\mathcal C_R\) is a nontrivial continuum of periodic
solutions containing \((\lambda^*,0)\). Moreover, either
\begin{enumerate}
\item \(\mathcal C_R\) is unbounded in \(\mathbb R\times X\), or
\item \(\mathcal C_R\) meets the trivial branch at another critical point
      \((\lambda_1,0)\), where \(\lambda_1\neq\lambda^*\).
\end{enumerate}
\end{theorem}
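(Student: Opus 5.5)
The plan is the classical Rabinowitz argument, run with the $G_R$-equivariant degree in place of the Leray--Schauder degree. Compactness comes from the analogue of Proposition~\ref{prop:weyl-compact-perturbation} for $\Psi_R$, and the local index is $\omega_R[\lambda^*,0]$.

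\emph{Nontriviality.} Theorem~\ref{thm:riesz-local-bifurcation} shows that $(\lambda^*,0)\in\overline{\mathcal S_R^*}$, so $\mathcal C_R$ is well defined and contains points with $x\neq0$ arbitrarily close to $(\lambda^*,0)$. Since $\Psi_R$ is a compact perturbation of the identity, bounded closed subsets of $\mathcal S_R$ are compact. Hence $\mathcal C_R$ is a continuum, and it is not reduced to $\{(\lambda^*,0)\}$.

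\emph{Alternative: argue by contradiction.} Assume $\mathcal C_R$ is bounded and $\mathcal C_R\cap\mathcal T_R=\{(\lambda^*,0)\}$. Then $\mathcal C_R$ is a compact set.

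1. Apply Whyburn's separation lemma to the compact metric space $\overline{\mathcal O}\cap\mathcal S$, where $\mathcal O$ is a bounded neighborhood of $\mathcal C_R$ and $\mathcal S=\overline{\mathcal S_R^*}\cup([\lambda^*-\varepsilon,\lambda^*+\varepsilon]\times\{0\})$. This yields a bounded open $G_R$-invariant set $U\supset\mathcal C_R$ (invariance by averaging over the compact group, or by taking $G_R$-saturations, since $\mathcal C_R$ is invariant). We arrange the following:
   \begin{itemize}
   \item $\partial U\cap\mathcal S_R^*=\varnothing$;
   \item $U\cap\mathcal T_R=(\lambda^*-\varepsilon,\lambda^*+\varepsilon)\times\{0\}$, with $\varepsilon$ as in the definition of $\omega_R$;
   \item for $|\lambda-\lambda^*|$ near $\varepsilon$, the slices $U_\lambda$ near $x=0$ are small balls $B_\zeta(X)$.
   \end{itemize}

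2. Complete $\Psi_R$ by a $G_R$-invariant auxiliary function $\theta(\lambda,x)=\|x\|-\zeta$ near the trivial branch, following the construction of $\vartheta_W$. Equivalently, we may work with the one-parameter family and use the generalized homotopy property on $U$. The slices $U_\lambda$ have no zeros of $\Psi_R(\lambda,\cdot)$ on their boundaries, except along the trivial branch, which is excised by a small ball.

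3. By homotopy invariance, $G_R\text{-}\deg(\Psi_R(\lambda,\cdot),U_\lambda)$ is constant in $\lambda$. It vanishes for $|\lambda|$ large because $U$ is bounded. By excision around the trivial solution for $\lambda=\lambda^{*\pm}$, these equalities combine into
\[
0=G_R\text{-}\deg\bigl(\Psi_R(\lambda^{*+},\cdot),B_\zeta(X)\bigr)-G_R\text{-}\deg\bigl(\Psi_R(\lambda^{*-},\cdot),B_\zeta(X)\bigr)=\omega_R[\lambda^*,0].
\]
This contradicts the hypothesis.

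The trivial branch meets $U$ only at regular parameters away from $\lambda^*$. There the degrees on small balls agree on either side of any parameter that is not critical. In particular, if $\mathcal C_R$ met the trivial branch only at non-critical points, this would be impossible by the implicit function theorem, since $D_x\Psi_R(\lambda,0)$ is invertible there. So any return point $\lambda_1$ is automatically critical.

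\emph{Main obstacle.} The delicate step is step 1 together with the bookkeeping in step 3. The set $U$ must be constructed so that its trivial-branch part is exactly a small interval around $\lambda^*$, and so that the degree on a slice equals the degree on the small ball plus the degree on the part containing nontrivial solutions. This requires careful excision so that the zero at $x=0$ is isolated on the slices $\lambda=\lambda^{*\pm}$.

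I would first try the standard device of Ize--Vignoli and Kurland, as in \cite{Kur,Rabinowitz1971}. Consider $\Phi_r(\lambda,x)=(\|x\|^2-r^2,\Psi_R(\lambda,x))$ on $U$; its $G_R$-degree is independent of $r$. For $r$ large it is zero because $U$ is bounded. For $r$ small, the zeros of $\Phi_r$ lie near the trivial branch in $U$, and the linearization reduces the degree to the jump $\omega_R[\lambda^*,0]$. The jump appears after the standard suspension identification, exactly as in the comparison of $\omega_R$ with the complemented field in the Weyl case. Equivariance is preserved throughout because all constructions are invariant under $G_R$.
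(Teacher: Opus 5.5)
Your proposal takes the same route as the paper, which only asserts that the theorem is the equivariant Rabinowitz alternative obtained from homotopy invariance and excision of the $G_R$-degree; your Whyburn separation, the invariant neighborhood $U$, and the complementing map $\Phi_r=(\|x\|^2-r^2,\Psi_R)$ supply exactly the details the paper omits. Two small corrections are needed: the slice degree $G_R\text{-}\deg(\Psi_R(\lambda,\cdot),U_\lambda)$ is not constant across $\lambda^*\pm\varepsilon$ (it is $0$ outside the trivial interval, and just inside each end it equals the unit $G_R\text{-}\deg(\Psi_R(\lambda^{*\pm},\cdot),B_\zeta(X))$ of $A(G_R)$, which constancy on the middle range then forces to coincide, evaluated slightly inside $(\lambda^*-\varepsilon,\lambda^*+\varepsilon)$), and $\mathcal C_R\neq\{(\lambda^*,0)\}$ should be deduced from the alternative itself, since local bifurcation alone does not place the nearby nontrivial solutions in the component $\mathcal C_R$.
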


This is the equivariant form of the Rabinowitz global bifurcation alternative. Its proof follows from the homotopy invariance and excision properties of the \(G_R\)-equivariant degree.

If a bounded global component meets the trivial branch at only finitely many critical values
\[
\lambda_1,\ldots,\lambda_N,
\]
then the corresponding bifurcation invariants satisfy
\[
\sum_{k=1}^{N}\omega_R[\lambda_k,0]=0
\qquad\text{in }A(G_R).
\]

%----------------------------------------------------
\subsection{Comparison of the global alternatives}
%----------------------------------------------------

The global continuation mechanisms for the Weyl and Riesz problems are parallel, although their parameter spaces are different. For the one-sided Weyl equation, the critical points lie in the two-dimensional parameter plane, and the global continuation is formulated directly in \(\mathbb R^2\times X\) through the twisted equivariant Rabinowitz alternative. Accordingly, a bounded global branch can meet only finitely many critical points, and the sum of the corresponding local Weyl bifurcation invariants must vanish. If such a cancellation is impossible, the branch is necessarily unbounded.

For the Riesz equation, the parameter space is one-dimensional. The ordinary equivariant degree jump directly yields a global continuum whose alternatives are unboundedness or return to another critical value of the trivial branch.

\begin{remark}
Theorems~\ref{thm:weyl-rabinowitz-alternative}
and~\ref{thm:riesz-global-bifurcation} show that a connected component detected by a nonzero equivariant bifurcation invariant cannot terminate inside a bounded neighborhood of the corresponding bifurcation point. If, in addition, the solution set is uniformly bounded in \(X\) whenever
the associated parameters remain in a bounded set, then any unboundedness of the component must occur in the parameter direction.

For the Weyl problem, this means that the parameter pair
\((\alpha,\beta)\) must leave every bounded subset of \(\mathbb R^2\), unless the component meets the trivial branch at other critical points whose local Weyl bifurcation invariants cancel in the sense of the global Rabinowitz alternative. For the Riesz problem, the parameter \(\lambda\) must become unbounded unless the component returns to the trivial branch at a different critical value.
\end{remark}

\section{Examples}
\subsection{An $S_4\times\bz_2\times S^1$-Equivariant One-Sided Weyl Example}

Let
\[
C_4=
\begin{pmatrix}
3&-1&-1&-1\\
-1&3&-1&-1\\
-1&-1&3&-1\\
-1&-1&-1&3
\end{pmatrix}
\]
and define the $8\times 8$ matrix
\[
\mathcal C_{\beta,\xi}
=
\begin{pmatrix}
\xi C_4&-\beta I_4\\
\beta I_4&\xi C_4
\end{pmatrix}.
\]
We consider the $2\pi$-periodic problem
\begin{equation}\label{eq:S4-one-sided-Weyl} D_{W,+}^{q}x(t) = \alpha x(t) +\mathcal C_{\beta,\xi}x(t) -\kappa \frac{\lVert x(t)\rVert^2} {1+\lVert x(t)\rVert^2}\,x(t), \qquad x(t)\in\mathbb R^8. 
\end{equation}
where
\[
\kappa, \xi>0,
\qquad
\alpha,\beta\in\mathbb R.
\]

Write
\[
x=\begin{pmatrix}
u\\v
\end{pmatrix},
\qquad
u,v\in\mathbb R^4.
\]
Then \eqref{eq:S4-one-sided-Weyl} is equivalent to
\[
\begin{aligned}
D_{W,+}^{q}u
&=
\alpha u+\xi C_4u-\beta v
-\kappa
\frac{\lVert u\rVert^2+\lVert v\rVert^2}
{1+\lVert u\rVert^2+\lVert v\rVert^2}\,u,\\
D_{W,+}^{q}v
&=
\alpha v+\xi C_4v+\beta u
-\kappa
\frac{\lVert u\rVert^2+\lVert v\rVert^2}
{1+\lVert u\rVert^2+\lVert v\rVert^2}\,v.
\end{aligned}
\]

For each \(s\in S_4\), let \(P_s\) denote the corresponding permutation matrix acting on \(\mathbb R^4\). The induced action of \(S_4\) on
\[
\mathbb R^8=\mathbb R^4\oplus\mathbb R^4
\]
is represented by
\[
\Pi(s):=
\begin{pmatrix}
P_s&0\\
0&P_s
\end{pmatrix}.
\]
Since
\[
C_4P_s=P_sC_4,
\]
it follows that
\[
\mathcal C_{\beta,\xi}\Pi(s)=
\Pi(s)\mathcal C_{\beta,\xi}.
\]
Hence, \(\mathcal C_{\beta,\xi}\) commutes with the induced \(S_4\)-action
on \(\mathbb R^8\).

The $\bz_2$-action is
\[
x\longmapsto -x,
\]
and the $S^1$-action is given by time translation,
\[
(\theta x)(t)=x(t+\theta).
\]
Consequently, \eqref{eq:S4-one-sided-Weyl} is equivariant under
\[
G_W=S_4\times\bz_2\times S^1.
\]

The natural permutation representation of $S_4$ on $\mathbb R^4$
decomposes as
\[
\mathbb R^4=V_0\oplus V_1,
\]
where
\[
V_0=\operatorname{span}\{(1,1,1,1)^T\}
\]
is the trivial representation and
\[
V_1=\left\{w\in\mathbb R^4:
w_1+w_2+w_3+w_4=0\right\}
\]
is the three-dimensional standard representation.

Thus the spatial eigenvalues of $C_4$ are
\[
\ell_0=0
\]
with multiplicity one and
\[
\ell_1=4
\]
with multiplicity three.

Accordingly,
\[
\mathbb R^8
=(V_0\oplus V_0)
\oplus (V_1\oplus V_1).
\]

The matrix $\alpha I_8+\mathcal C_{\beta,\xi}$ reduces to
\[
\begin{pmatrix}
\alpha+\xi\ell_j&-\beta\\
\beta&\alpha+\xi\ell_j
\end{pmatrix}.
\]
Therefore, on the trivial spatial component,
\[
\eta_0^\pm=\alpha\pm i\beta,
\]
each with complex multiplicity one, while on the standard component,
\[
\eta_1^\pm=\alpha+4\xi\pm i\beta,
\]
each with complex multiplicity three.

Hence, by~\eqref{eq:critical}, for the trivial spatial component, the critical parameter values are
\[
\alpha_{0,m}=m^q\cos\left(\frac{\pi q}{2}\right),
\qquad
\beta_{0,m}=m^q\sin\left(\frac{\pi q}{2}\right).
\]
For the standard spatial component, they are
\[
\alpha_{1,m}=m^q\cos\left(\frac{\pi q}{2}\right)-4\xi,
\qquad
\beta_{1,m}=m^q\sin\left(\frac{\pi q}{2}\right).
\]

For example, let
\[
q=\frac12,
\qquad
\xi=\frac14,
\qquad
\kappa=1.
\]

Since
\[
m^q=\sqrt m
\]
and
\[
\cos\left(\frac{\pi q}{2}\right)
=
\sin\left(\frac{\pi q}{2}\right)
=
\frac1{\sqrt2},
\]
the critical parameter pair is
\[
\left(
\sqrt{\frac m2}-\frac{\ell_j}{4},
\sqrt{\frac m2}
\right).
\]
The critical parameter pairs for \(m=0,1,2,3\) are listed in Table~\ref{tab:weyl-critical-pairs}.

\begin{table}[H]
\centering
\caption{Critical parameter pairs for \(m=0,1,2,3\).}
\label{tab:weyl-critical-pairs}
\renewcommand{\arraystretch}{1.35}
\setlength{\tabcolsep}{10pt}
\begin{tabular}{ccccc}
\toprule
Temporal mode
& Spatial component
& \(\ell\)
& \(\alpha_{j,m}^*\)
& \(\beta_{j,m}^*\)\\
\midrule

\multirow{2}{*}{\(m=0\)}
& Trivial component \(V_0\)
& \(0\)
& \(0\)
& \(0\)\\

& Standard component \(V_1\)
& \(4\)
& \(-1\)
& \(0\)\\
\midrule

\multirow{2}{*}{\(m=1\)}
& Trivial component \(V_0\)
& \(0\)
& \(\dfrac{1}{\sqrt2}\)
& \(\dfrac{1}{\sqrt2}\)\\

& Standard component \(V_1\)
& \(4\)
& \(\dfrac{1}{\sqrt2}-1\)
& \(\dfrac{1}{\sqrt2}\)\\
\midrule

\multirow{2}{*}{\(m=2\)}
& Trivial component \(V_0\)
& \(0\)
& \(1\)
& \(1\)\\

& Standard component \(V_1\)
& \(4\)
& \(0\)
& \(1\)\\
\midrule

\multirow{2}{*}{\(m=3\)}
& Trivial component \(V_0\)
& \(0\)
& \(\sqrt{\dfrac32}\)
& \(\sqrt{\dfrac32}\)\\

& Standard component \(V_1\)
& \(4\)
& \(\sqrt{\dfrac32}-1\)
& \(\sqrt{\dfrac32}\)\\

\bottomrule
\end{tabular}
\end{table}

At each critical pair with \(m\geq0\), one has
\[
D_{(\alpha,\beta)}
\begin{pmatrix}
u_{j,m}\\[1mm]
v_{j,m}
\end{pmatrix}
(\alpha_{j,m}^*,\beta_{j,m}^*)
=
\begin{pmatrix}
-1&0\\
0&-1
\end{pmatrix}.
\]
Therefore,
\[
\operatorname{sgn}\det
D_{(\alpha,\beta)}
\begin{pmatrix}
u_{j,m}\\[1mm]
v_{j,m}
\end{pmatrix}
(\alpha_{j,m}^*,\beta_{j,m}^*)
=1.
\]
Thus every critical pair with \(m\geq1\) has winding number \(1\). 

For each critical parameter pair
\[
p_{j,m}^*
=
(\alpha_{j,m}^*,\beta_{j,m}^*),
\qquad m\geq1,
\]
the stationary block
\[
T_{W,0}(p_{j,m}^*)
=
T_W(p_{j,m}^*)\big|_{X_0}
\]
is an isomorphism. Moreover, in the present example,
\(T_{W,0}(p_{j,m}^*)\) belongs to the identity component of the space of
\(G_0\)-equivariant linear isomorphisms of \(X_0\). Hence it is
\(G_0\)-equivariantly homotopic to \(\Id_{X_0}\) through linear
isomorphisms.

By the homotopy invariance and normalization properties of the equivariant
degree,
\[
G_0\text{-}\deg
\left(
T_{W,0}(p_{j,m}^*),B_\zeta(X_0)
\right)
=
G_0\text{-}\deg
\left(
\Id_{X_0},B_\zeta(X_0)
\right)
=
\mathbf{1}_{A(G_0)}.
\]

Consequently, the stationary factor in the local Weyl bifurcation invariant
is the multiplicative identity and does not change the dynamic contribution.
Therefore,
\[
\omega_W[p_{j,m}^*,0]
=
G_W\text{-}\deg
\left(
\mathfrak L_W^+,\Omega_W^+
\right).
\]

Therefore, the corresponding dynamic contributions to the local Weyl
bifurcation invariant are
\[
\omega_W[p_{0,m}^*,0]
=\deg_{\cV_{0,m}}^{\,G_W}
\]
and
\[
\omega_W[p_{1,m}^*,0]
=\deg_{\cV_{1,m}}^{\,G_W}.
\]
Using GAP, we obtain the following local Weyl bifurcation invariants. The corresponding amalgamated subgroup notation and group-theoretic conventions are collected in the appendix; further details may be found in~\cite{AED,Dab1}.

\[
\begin{aligned}
\omega_W[p_{0,1}^*,0]
&=
\left(
\amal{S_4^p}{S_4}{}{\bz_1}{\bz_2}
\right),\\
\omega_W[p_{0,2}^*,0]
&=
\left(
\amal{S_4^p}{S_4}{}{\bz_2}{\bz_4}
\right),\\
\omega_W[p_{0,3}^*,0]
&=
\left(
\amal{S_4^p}{S_4}{}{\bz_3}{\bz_6}
\right).
\end{aligned}
\]

For the standard spatial component, we have
\[
\begin{aligned}
\omega_W[p_{1,1}^*,0]
={}&
-\left(
\amal{D_1^p}{D_1}{}{\bz_1}{\bz_2}
\right)
-\left(
\amal{\bz_2^p}{\bz_2^m}{}{\bz_1}{\bz_2}
\right)\\
&+
\left(
\amal{\bz_3^p}{\bz_1}{}{\bz_1}{\bz_6}
\right)
+
\left(
\amal{D_2^p}{D_2^m}{}{\bz_1}{\bz_2}
\right)\\
&+
\left(
\amal{\bz_4^p}{\bz_2^m}{}{\bz_1}{\bz_4}
\right)
+
\left(
\amal{D_3^p}{D_3}{}{\bz_1}{\bz_2}
\right)\\
&+
\left(
\amal{D_4^p}{D_4^z}{}{\bz_1}{\bz_2}
\right),
\end{aligned}
\]
\[
\begin{aligned}
\omega_W[p_{1,2}^*,0]
={}&
-\left(
\amal{D_1^p}{D_1}{}{\bz_2}{\bz_4}
\right)
-\left(
\amal{\bz_2^p}{\bz_2^m}{}{\bz_2}{\bz_4}
\right)\\
&+
\left(
\amal{\bz_3^p}{\bz_1}{}{\bz_2}{\bz_{12}}
\right)
+
\left(
\amal{D_2^p}{D_2^m}{}{\bz_2}{\bz_4}
\right)\\
&+
\left(
\amal{\bz_4^p}{\bz_2^m}{}{\bz_2}{\bz_8}
\right)
+
\left(
\amal{D_3^p}{D_3}{}{\bz_2}{\bz_4}
\right)\\
&+
\left(
\amal{D_4^p}{D_4^z}{}{\bz_2}{\bz_4}
\right),
\end{aligned}
\]
and
\[
\begin{aligned}
\omega_W[p_{1,3}^*,0]
={}&
-\left(
\amal{D_1^p}{D_1}{}{\bz_3}{\bz_6}
\right)
-\left(
\amal{\bz_2^p}{\bz_2^m}{}{\bz_3}{\bz_6}
\right)\\
&+
\left(
\amal{\bz_3^p}{\bz_1}{}{\bz_3}{\bz_{18}}
\right)
+
\left(
\amal{D_2^p}{D_2^m}{}{\bz_3}{\bz_6}
\right)\\
&+
\left(
\amal{\bz_4^p}{\bz_2^m}{}{\bz_3}{\bz_{12}}
\right)
+
\left(
\amal{D_3^p}{D_3}{}{\bz_3}{\bz_6}
\right)\\
&+
\left(
\amal{D_4^p}{D_4^z}{}{\bz_3}{\bz_6}
\right).
\end{aligned}
\]

The maximal orbit types occurring in each local bifurcation invariant are summarized in Table~\ref{tab:maximal-orbit-types}.
\begin{table}[H]
\centering
\caption{Maximal orbit types associated with the critical parameter points.}
\label{tab:maximal-orbit-types}
\renewcommand{\arraystretch}{1.45}
\setlength{\tabcolsep}{8pt}

\begin{tabularx}{\textwidth}{
    >{\centering\arraybackslash}p{2.2cm}
    >{\centering\arraybackslash}X
}
\toprule
\textbf{Critical point} & \textbf{Maximal orbit types} \\
\midrule

$p_{0,1}^{*}$
&
$\left(
\amal{S_4^p}{S_4}{}{\bz_1}{\bz_2}
\right)$
\\
\midrule

$p_{0,2}^{*}$
&
$\left(
\amal{S_4^p}{S_4}{}{\bz_2}{\bz_4}
\right)$
\\
\midrule

$p_{0,3}^{*}$
&
$\left(
\amal{S_4^p}{S_4}{}{\bz_3}{\bz_6}
\right)$
\\
\midrule

$p_{1,1}^{*}$
&
\(\begin{aligned}
&\left(
\amal{\bz_3^p}{\bz_1}{}{\bz_1}{\bz_6}
\right)
\qquad
\left(
\amal{D_2^p}{D_2^m}{}{\bz_1}{\bz_2}
\right)
\qquad
\left(
\amal{\bz_4^p}{\bz_2^m}{}{\bz_1}{\bz_4}
\right)
\\
&\left(
\amal{D_3^p}{D_3}{}{\bz_1}{\bz_2}
\right)
\qquad
\left(
\amal{D_4^p}{D_4^z}{}{\bz_1}{\bz_2}
\right)
\end{aligned}\)
\\[2mm]
\midrule

$p_{1,2}^{*}$
&
\(\begin{aligned}
&\left(
\amal{\bz_3^p}{\bz_1}{}{\bz_2}{\bz_{12}}
\right)
\qquad
\left(
\amal{D_2^p}{D_2^m}{}{\bz_2}{\bz_4}
\right)
\qquad
\left(
\amal{\bz_4^p}{\bz_2^m}{}{\bz_2}{\bz_8}
\right)
\\
&\left(
\amal{D_3^p}{D_3}{}{\bz_2}{\bz_4}
\right)
\qquad
\left(
\amal{D_4^p}{D_4^z}{}{\bz_2}{\bz_4}
\right)
\end{aligned}\)
\\[2mm]
\midrule

$p_{1,3}^{*}$
&
\(\begin{aligned}
&\left(
\amal{\bz_3^p}{\bz_1}{}{\bz_3}{\bz_{18}}
\right)
\qquad
\left(
\amal{D_2^p}{D_2^m}{}{\bz_3}{\bz_6}
\right)
\qquad
\left(
\amal{\bz_4^p}{\bz_2^m}{}{\bz_3}{\bz_{12}}
\right)
\\
&\left(
\amal{D_3^p}{D_3}{}{\bz_3}{\bz_6}
\right)
\qquad
\left(
\amal{D_4^p}{D_4^z}{}{\bz_3}{\bz_6}
\right)
\end{aligned}\)
\\

\bottomrule
\end{tabularx}
\end{table}
The one-sided Weyl derivative was approximated by a periodic Grünwald--Letnikov scheme using \(N=64\) grid points on \([0,2\pi]\). Hence, the step size was
\[
h=\frac{2\pi}{64}=\frac{\pi}{32}\approx0.0982.
\]
The resulting nonlinear algebraic system was solved by the Broyden method. A representative numerical periodic solution is shown in Figure~\ref{fig:S4-Weyl-periodic-solution}, while the corresponding bifurcation diagram is presented in Figure~\ref{fig:S4-Weyl-Hq-bifurcation}.

\begin{figure}[H]
    \centering
    \includegraphics[width=0.72\textwidth]{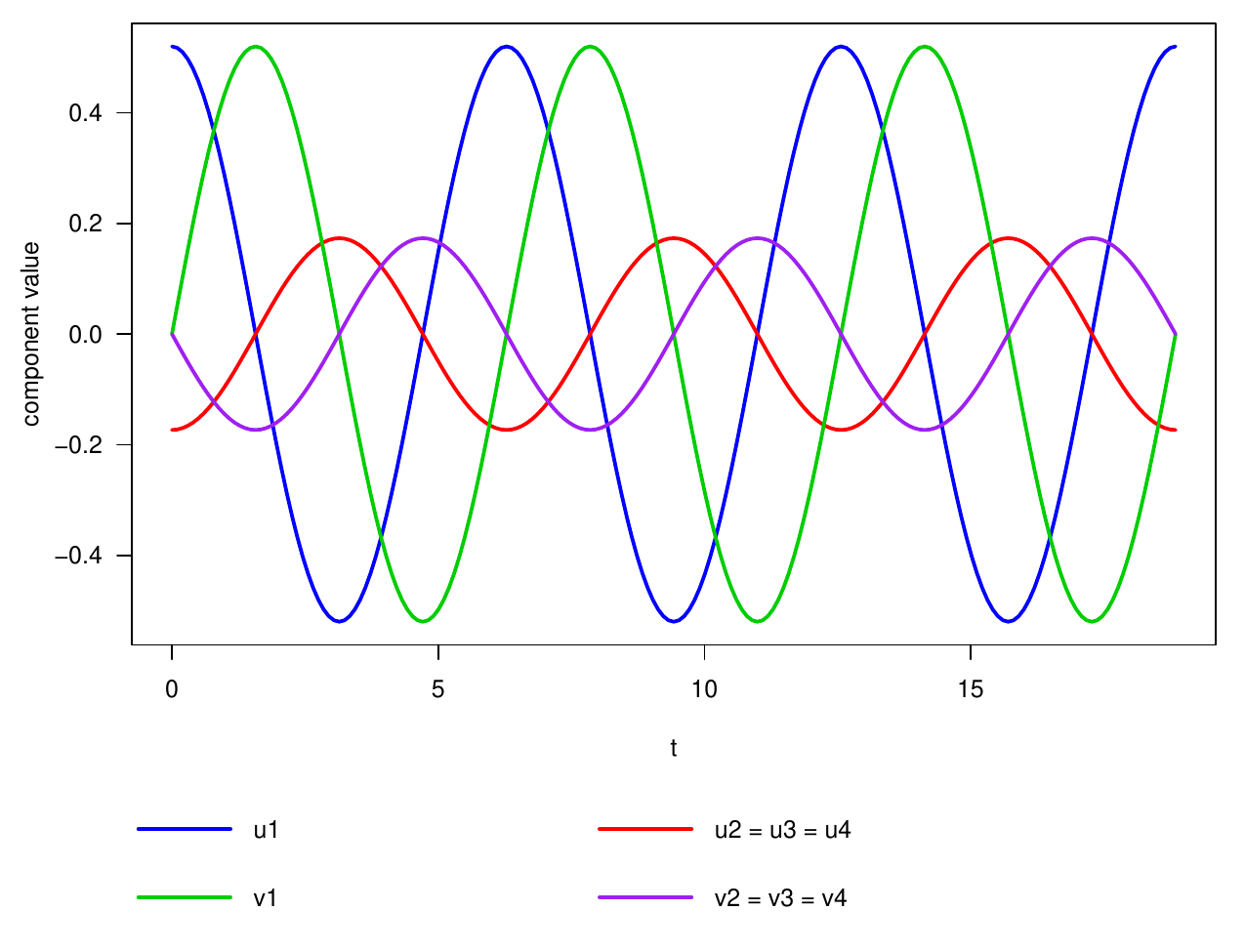}
    \caption{Numerical periodic solution of the eight-dimensional
    \(S_4\)-equivariant Weyl system. The components occur in
    symmetry-related pairs.}
    \label{fig:S4-Weyl-periodic-solution}
\end{figure}

\begin{figure}[H]
    \centering
    \includegraphics[width=0.72\textwidth]{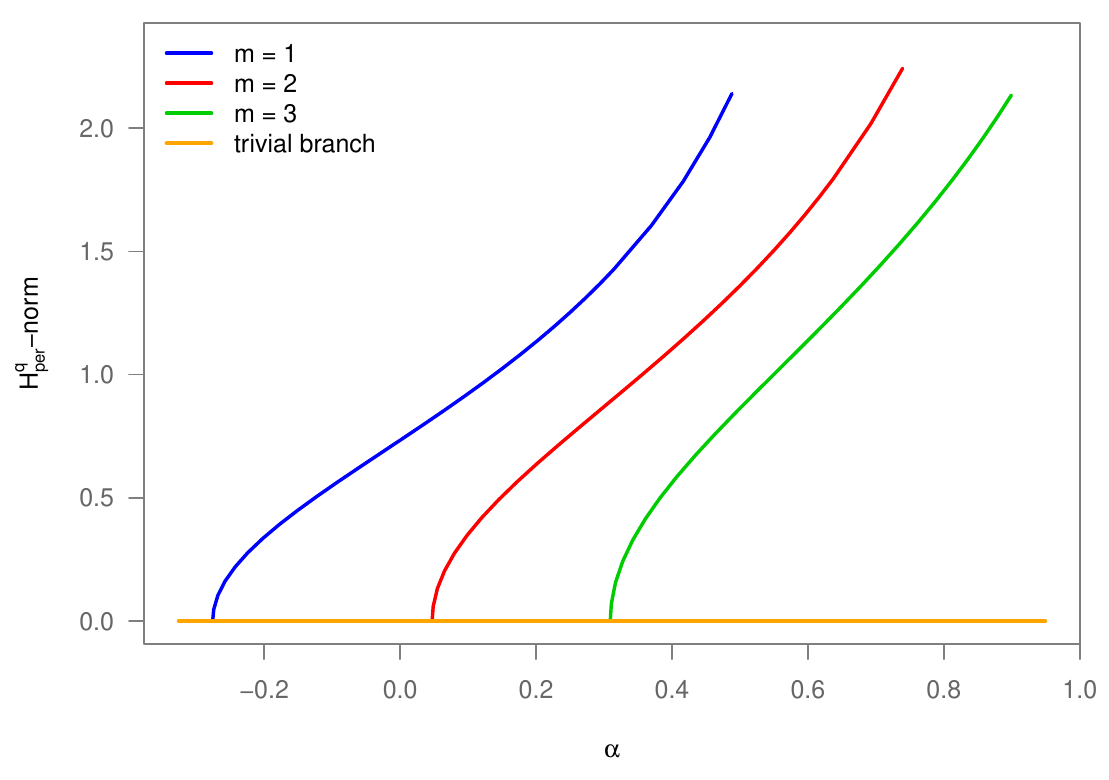}
\caption{\(H_{\mathrm{per}}^{q}\)-norm bifurcation diagram for the \(S_4\)-equivariant Weyl system, showing the trivial branch and the nontrivial branches for \(m=1,2,3\).}
    \label{fig:S4-Weyl-Hq-bifurcation}
\end{figure}

%----------------------------------------------------
\subsection{Periodic Riesz equation with \(S_4\)-symmetry}
%----------------------------------------------------

Consider
\[
(-\partial_t^2)^{1/4}x(t)
=\lambda x(t)
+\frac14 C_4x(t)-
\frac{\lVert x(t)\rVert^2}
{1+\lVert x(t)\rVert^2}\,x(t),
\qquad x(t)\in\mathbb R^4.
\]

where
\[
C_4=
\begin{pmatrix}
3&-1&-1&-1\\
-1&3&-1&-1\\
-1&-1&3&-1\\
-1&-1&-1&3
\end{pmatrix}.
\]
As in the preceding example, \(S_4\) acts by coordinate permutations,
\(\bz_2\) acts by \(x\mapsto-x\), and \(O(2)\) acts on the temporal
variable. Hence,
\[
G_R=S_4\times \bz_2\times O(2).
\]

Recall that
\[
\mathbb R^4=V_0\oplus W,
\qquad
C_4|_{V_0}=0,
\qquad
C_4|_W=4I_W.
\]
Let
\[
\ell_0=0,
\qquad
\ell_1=4
\]
denote the corresponding spatial eigenvalues. Since
\[
(-\partial_t^2)^{1/4}e^{imt}
=
|m|^{1/2}e^{imt},
\]
the characteristic value on the spatial-temporal block
\((j,m)\) is
\[
\Delta_{j,m}^{R}(\lambda)
=
|m|^{1/2}-\lambda-\frac{\ell_j}{4}.
\]
Therefore, the critical parameter values are
\[
\lambda_{j,m}^{*}
=|m|^{1/2}-\frac{\ell_j}{4}.
\]
In particular,
\[
\lambda_{0,m}^{*}=\sqrt{|m|}
\]
on the trivial spatial component and
\[
\lambda_{1,m}^{*}=\sqrt{|m|}-1
\]
on the standard component \(W\).

For the first three nonzero temporal modes, the critical values on \(W\) are
\[
\lambda_{1,1}^{*}=0,
\qquad
\lambda_{1,2}^{*}=\sqrt2-1,
\qquad
\lambda_{1,3}^{*}=\sqrt3-1.
\]
For \(m=0\), one obtains
\[
\lambda_{0,0}^{*}=0,
\qquad
\lambda_{1,0}^{*}=-1,
\]
which correspond to stationary spatial modes.
Let
\[
\lambda_{1,2}^{-}<\lambda_{1,2}^{*}
<\lambda_{1,2}^{+},
\qquad
\lambda_{1,2}^{*}=\sqrt2-1,
\]
be sufficiently close to \(\lambda_{1,2}^{*}\), so that no other critical
value lies in
\([\lambda_{1,2}^{-},\lambda_{1,2}^{+}]\).
The signs of the eigenvalues of
\(D_x\Psi_R(\lambda_{1,2}^{-},0)\) and
\(D_x\Psi_R(\lambda_{1,2}^{+},0)\) on the corresponding
spatio-temporal components are displayed in
Tables~\ref{tab:Riesz-lambda12-minus} and
\ref{tab:Riesz-lambda12-plus}.

\begin{table}[H]
\centering
\footnotesize

\begin{minipage}[t]{0.47\textwidth}
\centering
\caption{Signs at \(\lambda_{1,2}^{-}\)}
\label{tab:Riesz-lambda12-minus}
\begin{tabularx}{\textwidth}{lYY}
\toprule
\multicolumn{3}{c}{
\(\mu_{j,m}^{R}(\lambda_{1,2}^{-})\)
}\\
\midrule
\(m\setminus j\) & \(0\) & \(1\) \\
\midrule
\(0\)       & \(-\) & \(-\) \\
\(1\)       & \(+\) & \(-\) \\
\(2\)       & \(+\) & \(+\) \\
\(m\geq 3\) & \(+\) & \(+\) \\
\bottomrule
\end{tabularx}
\end{minipage}
\hfill
\begin{minipage}[t]{0.47\textwidth}
\centering
\caption{Signs at \(\lambda_{1,2}^{+}\)}
\label{tab:Riesz-lambda12-plus}
\begin{tabularx}{\textwidth}{lYY}
\toprule
\multicolumn{3}{c}{
\(\mu_{j,m}^{R}(\lambda_{1,2}^{+})\)
}\\
\midrule
\(m\setminus j\) & \(0\) & \(1\) \\
\midrule
\(0\)       & \(-\) & \(-\) \\
\(1\)       & \(+\) & \(-\) \\
\(2\)       & \(+\) & \(-\) \\
\(m\geq 3\) & \(+\) & \(+\) \\
\bottomrule
\end{tabularx}
\end{minipage}

\end{table}
\normalsize
By \eqref{eq:riesz-degree-product}, the equivariant degrees on the two sides of the
critical value
\[
\lambda_{1,2}^{*}=\sqrt{2}-1
\]
are given by
\[
\begin{aligned}
\eqdeg{G_R}\!\left(
D_x\Psi_R(\lambda_{1,2}^{-},0),B_\zeta(X)
\right)
={}&
\deg_{\mathcal V_{0,0}}^{G_R}
\cdot
\deg_{\mathcal V_{1,0}}^{G_R}
\cdot
\deg_{\mathcal V_{1,1}}^{G_R},
\\
\eqdeg{G_R}\!\left(
D_x\Psi_R(\lambda_{1,2}^{+},0),B_\zeta(X)
\right)
={}&
\deg_{\mathcal V_{0,0}}^{G_R}
\cdot
\deg_{\mathcal V_{1,0}}^{G_R}
\cdot
\deg_{\mathcal V_{1,1}}^{G_R}
\cdot
\deg_{\mathcal V_{1,2}}^{G_R},
\end{aligned}
\]
where
\[
G_R=S_4\times \bz_2\times O(2).
\]
Indeed, only the critical block \(\mathcal V_{1,2}\) changes sign as
\(\lambda\) passes through \(\lambda_{1,2}^{*}\).

After completing the computation, we obtain the following maximal orbit types. The full list of orbit-type contributions is provided in the appendix.
\[
\left(\amal{D_3^p}{\bz_1}{}{\bz_2}{D_{12}}\right),
\left(\amal{D_4^p}{\bz_2^m}{}{\bz_2}{D_8}\right),
\left(\amal{D_2^p}{D_2^m}{}{D_2}{D_4}\right),
\left(\amal{D_3^p}{D_3}{}{D_2}{D_4}\right),
\left(\amal{D_4^p}{D_4^z}{}{D_2}{D_4}\right).
\]
Consequently, for each of these maximal orbit types, there exists at least one branch of nonconstant \(2\pi\)-periodic solutions bifurcating from the critical point \((\lambda_{1,2}^*,0)\).

\subsection{Comparison}

The Riesz equation and the one-sided Weyl equation differ mainly in their spectral structures. For the Riesz problem, the temporal multipliers are real and nonnegative, and the linearized blocks are self-adjoint. Hence, critical values are determined by a single real equation, and the equivariant degree can be computed from the signs of the characteristic eigenvalues.

For the one-sided Weyl problem, the multipliers \((im)^q\) are generally complex, so the linearized blocks are nonself-adjoint. A nonzero temporal mode becomes critical only when both the real and imaginary parts vanish. This naturally leads to a two-parameter formulation and a degree calculation based on winding numbers.

Thus, the Riesz example provides a simpler self-adjoint setting, whereas the Weyl example illustrates equivariant bifurcation for nonself-adjoint problems with complex characteristic values.

\section{Conclusion}\label{sec:conclusion}

In this paper, we developed an equivariant bifurcation framework for periodic fractional differential equations governed by the one-sided Weyl and Riesz fractional operators. Although both problems admit a common formulation as equivariant compact perturbations of the identity, their spectral structures lead to different bifurcation mechanisms. The Fourier and spatial isotypical decompositions reduce the linearized problems to finite-dimensional spatio-temporal blocks and identify the relevant critical modes and symmetries.

For the one-sided Weyl equation, the nonzero temporal Fourier multipliers are generally complex and the linearized operators are nonself-adjoint. Criticality therefore requires the simultaneous vanishing of the real and imaginary parts of a complex characteristic function, naturally leading to a two-parameter problem. The local invariant combines winding numbers with basic equivariant degrees and detects nonconstant periodic solutions together with their possible symmetries. The twisted equivariant Rabinowitz alternative then describes the global continuation of the resulting branches.

For the Riesz equation, the temporal multipliers are real and the relevant linearized blocks are self-adjoint. Criticality is determined by real spectral crossings in a one-dimensional parameter space, and the local invariant is given by the jump of the ordinary equivariant degree across an isolated critical value. A nonzero degree jump yields bifurcation of nontrivial periodic solutions, while the Burnside-ring components identify possible spatio-temporal isotropy types. The corresponding global alternative gives either unbounded continuation or a return to the trivial branch at another critical value.

The examples illustrate these two mechanisms in concrete symmetric fractional systems. In particular, the \(S_4\)-equivariant one-sided Weyl example demonstrates the role of complex characteristic values and winding numbers, while the Riesz examples exhibit the real spectral-crossing mechanism. The numerical computations for the Weyl system further illustrate periodic solutions associated with different temporal modes. Overall, the results provide a unified equivariant framework that preserves the essential distinction between the nonself-adjoint Weyl and self-adjoint Riesz settings.

Future work may extend the present framework to broader classes of nonlocal and fractional evolution equations, including systems with delay, distributed memory, or more general pseudo-differential operators. It would also be interesting to investigate higher-dimensional parameter families and to develop equivariant degree methods for more complicated symmetry groups and noncompact settings.

\appendix
\section{Notation for Amalgamated Subgroups}
\label{app:amalgamated}

For completeness, we describe the notation used for subgroups of a direct
product. Let \(G_1\) and \(G_2\) be groups, and choose subgroups
\[
K_1\leq G_1,
\qquad
K_2\leq G_2.
\]
Suppose that \(L\) is a group for which there exist surjective homomorphisms
\[
\varphi:K_1\to L,
\qquad
\psi:K_2\to L.
\]
The corresponding fiber product is defined by
\[
K_1\,{}^{\varphi}\!\times_L^{\psi}K_2
:=
\left\{
(g_1,g_2)\in K_1\times K_2:
\varphi(g_1)=\psi(g_2)
\right\}.
\]
We refer to this subgroup as the amalgamated product of \(K_1\) and \(K_2\)
over \(L\).

In other words, only those pairs whose components determine the same element
of the common quotient \(L\) are retained. Consequently, this subgroup is
typically a proper subgroup of \(K_1\times K_2\).

Let
\[
N_1:=\ker\varphi,
\qquad
N_2:=\ker\psi.
\]
Then
\[
K_1/N_1\simeq L\simeq K_2/N_2,
\]
and the same subgroup can be written in the equivalent form
\[
K_1\,{}^{N_1}\!\times_L^{N_2}K_2
=
\left\{
(g_1,g_2)\in K_1\times K_2:
\varphi(g_1)=\psi(g_2)
\right\}.
\]
Equivalently, the cosets \(g_1N_1\) and \(g_2N_2\) correspond under the chosen identifications with \(L\). By Goursat's lemma, every subgroup of a direct product admits such a description after suitable choices of projections, kernels, and quotient maps.

In the applications considered here, the first factor is generally a finite symmetry group \(\mathcal G\), whereas the second factor is a subgroup of \(O(2)\). We write
\[
\bigl(\amal{K}{Z}{L}{R}{Q}\bigr),
\]
where \(K\leq\mathcal G\), \(Q\leq O(2)\), and \(Z\) denotes the kernel of the epimorphism from \(K\) onto \(L\). If \(L\) is dihedral, \(R\) specifies the preimage in \(Q\) of the rotation subgroup of \(L\). 
\section{Subgroups of \(S_4\times\bz_2\)}
\label{app:S4Z2-subgroups}
Let
\[
G_1=S_4\times \bz_2.
\]
We realize \(G_1\) as a permutation group on
\(\{1,2,3,4,5,6\}\), where \(S_4\) acts on
\(\{1,2,3,4\}\) and the nontrivial element of the second factor is
\[
\tau:=(5,6).
\]
Thus, a permutation \(\sigma\in S_4\) represents \((\sigma,1)\), while
\(\sigma\tau\) represents \((\sigma,-1)\).

We use \(\bz_n\) for the cyclic group of order \(n\), \(V_4\) for the Klein
four group, and \(D_n\) for the dihedral group of order \(2n\). In
particular,
\[
D_1\cong \bz_2,\qquad D_2\cong V_4,\qquad D_3\cong S_3.
\]
The suffixes in the subgroup abbreviations distinguish nonconjugate
embeddings of the same abstract group in \(G_1\):
\[
p=\text{product with the external } \bz_2,\qquad
z=\text{signed or diagonal reflection type},
\]
\[
m=\text{mixed embedding},\qquad
d=\text{diagonal embedding},\qquad
hd=\text{hybrid-diagonal embedding}.
\]
These suffixes are labels for embeddings and do not change the abstract
isomorphism type.

For a subgroup \(H\leq G_1\), GAP writes \(H^{G_1}\) for its conjugacy
class. The command \texttt{ConjugacyClassesSubgroups(G1)} returns the following \(33\) conjugacy classes of subgroups in Table~\ref{tab:S4Z2-subgroups}.

\small
\begin{longtable}{c c p{7.2cm} c p{3.2cm}}
\caption{Conjugacy-class representatives of subgroups of
\(S_4\times\bz_2\).}
\label{tab:S4Z2-subgroups}\\
\toprule
No. & Label & Representative & Order & Abstract type\\
\midrule
\endfirsthead
\toprule
No. & Label & Representative & Order & Abstract type\\
\midrule
\endhead
1 & $\bz_1$ & $\langle e\rangle$ & $1$ & $Z\bz_1$ \\
2 & $\bz_2$ & $\left\langle (1\,3)(2\,4)\right\rangle$ & $2$ & $\bz_2$ \\
3 & $D_1^z$ & $\left\langle (3\,4)(5\,6)\right\rangle$ & $2$ & $\bz_2$ \\
4 & $D_1$ & $\left\langle (3\,4)\right\rangle$ & $2$ & $\bz_2$ \\
5 & $\bz_2^m$ & $\left\langle (1\,3)(2\,4)(5\,6)\right\rangle$ & $2$ & $\bz_2$ \\
6 & $\bz_1^p$ & $\left\langle (5\,6)\right\rangle$ & $2$ & $\bz_2$ \\
7 & $\bz_3$ & $\left\langle (2\,4\,3)\right\rangle$ & $3$ & $\bz_3$ \\
8 & $\bz_2^p$ & $\left\langle (5\,6),\, (1\,3)(2\,4)\right\rangle$ & $4$ & $\bz_2\times \bz_2$ \\
9 & $D_2^z$ & $\left\langle (1\,4)(2\,3)(5\,6),\, (1\,3)(2\,4)\right\rangle$ & $4$ & $\bz_2\times \bz_2$ \\
10 & $D_2$ & $\left\langle (3\,4),\, (1\,2)(3\,4)\right\rangle$ & $4$ & $D_2$ \\
11 & $\bz_4$ & $\left\langle (1\,3\,2\,4),\, (1\,2)(3\,4)\right\rangle$ & $4$ & $\bz_4$ \\
12 & $V_4$ & $\left\langle (1\,4)(2\,3),\, (1\,3)(2\,4)\right\rangle$ & $4$ & $V_4$ \\
13 & $D_2^d$ & $\left\langle (3\,4)(5\,6),\, (1\,2)(3\,4)\right\rangle$ & $4$ & $D_2$ \\
14 & $\bz_4^d$ & $\left\langle (1\,3\,2\,4)(5\,6),\, (1\,2)(3\,4)\right\rangle$ & $4$ & $\bz_4$ \\
15 & $D_2^m$ & $\left\langle (1\,2)(3\,4)(5\,6),\, (3\,4)\right\rangle$ & $4$ & $D_2$ \\
16 & $D_1^p$ & $\left\langle (5\,6),\, (3\,4)\right\rangle$ & $4$ & $D_1\times \bz_2$ \\
17 & $\bz_3^p$ & $\left\langle (5\,6),\, (2\,4\,3)\right\rangle$ & $6$ & $\bz_3\times \bz_2\cong \bz_6$ \\
18 & $D_3$ & $\left\langle (3\,4),\, (2\,4\,3)\right\rangle$ & $6$ & $D_3$ \\
19 & $D_3^z$ & $\left\langle (3\,4)(5\,6),\, (2\,4\,3)\right\rangle$ & $6$ & $D_3$ \\
20 & $V_4^p$ & $\left\langle (1\,4)(2\,3),\, (1\,3)(2\,4),\, (5\,6)\right\rangle$ & $8$ & $V_4\times \bz_2$ \\
21 & $D_4^z$ & $\left\langle (1\,3)(2\,4)(5\,6),\, (3\,4),\, (1\,2)(3\,4)\right\rangle$ & $8$ & $D_4$ \\
22 & $\bz_4^p$ & $\left\langle (5\,6),\, (1\,3\,2\,4),\, (1\,2)(3\,4)\right\rangle$ & $8$ & $\bz_4\times \bz_2$ \\
23 & $D_4$ & $\left\langle (1\,4)(2\,3),\, (1\,3)(2\,4),\, (3\,4)\right\rangle$ & $8$ & $D_4$ \\
24 & $D_2^p$ & $\left\langle (5\,6),\, (3\,4),\, (1\,2)(3\,4)\right\rangle$ & $8$ & $D_2\times \bz_2$ \\
25 & $D_4^m$ & $\left\langle (1\,3)(2\,4)(5\,6),\, (1\,3\,2\,4),\, (1\,2)(3\,4)\right\rangle$ & $8$ & $D_4$ \\
26 & $D_4^{hd}$ & $\left\langle (1\,4)(2\,3),\, (1\,3)(2\,4),\, (3\,4)(5\,6)\right\rangle$ & $8$ & $D_4$ \\
27 & $D_3^p$ & $\left\langle (5\,6),\, (3\,4),\, (2\,4\,3)\right\rangle$ & $12$ & $D_3\times \bz_2$ \\
28 & $A_4$ & $\left\langle (1\,4)(2\,3),\, (1\,3)(2\,4),\, (2\,4\,3)\right\rangle$ & $12$ & $A_4$ \\
29 & $D_4^p$ & $\left\langle (1\,4)(2\,3),\, (1\,3)(2\,4),\, (5\,6),\, (3\,4)\right\rangle$ & $16$ & $D_4\times \bz_2$ \\
30 & $S_4$ & $\left\langle (1\,4)(2\,3),\, (1\,3)(2\,4),\, (2\,4\,3),\, (3\,4)\right\rangle$ & $24$ & $S_4$ \\
31 & $A_4^p$ & $\left\langle (1\,4)(2\,3),\, (1\,3)(2\,4),\, (2\,4\,3),\, (5\,6)\right\rangle$ & $24$ & $A_4\times \bz_2$ \\
32 & $S_4^m$ & $\left\langle (1\,4)(2\,3),\, (1\,3)(2\,4),\, (2\,4\,3),\, (3\,4)(5\,6)\right\rangle$ & $24$ & $S_4$ \\
33 & $S_4^p$ & $\left\langle (1\,4)(2\,3),\, (1\,3)(2\,4),\, (2\,4\,3),\, (5\,6),\, (3\,4)\right\rangle$ & $48$ & $S_4\times \bz_2$ \\
\bottomrule
\end{longtable}
\normalsize

\section{Computation of the Local Riesz Bifurcation Invariant}
The local equivariant bifurcation invariant at
\((\lambda_{1,2}^{*},0)\) is

\[
\begin{aligned}
\omega_R[\lambda_{1,2}^{*},0]
={}&
\eqdeg{G_R}\!\left(
D_x\Psi_R(\lambda_{1,2}^{+},0),B_\zeta(X)
\right)
-
\eqdeg{G_R}\!\left(
D_x\Psi_R(\lambda_{1,2}^{-},0),B_\zeta(X)
\right)
\\[1mm]
={}&
3\left(\amal{\bz_1^p}{\bz_1}{}{\bz_1}{D_1}\right)
-\left(\amal{\bz_2^m}{\bz_1}{}{\bz_1}{D_1}\right)
+3\left(\amal{\bz_2}{\bz_1}{}{\bz_1}{D_1}\right)
\\
&\quad
-\left(\bz_1\times D_1\right)
-\left(\amal{D_1^p}{\bz_1}{}{\bz_1}{D_2}\right)
-3\left(\amal{D_1^p}{D_1}{}{\bz_1}{D_1}\right)
\\
&\quad
-\left(\amal{D_1^p}{\bz_1}{}{\bz_1}{D_2}\right)
+\left(\amal{D_2^m}{\bz_1}{}{\bz_1}{D_2}\right)
-\left(\amal{D_2^d}{\bz_1}{}{\bz_1}{D_2}\right)
\\
&\quad
-\left(\amal{D_2}{\bz_1}{}{\bz_1}{D_2}\right)
-2\left(\amal{D_2}{D_1}{}{\bz_1}{D_1}\right)
-\left(\amal{\bz_2^p}{\bz_1}{}{\bz_1}{D_2}\right)
\\
&\quad
-2\left(\amal{\bz_2^p}{\bz_2^m}{}{\bz_1}{D_1}\right)
-\left(\amal{D_2^d}{\bz_1}{}{\bz_1}{D_2}\right)
-\left(\amal{\bz_2^p}{\bz_1}{}{\bz_1}{D_2}\right)
\\
&\quad
+\left(\amal{\bz_2^m}{\bz_1}{}{D_1}{D_2}\right)
+2\left(\bz_2^m\times D_1\right)
-\left(D_1\times D_1\right)
\\
&\quad
+\left(\amal{D_1^z}{\bz_1}{}{D_1}{D_2}\right)
+\left(\amal{D_3}{\bz_1}{}{\bz_1}{D_3}\right)
+\left(\amal{D_3^z}{\bz_1}{}{\bz_1}{D_3}\right)
\\
&\quad
+\left(\amal{D_2^p}{D_1}{}{\bz_1}{D_2}\right)
+\left(\amal{D_2^p}{\bz_2^m}{}{\bz_1}{D_2}\right)
+\left(\amal{D_2^p}{D_2^m}{}{\bz_1}{D_1}\right)
\\
&\quad
-\left(\amal{D_4}{\bz_1}{}{\bz_1}{D_4}\right)
+\left(\amal{D_4^m}{\bz_1}{}{\bz_1}{D_4}\right)
+\left(\amal{D_2^p}{D_1}{}{\bz_1}{D_2}\right)
\\
&\quad
+\left(\amal{V_4^p}{\bz_2^m}{}{\bz_1}{D_2}\right)
-\left(\amal{D_4}{D_2}{}{\bz_1}{D_1}\right)
-\left(\amal{D_2^m}{D_1}{}{D_1}{D_2}\right)
\\
&\quad
-\left(\amal{D_2^m}{\bz_2^m}{}{D_1}{D_2}\right)
-\left(D_2^m\times D_1 \right)
+\left(\amal{D_2}{D_1}{}{D_1}{D_2}\right)
\\
&\quad
-\left(\amal{D_2^z}{\bz_2^m}{}{D_1}{D_2}\right)
+\left(D_2\times D_1\right)
+\left(\amal{D_3^p}{D_3}{}{\bz_1}{D_1}\right)
\\
&\quad
+\left(D_3\times D_1\right)
+\left(\amal{D_4^p}{D_4^z}{}{\bz_1}{D_1}\right)
-\left(D_4^z\times D_1\right)\\
&-
2\left(\amal{\bz_2^m}{\bz_1}{}{\bz_2}{D_2}\right)
+\left(\amal{D_1}{\bz_1}{}{\bz_2}{D_2}\right)
+\left(\amal{D_1^z}{\bz_1}{}{\bz_2}{D_2}\right)
\\
&\quad
+\left(\amal{\bz_2}{\bz_1}{}{\bz_2}{D_2}\right)
+2\left(\bz_1\times D_2\right)
-\left(\amal{D_2^m}{\bz_2^m}{}{\bz_2}{D_2}\right)
\\
&\quad
-\left(\amal{D_2^m}{D_1}{}{\bz_2}{D_2}\right)
-\left(\amal{D_2^m}{\bz_1}{}{\bz_2}{D_4}\right)
-\left(\amal{D_2^z}{\bz_2^m}{}{\bz_2}{D_2}\right)
\\
&\quad
-\left(\amal{\bz_2^p}{\bz_1}{}{\bz_2}{D_4}\right)
-\left(\amal{V_4}{\bz_1}{}{\bz_2}{D_4}\right)
-\left(\amal{D_2}{\bz_1}{}{\bz_2}{D_4}\right)
\\
&\quad
-\left(\amal{D_2^z}{\bz_1}{}{\bz_2}{D_4}\right)
-\left(\amal{\bz_1^p}{\bz_1}{}{D_2}{D_4}\right)
-\left(\amal{\bz_2^m}{\bz_1}{}{D_2}{D_4}\right)
\\
&\quad
-2\left(\bz_2^m\times D_2\right)
-\left(D_1\times D_2\right)
-\left(\amal{D_1^z}{\bz_1}{}{D_2}{D_4}\right)
\\
&\quad
-\left(\amal{\bz_2}{\bz_1}{}{D_2}{D_4}\right)
+\left(\amal{D_2^p}{D_1}{}{\bz_2}{D_4}\right)
+\left(\amal{D_4}{\bz_1}{}{\bz_2}{D_8}\right)
\\
&\quad
+\left(\amal{D_4^z}{\bz_1}{}{\bz_2}{D_8}\right)
+\left(\amal{D_2^p}{\bz_2^m}{}{\bz_2}{D_4}\right)
+\left(\amal{V_4^p}{\bz_2^m}{}{\bz_2}{D_4}\right)
\\
&\quad
+2\left(\amal{D_1^p}{D_1}{}{D_2}{D_4}\right)
+\left(\amal{D_2^m}{D_1}{}{D_2}{D_4}\right)
+\left(\amal{D_2^m}{\bz_2^m}{}{D_2}{D_4}\right)
\\
&\quad
+\left(D_2^m\times D_2\right)
+\left(\amal{D_2^z}{\bz_2^m}{}{D_2}{D_4}\right)
+\left(\amal{\bz_2^p}{\bz_2^m}{}{D_2}{D_4}\right)
\\
&\quad
-\left(D_2\times D_2\right)
-\left(\amal{D_3^p}{\bz_1}{}{\bz_2}{D_{12}}\right)
-\left(\amal{D_4^p}{\bz_2^m}{}{\bz_2}{D_8}\right)
\\
&\quad
-\left(\amal{D_2^p}{D_2^m}{}{D_2}{D_4}\right)
+\left(\amal{D_4}{D_2}{}{D_2}{D_4}\right)
+\left(D_4^z\times D_2\right)
\\
&\quad
-\left(\amal{D_3^p}{D_3}{}{D_2}{D_4}\right)
-\left(\amal{D_4^p}{D_4^z}{}{D_2}{D_4}\right).
\end{aligned}
\]


\begin{thebibliography}{999}

\bibitem{AED}
Z.~Balanov, W.~Krawcewicz, and H.~Steinlein,
\textit{Applied Equivariant Degree},
AIMS Series on Differential Equations \& Dynamical Systems, Vol.~1, American Institute of Mathematical Sciences, 2006.

\bibitem{SURVEY}
Z.~Balanov, W.~Krawcewicz, S.~Rybicki, and H.~Steinlein,
\textit{A short treatise on the equivariant degree theory and its applications},
\emph{J. Fixed Point Theory Appl.}
\textbf{8} (2010), no.~1, 1--74.
\href{https://doi.org/10.1007/s11784-010-0033-9}
{doi:10.1007/s11784-010-0033-9}


\bibitem{Dab1}
M.~Dabkowski, W.~Krawcewicz, Y.~Lv, and H.-P.~Wu,
\textit{Multiple periodic solutions for \(\Gamma\)-symmetric Newtonian systems},
\emph{J. Differential Equations}
\textbf{263} (2017), no.~10, 6684--6730.
\href{https://doi.org/10.1016/j.jde.2017.07.027}
{doi:10.1016/j.jde.2017.07.027}

\bibitem{SY2}
C.~García-Azpeitia, W.~Krawcewicz, S.~Yu, and H.-P.~Wu,
\textit{Subharmonic solutions in reversible difference equations},
\emph{J. Nonlinear Convex Anal.}
\textbf{24} (2023), no.~3, 641--667.
\href{https://yokohamapublishers.jp/online2/opjnca/vol24/p641.html}
{https://yokohamapublishers.jp/online2/opjnca/vol24/p641.html}

\bibitem{Rabinowitz1971}
P.~H.~Rabinowitz,
\textit{Some global results for nonlinear eigenvalue problems},
\emph{J. Funct. Anal.}
\textbf{7} (1971), 487--513.
\href{https://doi.org/10.1016/0022-1236(71)90030-9}
{doi:10.1016/0022-1236(71)90030-9}

\bibitem{SY1}
W.~Krawcewicz, H.-P.~Wu, and S.~Yu,
\textit{Periodic solutions in reversible second-order autonomous systems
with symmetries},
\emph{J. Nonlinear Convex Anal.}
\textbf{18} (2017), no.~8, 1393--1419.
\href{https://www.yokohamapublishers.jp/online2/opjnca/vol18/1393.html}
{https://www.yokohamapublishers.jp/online2/opjnca/vol18/1393.html}

\bibitem{Kur}
K.~Kuratowski,
\textit{Topology},
Vol.~II, Academic Press, New York--London;
PWN--Polish Scientific Publishers, Warsaw, 1968.

\bibitem{Yu3}
S.~Yu,
\textit{Periodic solutions to a ring of identical cells with delay via the
equivariant degree method},
\emph{Int. J. Differ. Equations}
\textbf{2026} (2026), Article ID 2137372.
\href{https://doi.org/10.1155/ijde/2137372}
{doi:10.1155/ijde/2137372}

\bibitem{Podlubny1999}
I.~Podlubny,
\textit{Fractional Differential Equations:
An Introduction to Fractional Derivatives, Fractional Differential Equations,
to Methods of Their Solution and Some of Their Applications},
Mathematics in Science and Engineering, Vol.~198,
Academic Press, San Diego, 1999.

\bibitem{Diethelm2010}
K.~Diethelm,
\textit{The Analysis of Fractional Differential Equations:
An Application-Oriented Exposition Using Differential Operators of Caputo Type},
Lecture Notes in Mathematics, Vol.~2004,
Springer, Berlin--Heidelberg, 2010.
\href{https://doi.org/10.1007/978-3-642-14574-2}
{doi:10.1007/978-3-642-14574-2}

\bibitem{Ferrari2018}
F.~Ferrari,
\textit{Weyl and Marchaud derivatives: A forgotten history},
\emph{Mathematics}
\textbf{6} (2018), no.~1, Article~6.
\href{https://doi.org/10.3390/math6010006}
{doi:10.3390/math6010006}

\bibitem{RoncalStinga2014}
L.~Roncal and P.~R.~Stinga,
\textit{Fractional Laplacian on the torus},
\emph{Commun. Contemp. Math.}
\textbf{18} (2016), no.~3, 1550033.
\href{https://doi.org/10.1142/S0219199715500339}
{doi:10.1142/S0219199715500339}

\bibitem{Sampedro2025}
J.~C.~Sampedro,
\textit{Periodic solutions to nonlocal pseudo-differential equations:
A bifurcation theoretical perspective},
\emph{Nonlinear Anal.}
\textbf{254} (2025), 113746.
\href{https://doi.org/10.1016/j.na.2025.113746}
{doi:10.1016/j.na.2025.113746}

\bibitem{Charkaoui2025}
A.~Charkaoui,
\textit{Temporal periodic solutions for fractional evolution equations
with nonlinear source terms},
\emph{Evol. Equ. Control Theory}
\textbf{14} (2025), no.~6, 1638--1659.
\href{https://doi.org/10.3934/eect.2025047}
{doi:10.3934/eect.2025047}

\bibitem{Haacker2026}
P.-E.~Haacker, R.~I.~Leine, R.~Chaudhary, K.~Diethelm,
A.~Schmidt, and S.~Hashemishahraki,
\textit{Hill-type stability analysis of periodic solutions of
fractional-order differential equations},
\emph{Nonlinear Dyn.}
\textbf{114} (2026), Article~334.
\href{https://doi.org/10.1007/s11071-025-12196-8}
{doi:10.1007/s11071-025-12196-8}

\bibitem{GarciaGhanemKrawcewicz2025}
C.~García-Azpeitia, Z.~Ghanem, and W.~Krawcewicz,
\textit{Global bifurcation in symmetric systems of nonlinear wave equations},
\emph{J. Differential Equations}
\textbf{446} (2025), 113600.
\href{https://doi.org/10.1016/j.jde.2025.113600}
{doi:10.1016/j.jde.2025.113600}

\bibitem{ChenCraneHensley2025}
C.~Chen, C.~Crane, and T.~Hensley,
\textit{Global Hopf bifurcation in symmetric configurations of distributed
delay differential equations},
\emph{Commun. Pure Appl. Anal.} (2025).
\href{https://doi.org/10.3934/cpaa.2025118}
{doi:10.3934/cpaa.2025118}

\bibitem{Izydorek2025}
M.~Izydorek, J.~Janczewska, M.~Starostka, and N.~Waterstraat,
\textit{The equivariant spectral flow and bifurcation for functionals
with symmetries: Part I},
\emph{Math. Ann.}
\textbf{393} (2025), 2187--2226.
\href{https://doi.org/10.1007/s00208-025-03291-7}
{doi:10.1007/s00208-025-03291-7}

\bibitem{Yu2026Random}
S.~Yu,
\textit{Equivariant degree approach to random periodic solutions in second-order stochastic systems with symmetry},
\emph{AIMS Math.}
\textbf{11} (2026), no.~7, 22623--22657.
\href{https://doi.org/10.3934/math.2026914}
{doi:10.3934/math.2026914}

\bibitem{Garcia2019Vortex}
C.~García-Azpeitia,
\textit{Relative periodic solutions of the (n)-vortex problem on the sphere},
\emph{J. Geom. Mech.}
\textbf{11} (2019), no.~3, 427--438.
\href{https://doi.org/10.3934/jgm.2019021}
{doi:10.3934/jgm.2019021}

\bibitem{GarciaBerezovik2019}
I.~Berezovik, C.~García-Azpeitia, and W.~Krawcewicz,
\textit{Symmetries of nonlinear vibrations in tetrahedral molecular configurations},
\emph{Discrete Contin. Dyn. Syst. Ser. B}
\textbf{24} (2019), no.~6, 2473--2491.
\href{https://doi.org/10.3934/dcdsb.2018261}
{doi:10.3934/dcdsb.2018261}

\bibitem{Rybicki1994}
S.~Rybicki,
\textit{A degree for \(S^1\)-equivariant orthogonal maps and its applications
to bifurcation theory},
\emph{Nonlinear Anal.}
\textbf{23} (1994), no.~1, 83--102.
\href{https://doi.org/10.1016/0362-546X(94)90253-4}
{doi:10.1016/0362-546X(94)90253-4}

\bibitem{Golebiewska2018}
A.~Go{\l}\k{e}biewska
\textit{Periodic solutions of asymptotically linear autonomous Hamiltonian
systems with resonance},
\emph{J. Dyn. Differential Equations}
\textbf{30} (2018), 1509--1524.
\href{https://doi.org/10.1007/s10884-017-9608-0}
{doi:10.1007/s10884-017-9608-0}

\bibitem{GolebiewskaRybickiStefaniak2021}
A.~Gołębiewska, S.~Rybicki, and P.~Stefaniak,
\textit{Connected sets of solutions of symmetric elliptic systems},
\emph{Nonlinear Anal.}
\textbf{202} (2021), 112124.
\href{https://doi.org/10.1016/j.na.2020.112124}
{doi:10.1016/j.na.2020.112124}

\bibitem{MarzantowiczPrieto2004}
W.~Marzantowicz and C.~Prieto,
\textit{The unstable equivariant fixed point index and the equivariant degree},
\emph{J. Lond. Math. Soc.}
\textbf{69} (2004), no.~1, 214--230.
\href{https://doi.org/10.1112/S0024610703004721}
{doi:10.1112/S0024610703004721}

            \bibitem{Duan2024}
            Y. Duan, C. Crane, W. Krawcewicz, H. Xiao, {\it Periodic solutions in reversible symmetric second order systems with multiple distributed delays}, Journal of Differential Equations, Volume
401, 282-307, 2024.

\end{thebibliography}
\end{document}